%%%%%%%%%%%%%%%%%%%%%%%%%%%%%%%%%%%%%%%%%%%%%%%%%%%%%%%%%%%%%%%%%%
%%                                                               %
%% Similarity in varieties with a weak difference term           %
%% Version 4, draft 2 (final version)                            %
%% July 2026                                                     %
%%
%% CHANGES from version 2 (submitted to IJAC):
%%   (*) the Claim within the proof of Lemma 6.3(2) is stated
%%       as Lemma 3.10 (in slightly more general form).
%%   (*) Numbering of the last 5 results in section 3 incremented
%%   (*) The last result of section 7 (Proposition 7.9) is demoted
%%       to a comment before Definition 7.3.
%%   (*) "annihilator" reverted to "centralizer"
%%
%% CHANGES from version 3:
%%   (*) Clarified a remark before Definition 7.3.
%%   (*) Check the proof of Lemma 3.3 -- I think it is wrong.
%%   (*) Hypothesis of Theorem 6.10(1) relaxed
%%   (*) New clauses added to Lemma 7.6
%%   (*) Three corrections given by the referee
%%                                                               %
%%%%%%%%%%%%%%%%%%%%%%%%%%%%%%%%%%%%%%%%%%%%%%%%%%%%%%%%%%%%%%%%%%
\documentclass[12pt]{amsart}

\usepackage{amsfonts,amsmath,amsthm,amssymb}
\usepackage[mathscr]{eucal}

\usepackage{tikz}

\usepackage{enumitem}  % Used for (B1)-(B5)
\setlist[enumerate]{font=\normalfont}

\usepackage{hyperref}

%% Formatting

\setlength{\textwidth}{36true pc}
\setlength{\headheight}{8true pt} % needed for 12pt documents
\setlength{\oddsidemargin}{15 truept}
\setlength{\evensidemargin}{15 truept}
\setlength{\textheight}{572true pt}

%% The next command fixes todonots on even-sided pages 
%\evensidemargin=\dimexpr\evensidemargin + 1cm\relax

%% Environments

\newtheorem{appendixoneprp}{Proposition}

\setcounter{appendixonecounter}{1}

\newtheorem{appendixtwothm}{Theorem}

\newtheorem{appendixtwolm}{Lemma}

\theoremstyle{definition}
\newtheorem{appendixtwodf}{Definition}
\newtheorem{appendixtwoexmp}{Example}
\newtheorem{appendixtwoclm}{Claim}

\theoremstyle{remark}

\setcounter{appendixtwocounter}{1}

%%%%%%%%%%%%
\theoremstyle{plain}

\newtheorem{thm}{Theorem}[section]
\newtheorem{cor}[thm]{Corollary}
\newtheorem{lm}[thm]{Lemma}
\newtheorem{prp}[thm]{Proposition}

\theoremstyle{definition}
\newtheorem{df}[thm]{Definition}

\newtheorem{prb}{Problem}

\newtheorem{assum}[thm]{Assumption}

\theoremstyle{remark}
\newtheorem{rem}[thm]{Remark}

\theoremstyle{plain}

\theoremstyle{definition}

\numberwithin{equation}{section}

%%%%%%%% Macros %%%%%%%%%%%%%%%%%%%%%%%%%

%% From Emil:

  % notation for vectors in algebras
     % a\wrel{\alpha}b = a \alpha b
 % <...>

\newcommand{\set}[2]{\{#1\,:\,\text{#2}\}} % { x : ... }
\newcommand{\m}[1]{{\mathbf{\uppercase{#1}}}}
\DeclareMathOperator{\cg}{Cg}
\DeclareMathOperator{\sg}{Sg}
\DeclareMathOperator{\Con}{Con}

\DeclareMathOperator{\Pol}{Pol}

%%%%%%%%%%%%%

%% Ross's macros

\newcommand{\ra}{\rightarrow}
\newcommand{\tup}[1]{\mathbf{#1}}             % tuple
\newcommand{\calV}{\mathcal V}                % variety V
\newcommand{\calR}{\mathcal R}

\newcommand{\proj}{\operatorname{pr}}

\newcommand{\Do}{D^o}
\newcommand{\Aut}{\operatorname{Aut}}
\newcommand{\stackequiv}[1]{\stackrel{#1}{\equiv}}
\newcommand{\tr}{\operatorname{tr}}

\newcommand{\HSP}{\mathsf{HSP}}
\newcommand{\HS}{\mathsf{HS}}
\newcommand{\Deltah}{\Delta_{\mathsf{h}}}

\newcommand{\Grp}[1]{\operatorname{Grp}_{\m #1}}

\newcommand{\Grpd}[1]{\operatorname{Grp}_{#1}}
\newcommand{\FGrp}[1]{{}_\bbF\hspace{-2pt}\Grp{#1}}
\newcommand{\FGrpd}[1]{{}_\bbF\hspace{-2pt}\Grpd{#1}}

\newcommand{\RGrp}{{}_{\m r}\hspace{-1pt}}

   % For scalar mult; #1=r, #2=e
\newcommand{\embed}[1]{\chi_{#1}}
\newcommand{\ran}{\operatorname{ran}}

\newcommand{\typ}{\operatorname{typ}}

\newcommand{\baralpha}{\overline{\alpha}}
\newcommand{\barmu}{\overline{\mu}}

\newcommand{\bartheta}{\overline{\theta}}

\newcommand{\barvarphi}{\overline{\varphi}}
\newcommand{\barvarepsilon}{\overline{\varepsilon}}

\newcommand{\barC}{\overline{C}}

\newcommand{\coverdelta}{\delta^+}

\newcommand{\coveralpha}{\alpha^+}

    % basic tolerance of \rho
    % basic tolerance of \rho

\newcommand{\Dmon}{\varphi}     % for \baralpha/\Delta in Con(D(\theta))

\newcommand{\montwo}{\kappa} % 2nd minimal abelian congruence in def of bridge

\newcommand{\TD}[1]{T_{\m #1}^{D}}

   % Adjective for ``canonical" Zhuk bridges

\newcommand{\bbF}{\mathbb F}
\newcommand{\bbD}{\mathbb D}

\newcommand{\Vn}[2]{V_{(#2,#1)}}

\newcommand{\Wn}[2]{W_{(#2,#1)}}

\newcommand{\sigman}[2]{\sigma_{(#2,#1)}}

\newcommand{\Fn}[3]{F_{(#2,#1)}^{#3}}

\newcommand{\zeron}[1]{0_{#1}}
\newcommand{\zeroiell}[2]{0_{(#2,#1)}}
\newcommand{\zerobar}{\overline{0}}
\newcommand{\scrF}{\mathscr{F}}
\newcommand{\scrG}{\mathscr{G}}
\newcommand{\scrH}{\mathscr{H}}
\newcommand{\scrK}{\mathscr{K}}

\newcommand{\sfd}{\mathsf{d}}
\newcommand{\sff}{\mathsf{f}}

\newcommand{\nonempty}{non-empty}
\newcommand{\Implies}{\,$\Rightarrow$\,}
\newcommand{\parenImplies}{($\Rightarrow$)}
\newcommand{\Iff}{\,$\Leftrightarrow$\,}
\newcommand{\parenImpliedby}{($\Leftarrow$)}

\newcommand{\barma}{\overline{\m a}}

\newcommand{\barf}{\overline{f}}

\newcommand{\sfL}{\mathsf{L}}
\newcommand{\End}{\operatorname{End}}
\newcommand{\Hom}{\operatorname{Hom}}

\newcommand{\restrict}[1]{{\upharpoonright}_{#1}}

%% Nov 2025
\newcommand{\thcl}{C}  % theta class or mu class
\newcommand{\alcl}{E}  % alpha class

\newcommand{\Bmax}{\thcl_{\mathsf{max}}}
\newcommand{\barthcl}{\overline{\thcl}}

\newcommand{\Cn}[1]{\alcl_{#1}}
\newcommand{\bEnd}{\mathbf{End}}

\newcommand{\proper}{similarity}

%%%%%%%%%%%%%%%%%%%%%%%%%%%%%%%%%%%%%%%%%%%%

\begin{document}
\bibliographystyle{siam}

\title[Abelian congruences and similarity]{Abelian congruences and similarity in
varieties with a weak difference term}

\author{Ross Willard}
\address{Pure Mathematics Department, University of Waterloo, Waterloo,
ON N2L 3G1 Canada}
\email{ross.willard@uwaterloo.ca}

\urladdr{www.math.uwaterloo.ca/$\sim$rdwillar}
\thanks{The support of the Natural Sciences and Engineering Research Council (NSERC) of Canada is gratefully acknowledged.}
\keywords{universal algebra, abelian congruence, weak difference term, similarity}

\subjclass[2010]{08A05, 08B99 (Primary).}

\date{July 24, 2026}

\begin{abstract}
This is the first of three papers motivated by the author's desire to 
understand and explain ``algebraically" 
one aspect of Dmitriy Zhuk's proof of the CSP Dichotomy Theorem. 
In this paper we study abelian congruences in varieties having a weak
difference term.  Each class of the congruence supports an abelian group structure; if the congruence is minimal, each class supports
the structure of a vector space over a division ring determined by the
congruence.  A construction due to 
J. Hagemann, C. Herrmann and R. Freese in the congruence modular setting
extends to varieties with a weak difference term, and provides a 
``universal domain" for the abelian groups or vector spaces that 
arise from the classes of the congruence within a single class of the 
centralizer of the congruence.  
The construction also supports an extension of Freese's
similarity relation (between subdirectly irreducible algebras)
from the congruence modular setting to varieties with
a weak difference term.  
\end{abstract}

\maketitle

\section{Introduction} \label{sec:intro}

Arguably the most important result in universal algebra in the last ten years is the
positive resolution of the Constraint Satisfaction Problem (CSP) Dichotomy Conjecture,
announced independently in 2017 by Andrei Bulatov \cite{bulatov2017} and
Dmitriy Zhuk \cite{zhuk2017,zhuk2020}.
One particular feature of Zhuk's proof is his analysis of 
multi-sorted
``rectangular critical" relations $R\leq_{sd} \m a_1\times \cdots \times \m a_n$ of
finite algebras $\m a_1,\ldots,\m a_n$ in certain locally finite idempotent Taylor varieties.  
Zhuk showed that, in the varieties he considers, 
such relations produce derived relations, 
which he called ``bridges," 
between certain meet-irreducible congruences $\rho_i \in \Con(\m a_i)$ 
determined by $R$.
Zhuk established a number of useful properties of his bridges and used them to 
tease 
out implicit linear equations in the CSP instances he considered.

Strikingly, Zhuk's definition of ``bridge" formally resembles some aspects of 
Hagemann and Herrmann's formulation \cite{hagemann-herrmann} from 1979 of the centrality relation
on congruences of a single algebra.
Moreover, the ``there-exists-a-bridge" relation, under suitable restrictions,
has some resemblance to the
``similarity" relation of Freese \cite{freese1983} from 1983; this latter relation is between
subdirectly irreducible algebras in a congruence modular variety, and can be viewed as an 
amplification of a certain correspondence involving the centrality relation.
As well, Zhuk's bridges arising in rectangular critical subdirect products in 
locally finite idempotent Taylor varieties
resemble the ``graphs of similarity" which Kearnes and Szendrei 
proved arise in 
rectangular critical relations of the form $R\leq \m a^n$ 
in congruence modular varieties
\cite[Theorem 2.5(6)]{kearnes-szendrei-parallel}. 

Our main goal in this paper and two companion papers \cite{bridges,critical}
is to establish precise connections between Zhuk's bridges,
centrality, and similarity, and then to apply these connections to the
study of rectangular critical relations among finite algebras in arbitrary
locally finite Taylor varieties.
In order to connect Zhuk's bridges to similarity, we need to
first generalize the  similarity relation from its original setting (congruence modular varieties)
to a broader class also containing all locally finite idempotent Taylor varieties.  
For this purpose, we focus on the class of varieties 
which have a \emph{weak difference term}.
Roughly speaking,
a ``weak difference term" is an idempotent ternary term that specializes to a Maltsev
operation whenever it is restricted to a block of an abelian congruence.
Such terms are weak versions of terms, known as ``difference terms," that exist
and play an important role in congruence modular varieties \cite{gumm-easyway}.
Weak difference terms arose organically in tame congruence theory in the 1980s, when
Hobby and McKenzie \cite{tct} showed that every locally finite Taylor has 
such a term.  
The property of a (general) variety having a weak difference term
was first explicitly identified in the 1990s in work of Lipparini \cite{lipparini1996},
Kearnes \cite{kearnes1995},
and Kearnes and Szendrei \cite{kearnes-szendrei},
who explored the extent to which centralizer properties in congruence modular varieties extend to
varieties with a weak difference term.
Recent work concerning varieties with a weak difference term is represented by
\cite{kearnes-kiss,wires,simpler,kearnes2023}.

This paper is organized around a construction, due to Freese \cite{freese1983}
(building on Hagemann and Herrmann \cite{hagemann-herrmann}) in the congruence
modular setting, which we call the \emph{difference algebra} of an abelian
congruence.   
Whenever $\theta$ is an abelian congruence of an
algebra in a variety with a weak difference term, 
every $\theta$-class naturally supports the structure of an abelian group.
Letting $(0:\theta)$ denote the largest congruence centralizing $\theta$, 
it turns out that the family $\mathscr{F}$ of
abelian groups supported on the $\theta$-classes within a fixed
$(0:\theta)$-class $E$ can be coherently
superimposed on each other, forming a single abelian group $G_E$ for which 
$\mathscr{F}$ is
a directed family of subgroups whose union is $G_E$.
Roughly speaking, the difference algebra of $\theta$ replaces
all the $\theta$-classes within each $(0:\theta)$-class $E$ with 
a single class (of a new abelian congruence $\varphi$) 
supporting the group $G_E$.
We define the difference algebra and its congruence $\varphi$,
and prove their basic properties,
in Sections~\ref{sec:update}.  We make precise the connection between
the abelian groups on $\theta$-classes and $\varphi$-classes 
in Section~\ref{sec:structure}.

In Section~\ref{sec:sim}, we show that Freese's similarity relation
generalizes nicely to varieties with a weak difference term.
%It turns out that 
In the abelian monolith case, 
Freese's original definition of similarity, as well as the reformulation
in \cite{freese-mckenzie}, does not generalize in a straightforward way.
Instead, we take as our starting point two equivalent
(in the congruence modular setting) characterizations 
of similarity due to Freese -- 
one based on the difference algebra construction,
the other a congruence lattice-theoretic formulation --
and show that both characterizations generalize to, 
and are provably equivalent in, varieties with a 
weak difference term (Theorems~\ref{thm:D(A)} and~\ref{thm:persp}).
We also give a third equivalent characterization using a variant
of Zhuk's bridges, which we call ``\proper\ bridges" (Definition~\ref{df:bridge}).

Along the way,
we expend some energy explaining a folklore result: that 
each class of an abelian \emph{minimal} congruence $\theta$
(in an algebra $\m a$ with a weak difference
term) naturally supports the structure of a left vector space over a
division ring $\bbF_\theta$.  Here $\bbF_\theta$ depends on
$\m a$ and $\theta$ but not on the particular $\theta$-class.
We provide a new, simpler construction of $\bbF_\theta$ 
based on the difference algebra construction in Section~\ref{sec:divring},
and show in
Appendix~\hyperlink{A1}{1} that it agrees with the folklore construction.
We also show that if two subdirectly irreducible algebras in a
variety with a weak difference term have abelian monoliths and are similar,
then the division rings determined by their monoliths are isomorphic.

In the companion paper \cite{bridges},
we will develop Zhuk's theory of bridges 
and describe the precise relationships between
the objects in Zhuk's theory, the centrality relation, similarity, and
\proper\ bridges as defined here.  In a third paper \cite{critical}, we will 
study rectangular critical relations in locally finite Taylor varieties, 
extending the work of Zhuk~\cite{zhuk2020} and
connecting these relations to similarity (as was done by
Kearnes and Szendrei \cite{kearnes-szendrei-parallel} in the congruence
modular setting).  In particular, we will show explicitly how each such 
relations can be understood as encoding
a family of ``parallel" linear equations over a finite field.

\medskip
\noindent\textsc{Acknowledgement.}  I thank Keith Kearnes for several helpful conversations,
and Dmitriy Zhuk for providing the inspiration.

%including one which led us to a proof of an early version of Theorem~\ref{thm:D(A)}, 

\section{Definitions} \label{sec:defs}

We assume that the reader is familiar with the fundamentals of universal algebra as given
in \cite{burris-sanka}, \cite{alvin1} or \cite{bergman}.
Our notation generally follows that in \cite{alvin1}, \cite{bergman} and \cite{kearnes-kiss}.
We follow \cite{kearnes-kiss} and refer to a set of operation symbols with assigned
arities as a \emph{signature}.

If $\m a$ is an algebra and $\alpha,\beta \in \Con(\m a)$ with $\alpha\leq\beta$, then $\beta/\alpha$
denotes the congruence of $\m a/\alpha$ corresponding to $\beta$ via the
Correspondence Theorem (\cite[Theorem 4.12]{alvin1} or \cite[Theorem 3.6]{bergman}).
If in addition $\gamma,\delta \in \Con(\m a)$ with $\gamma\leq\delta$, then we write
$(\alpha,\beta)\nearrow (\gamma,\delta)$ and say that $(\alpha,\beta)$ is
\emph{perspective up to} $(\gamma,\delta)$, or \emph{transposes up to} $(\gamma,\delta)$,
if $\beta \wedge \gamma = \alpha$ and $\beta \vee \gamma = \delta$.  The notation
$(\gamma,\delta)\searrow (\alpha,\beta)$ means the same thing.
We write $\alpha\prec \beta$ and say that $\beta$ \emph{covers} $\alpha$, 
or is an \emph{upper cover of} $\alpha$, if $\alpha<\beta$ and there does not exist
a congruence $\gamma$ satisfying $\alpha<\gamma<\beta$.
A congruence $\alpha$ is \emph{minimal} if it covers $0$, and is
\emph{completely meet-irreducible} 
if $\alpha\ne 1$ and there exists $\coveralpha$
with $\alpha\prec\coveralpha$ and such that $\alpha<\beta \implies 
\coveralpha \leq \beta$ for all $\beta \in \Con(\m a)$.
A subset $T\subseteq A$ is a \emph{transversal} for a congruence $\alpha$
if it contains exactly one element from each $\alpha$-class. 

$[n]$ denotes $\{1,2,\ldots,n\}$.
If $\m a$ is an algebra, an $n$-ary term $t(x_1,\ldots,x_n)$ of $\m a$ is
\emph{idempotent} if $\m a$ satisfies the identity $t(x,\ldots,x)\approx x$, 
and is a \emph{Taylor term}
if it is idempotent, $n>1$, and for each $i \in [n]$, $\m a$ satisfies an identity
of the form 
\[
t(u_1,\ldots,u_n)\approx t(v_1,\ldots,v_n)
\]
where each $u_j$ and $v_k$ is the variable $x$ or $y$, and
$\{u_i,v_i\}=\{x,y\}$.
A particularly important example of a Taylor term is a \emph{weak near-unanimity term}
(WNU),
which is an $n$-ary idempotent term $w(x_1,\ldots,x_n)$ with $n>1$ which satisfies 
\[
w(y,x,x,\ldots,x) \approx w(x,y,x,\ldots,x) \approx w(x,x,y,\ldots,x) \approx \cdots \approx
w(x,\ldots,x,y).
\]
Another important example of a Taylor term is a \emph{Maltsev term}; this is a ternary
term $p(x,y,z)$ satisfying the identities
\begin{equation} \label{eq:maltsev}
p(x,x,y) \approx y \approx p(y,x,x).
\end{equation}
The identities \eqref{eq:maltsev} are called the \emph{Maltsev identities}.  Any 
ternary operation
(whether a term or not) satisfying them is called a \emph{Maltsev operation}.

Before defining ``weak difference term," we recall the ternary centralizer relation on
congruences and the notion of
abelian congruences.
Given a \nonempty\ set $A$, let $A^{2\times 2}$ denote the set of all
$2\times 2$ matrics over $A$.  If $\m a$ is an algebra, let 
$\m a^{2\times 2}$ denote the algebra with universe $A^{2\times 2}$ which is 
isomorphic to $\m a^4$ via the bijection
\[
\begin{pmatrix} a_1&a_3\\a_2&a_4\end{pmatrix} \mapsto (a_1,a_2,a_3,a_4).
\]

\begin{df} \label{df:M}
Suppose $\m a$ is an algebra and $\theta,\varphi \in \Con(\m a)$.
\begin{enumerate}
\item
$\m a(\theta,\varphi)$ is the subalgebra of $\m a^{2\times 2}$ consisting of 
the matrices whose columns belong to $\theta$ and rows belong to $\varphi$.
\item
$M(\theta,\varphi)$ is the subuniverse of $\m a(\theta,\varphi)$ generated by the
set
\[
X(\theta,\varphi):=\left\{ \begin{pmatrix} c&c\\d&d\end{pmatrix} : (c,d) \in \theta\right\}
\cup
\left\{ \begin{pmatrix} a&b\\a&b\end{pmatrix} : (a,b) \in \varphi\right\}.
\]
The matrices in $M(\theta,\varphi)$ are called $(\theta,\varphi)$-\emph{matrices}.
\end{enumerate}
\end{df}

\begin{df} \label{df:cent}
Suppose $\theta,\varphi,\delta \in \Con(\m a)$.  We say that 
$\varphi$ \emph{centralizes $\theta$ modulo $\delta$}, and write
$C(\varphi,\theta;\delta)$,
if any of the following equivalent conditions holds:
\begin{enumerate}
\item\label{dfcent:it1}
For every matrix in $M(\varphi,\theta)$, if one row is in $\delta$, then so is the other row.
\item \label{dfcent:it2}
For every matrix in $M(\theta,\varphi)$, if one column is in $\delta$, then so is the 
other column.
\item \label{dfcent:it3}
For every term $t(x_1,\ldots,x_m,y_1,\ldots,y_n)$ and all pairs $(a_i,b_i) \in \varphi$
($i\in [m]$) and $(c_j,d_j) \in \theta$ ($j \in [n]$),
\[
\mbox{if 
$t(\tup{a},\tup{c})\stackrel{\delta}{\equiv} t(\tup{a},\tup{d})$, then
$t(\tup{b},\tup{c})\stackrel{\delta}{\equiv} t(\tup{b},\tup{d})$.}
\]
\end{enumerate}
More generally, if $S,T$ are \emph{tolerances} of $\m a$ (i.e., reflexive symmetric
subuniverses of $\m a^2$),
then we can define $\m a(S,T)$, $X(S,T)$, $M(S,T)$, and the relation
$C(S,T;\delta)$ in the same way; see \cite[\S2.5]{kearnes-kiss}.
\end{df}

Let $\theta,\delta$ be congruences of an algebra $\m a$.  
The \emph{centralizer} (or \emph{annihilator}) 
\emph{of $\theta$ modulo $\delta$}, denoted
$(\delta:\theta)$, is the unique largest congruence
$\varphi$ for which $C(\varphi,\theta;\delta)$ holds.
We say that $\theta$ is \emph{abelian} if $C(\theta,\theta;0)$ holds; equivalently,
if $\theta \leq (0:\theta)$.  More generally, if $\theta,\delta \in 
\Con(\m a)$ with
$\delta\leq\theta$, then we say that $\theta$ is \emph{abelian modulo} $\delta$ if
any of the following equivalent conditions hold: (i) $\theta/\delta$ is an abelian
congruence of $\m a/\delta$; (ii) $C(\theta,\theta;\delta)$ holds; (iii) 
$\theta \leq
(\delta:\theta)$.  
The following lemma expresses that centralizers behave nicely under 
certain quotients and surjective
homomorphisms.

\begin{lm} \label{lm:quotient}
Suppose $\m a,\m b$ are algebras in a common signature.
\begin{enumerate}
\item \label{quotient:it1}
If $\delta',\delta,\theta \in \Con(\m a)$ with $\delta' \leq\delta\leq \theta$, then
$(\delta/\delta':\theta/\delta') = (\delta:\theta)/\delta'$.
\item \label{quotient:it2}
Suppose $f:\m a\ra\m b$ is a surjective homomorphism and 
$\lambda,\mu \in \Con(\m b)$ with $\lambda\leq\mu$.
Then $f^{-1}((\lambda:\mu)) = (f^{-1}(\lambda):f^{-1}(\mu))$.
\end{enumerate}
\end{lm}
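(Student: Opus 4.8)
The plan is to reduce both parts to the observation that the relation $C(\varphi,\theta;\delta)$ is determined, via condition~\eqref{dfcent:it3} of Definition~\ref{df:cent}, by the term operations of $\m a$ together with the three congruences, and hence is carried faithfully across canonical quotient maps and, more generally, surjective homomorphisms. Concretely, I would first isolate the following equivalence, valid whenever $\delta'\leq\delta\leq\theta$ in $\Con(\m a)$ and $\delta'\leq\varphi\in\Con(\m a)$:
\[
C(\varphi,\theta;\delta)\ \text{holds in}\ \m a
\qquad\Longleftrightarrow\qquad
C(\varphi/\delta',\,\theta/\delta';\,\delta/\delta')\ \text{holds in}\ \m a/\delta'.
\]
Call this $(\ast)$. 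Its proof is a direct comparison of the two instances of Definition~\ref{df:cent}\eqref{dfcent:it3}: the term operations of $\m a/\delta'$ are the $\delta'$-images of those of $\m a$; since $\varphi\geq\delta'$ and $\theta\geq\delta'$, a pair of $\delta'$-classes lies in $\varphi/\delta'$ (resp.\ $\theta/\delta'$) iff some pair of representatives lies in $\varphi$ (resp.\ $\theta$); and since $\delta\geq\delta'$, two elements are $\delta$-related iff their $\delta'$-classes are $(\delta/\delta')$-related. Thus the instances of the implication in \eqref{dfcent:it3} for $\m a/\delta'$ are in bijective correspondence with those for $\m a$.

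I would also note the elementary fact that $C(\varphi,\theta;\delta)$ holds whenever $\varphi\leq\delta$: then each pair $(a_i,b_i)$ in \eqref{dfcent:it3} has $a_i\stackrel{\delta}{\equiv}b_i$, so $t(\tup{a},\tup{c})\stackrel{\delta}{\equiv}t(\tup{b},\tup{c})$ and $t(\tup{a},\tup{d})\stackrel{\delta}{\equiv}t(\tup{b},\tup{d})$, making the implication automatic. In particular $\delta\leq(\delta:\theta)$ for every $\theta$.

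For part~\eqref{quotient:it1}: since $\delta'\leq\delta\leq(\delta:\theta)$, the quotient $(\delta:\theta)/\delta'$ is a bona fide congruence of $\m a/\delta'$. By $(\ast)$, the congruences of $\m a/\delta'$ that centralize $\theta/\delta'$ modulo $\delta/\delta'$ are exactly those of the form $\varphi/\delta'$ with $\delta'\leq\varphi\in\Con(\m a)$ and $C(\varphi,\theta;\delta)$. The map $\varphi\mapsto\varphi/\delta'$ is an order isomorphism from $\{\varphi\in\Con(\m a):\varphi\geq\delta'\}$ onto $\Con(\m a/\delta')$, and the largest congruence $\varphi$ of $\m a$ with $C(\varphi,\theta;\delta)$ --- namely $(\delta:\theta)$ --- already lies above $\delta'$; hence the largest member of the above family is $(\delta:\theta)/\delta'$. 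Since that largest member is by definition $(\delta/\delta':\theta/\delta')$, we obtain $(\delta/\delta':\theta/\delta')=(\delta:\theta)/\delta'$.

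For part~\eqref{quotient:it2}: put $\kappa=\ker f$ and factor $f$ as the canonical surjection $q\colon\m a\ra\m a/\kappa$ followed by an isomorphism $\bar f\colon\m a/\kappa\ra\m b$, so that $f^{-1}=q^{-1}\circ\bar f^{-1}$ on congruences. An isomorphism identifies term operations, hence preserves centralizers, so $\bar f^{-1}((\lambda:\mu))=(\bar f^{-1}(\lambda):\bar f^{-1}(\mu))$; it therefore suffices to prove $q^{-1}((\rho:\sigma))=(q^{-1}(\rho):q^{-1}(\sigma))$ for congruences $\rho\leq\sigma$ of $\m a/\kappa$. Applying part~\eqref{quotient:it1} with $\delta'=\kappa$, $\delta=q^{-1}(\rho)$, $\theta=q^{-1}(\sigma)$ (legitimate since $\kappa\leq q^{-1}(\rho)\leq q^{-1}(\sigma)$), and using that $q^{-1}$ and $\gamma\mapsto\gamma/\kappa$ are mutually inverse between $\Con(\m a/\kappa)$ and $\{\gamma\in\Con(\m a):\gamma\geq\kappa\}$ (so $q^{-1}(\rho)/\kappa=\rho$ and $q^{-1}(\sigma)/\kappa=\sigma$), we get $(\rho:\sigma)=(q^{-1}(\rho):q^{-1}(\sigma))/\kappa$; applying $q^{-1}$ --- valid since $(q^{-1}(\rho):q^{-1}(\sigma))\geq q^{-1}(\rho)\geq\kappa$ --- gives the claim. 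I do not anticipate a genuine obstacle: this is a bookkeeping lemma whose only points requiring care are the equivalence $(\ast)$ and the observation that $\delta'$ (respectively $\kappa$) automatically lies below the relevant centralizer, so that the quotients appearing on the right-hand sides are meaningful.
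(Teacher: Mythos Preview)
Your proof is correct and follows essentially the same approach as the paper: for part~\eqref{quotient:it1} the paper appeals to the correspondence fact about $C(\cdot,\cdot;\cdot)$ under quotients (your $(\ast)$, which the paper cites from \cite{kearnes-kiss}), and for part~\eqref{quotient:it2} both you and the paper factor $f$ through the canonical quotient and an isomorphism, then apply part~\eqref{quotient:it1}. The only difference is that you supply a direct proof of $(\ast)$ rather than citing it.
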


\begin{proof}
\eqref{quotient:it1} can be deduced from a corresponding fact about
the relation $C(\rule{3pt}{0pt},\rule{3pt}{0pt};\rule{3pt}{0pt})$ such
as that given in \cite[Theorem 2.19(10)]{kearnes-kiss}.  To prove
\eqref{quotient:it2}, let $\delta'=\ker(f) = f^{-1}(0)$, $\delta = f^{-1}(\lambda)$,
and $\theta = f^{-1}(\mu)$.  Also let $\barf$ denote the isomorphism 
$\barf:\m a/\delta'\cong\m b$ naturally induced by $f$.
The corresponding isomorphism $\barf:\Con(\m a/\delta')\cong\Con(\m b)$
is then given explicitly by $\barf(\psi/\delta')=\varphi\iff\psi=f^{-1}(\varphi)$.
Thus 
\begin{align*}
\barf((\delta:\theta)/\delta') &= \barf((\delta/\delta':\theta/\delta')) &\mbox{by \eqref{quotient:it1}}\\
&= (\barf(\delta/\delta'):\barf(\theta/\delta')) &\mbox{as $\barf$ is an isomorphism}\\
&= (\lambda:\mu),
\end{align*}
which proves $(\delta:\theta) = f^{-1}((\lambda:\mu))$.
\end{proof}

\begin{df}
Let $\calV$ be a variety, $\m a\in \calV$, and $d(x,y,z)$ a ternary term in the
signature of $\calV$.
\begin{enumerate}
\item
$d$ is a \emph{weak difference term} for $\m a$ if $d$ is idempotent and for
every pair $\delta,\theta$ of congruences with $\delta\leq \theta$ and $\theta/\delta$
abelian, we have
\begin{equation} \label{eq:wdt}
\mbox{$d(a,a,b) \stackrel{\delta}{\equiv} b \stackrel{\delta}{\equiv} d(b,a,a)$
for all $(a,b) \in \theta$.}
\end{equation}
\item
$d$ is a \emph{weak difference term} for $\calV$ if it is a weak difference term for
every algebra in $\calV$.
\end{enumerate}
\end{df}

Note in particular that if $d$ is a weak difference term for $\m a$ and $\theta$ is
an abelian congruence,
then setting $\delta=0$ in \eqref{eq:wdt} gives that
the restriction of $d$ to any $\theta$-class is a Maltsev operation on that class.

Because the definition of ``weak difference term" is not given in terms of identities, 
it can happen that an
algebra has a weak difference term, but does not belong to any
variety having a weak difference term.  
Therefore, 
we will take care to distinguish ``having a weak difference term" from
the stronger ``belonging to a variety with a weak difference term."

At the level of varieties, the property of 
having a weak difference term can be 
characterized by the existence of idempotent terms satisfying a certain
pattern of identities, i.e., by an \emph{idempotent Maltsev condition},
as shown by Kearnes and Szendrei \cite[Theorem 4.8]{kearnes-szendrei}.
(See Theorem~\ref{thm:maltsev-cond} 
for a slight improvement.)
In the context of locally finite varieties, the situation is simpler.

\begin{thm} \label{thm:locfin}
For a locally finite variety $\calV$, the following are equivalent:
\begin{enumerate}
\item \label{locfin:it1}
$\calV$ has a Taylor term.
\item \label{locfin:it2}
$\calV$ has a WNU term.
\item \label{locfin:it3}
$\calV$ has a weak difference term.
\end{enumerate}
\end{thm}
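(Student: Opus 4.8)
The plan is to obtain the theorem by combining known results, establishing the implications \eqref{locfin:it2}\Implies\eqref{locfin:it1}, \eqref{locfin:it1}\Implies\eqref{locfin:it2}, \eqref{locfin:it1}\Implies\eqref{locfin:it3}, and \eqref{locfin:it3}\Implies\eqref{locfin:it1}. The first is immediate and was already noted above: a WNU term $w(x_1,\ldots,x_n)$ is a Taylor term, since for each $i\in[n]$ the weak near-unanimity identities equate $w(x,\ldots,x,y,x,\ldots,x)$ (with $y$ in coordinate $i$) to $w(y,x,\ldots,x)$ if $i\ne 1$, and to $w(x,y,x,\ldots,x)$ if $i=1$.

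For \eqref{locfin:it1}\Implies\eqref{locfin:it2} I would cite the theorem of Mar\'oti and McKenzie that every locally finite Taylor variety has a weak near-unanimity term; local finiteness is essential here, the symmetric term being produced inside the (finite) finitely generated free algebras of $\calV$ --- or, in a more recent approach, via the existence of cyclic terms in those algebras. For \eqref{locfin:it1}\Implies\eqref{locfin:it3} I would cite the theorem of Hobby and McKenzie that a locally finite Taylor variety --- equivalently, by Taylor's theorem, a locally finite variety omitting the unary type $\mathbf{1}$ --- possesses an idempotent ternary term $d$ satisfying exactly condition \eqref{eq:wdt}, i.e.\ a weak difference term.

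For \eqref{locfin:it3}\Implies\eqref{locfin:it1} I would argue without appealing to local finiteness. By the characterization of Kearnes and Szendrei (recovered and slightly sharpened in Theorem~\ref{thm:maltsev-cond} below), a variety has a weak difference term if and only if it satisfies a certain idempotent Maltsev condition $\Sigma$; and $\Sigma$ is \emph{nontrivial}, for in the variety of sets every term operation is a projection, so the term condition of Definition~\ref{df:cent} holds vacuously and every congruence --- in particular the full congruence of the two-element set --- is abelian, while no projection restricts to a Maltsev operation on a two-element set, whence the variety of sets has no weak difference term and so does not satisfy $\Sigma$. Taylor's theorem --- that a variety satisfying a nontrivial idempotent Maltsev condition has a Taylor term --- then yields \eqref{locfin:it1}.

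I expect the one genuine difficulty to lie in \eqref{locfin:it1}\Implies\eqref{locfin:it2}: producing a truly symmetric term is the step with no short argument, and is exactly where finiteness is used. The implication \eqref{locfin:it3}\Implies\eqref{locfin:it1} also has a direct tame-congruence-theoretic proof --- a type-$\mathbf{1}$ trace is polynomially a $G$-set and so carries no Maltsev-like polynomial, contradicting the Maltsev behaviour on an abelian prime quotient forced by \eqref{eq:wdt} --- but routing it through the Maltsev-condition characterization avoids re-deriving that normal form and keeps the argument short.
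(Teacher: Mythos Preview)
Your proposal is correct and follows essentially the same approach as the paper: the paper's proof simply cites Hobby--McKenzie \cite{tct}, Taylor \cite{taylor}, Mar\'oti--McKenzie \cite{maroti-mckenzie}, and Kearnes--Szendrei \cite{kearnes-szendrei}, which are exactly the ingredients you identify and combine. Your added detail---verifying explicitly that the variety of sets has no weak difference term, so that the Maltsev condition is nontrivial---is a helpful elaboration but not a departure from the paper's route.
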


\begin{proof}
This follows by combining \cite[Theorem 9.6]{tct},
\cite[Corollary 5.3]{taylor}, \cite[Theorem 2.2]{maroti-mckenzie},
and \cite[Theorem 4.8]{kearnes-szendrei}.
\end{proof}

Lemmas~\ref{lm:inject} and~\ref{lm:arrow} and Corollary~\ref{cor:directed}
mention \emph{difference terms}; these
are weak difference terms which moreover satisfy the identity
$d(x,x,y)\approx y$.  By an old theorem of Gumm \cite{gumm-easyway},
every congruence modular variety has a difference term.

The proof of Proposition~\ref{prp:TC2} will make use
of the \emph{two-term condition}.  
A congruence $\theta \in \Con(\m a)$ is said to satisfy 
the \emph{two-term condition} if
%This is the requirement on a congruence $\theta \in \Con(\m a)$ that 
for all
$(\theta,\theta)$-matrices
\[
\begin{pmatrix} a_1&a_3\\a_2&a_4\end{pmatrix},
\begin{pmatrix} b_1&b_3\\b_2&b_4\end{pmatrix} \in M(\theta,\theta),
\]
if $a_i=b_i$ for $i=1,2,3$, then $a_4=b_4$.  It is easy an easy exercise to show that
if $\theta$ satisfies the two-term condition, then $\theta$ is abelian.
The converse implication is also true if $\m a$ belongs to a variety having
a weak difference term, by \cite[Theorem 3.1]{lipparini1996}
or \cite[Corollary 4.5]{kearnes-szendrei}.  (See Lemma~\ref{lm:2TC} for 
a slight improvement.)

\section{Tools} \label{sec:tools}

In this section we gather some technical results 
which will be used in later sections.
We start with results about algebras having a weak difference term.

The first result is an easily proved generalization of the
well-known fact (see e.g.\ \cite[Theorem 4.65(1)]{bergman})
that if an algebra $\m a$ has a Maltsev term, then
every reflexive subuniverse of $\m a^2$ is a congruence of $\m a$.
(A proof can be easily extracted from 
the proof of \cite[Corollary 6.22]{kearnes-kiss}.)

\begin{lm} \label{lm:maltsev}
Suppose $\m a$ is an algebra and $\rho$ is a reflexive subuniverse of $\m a^2$.
Suppose $\m a$ has a ternary 
term $d(x,y,z)$ such that for all $(a,b) \in \rho$ we have
$d(a,a,b)=b$ and $d(a,b,b)=a$.
Then $\rho\in\Con(\m a)$.
\end{lm}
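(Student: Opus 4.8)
The plan is to mimic the standard Maltsev-term argument that every reflexive subuniverse of $\m a^2$ is a congruence, but using only the weaker hypothesis that $d$ behaves like a Maltsev operation \emph{on elements of $\rho$ itself}. So fix $\rho$ as in the statement; it is already reflexive and a subuniverse, so it remains to check symmetry and transitivity.

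For symmetry, suppose $(a,b)\in\rho$. Since $\rho$ is reflexive we have $(a,a)\in\rho$ and $(b,b)\in\rho$, and of course $(a,b)\in\rho$; applying the term operation $d$ coordinatewise to the three pairs $(a,a),(a,b),(b,b)$ (all in $\rho$) gives that $\bigl(d(a,a,b),\,d(a,b,b)\bigr)\in\rho$ because $\rho$ is a subuniverse. By hypothesis $d(a,a,b)=b$ and $d(a,b,b)=a$, so $(b,a)\in\rho$. Hence $\rho$ is symmetric.

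For transitivity, suppose $(a,b)\in\rho$ and $(b,c)\in\rho$. We want $(a,c)\in\rho$. By symmetry (just proved) $(c,b)\in\rho$. Now apply $d$ coordinatewise to the three pairs $(a,b)$, $(b,b)$, $(c,b)$, all of which lie in $\rho$; since $\rho$ is a subuniverse the result $\bigl(d(a,b,c),\,d(b,b,b)\bigr)\in\rho$, i.e.\ $\bigl(d(a,b,c),\,b\bigr)\in\rho$. That is not yet what we want, so instead the better choice is to feed $d$ the pairs $(a,b),(b,b),(b,c)$; wait — these must all be in $\rho$: $(a,b)\in\rho$, $(b,b)\in\rho$, $(b,c)\in\rho$, good. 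Coordinatewise this yields $\bigl(d(a,b,b),\,d(b,b,c)\bigr)\in\rho$. By hypothesis $d(a,b,b)=a$; but $d(b,b,c)=c$ requires $(b,c)\in\rho$, which holds. Hence $(a,c)\in\rho$, and $\rho$ is transitive. Being a reflexive, symmetric, transitive subuniverse of $\m a^2$, $\rho$ is a congruence.

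The only subtlety — and the one point to be careful about — is that in each application of $d$ we are allowed to plug in \emph{only} pairs that are known to belong to $\rho$, so each of the three ``rows'' fed to $d$ must be verified to lie in $\rho$, using reflexivity and (for transitivity) the already-established symmetry; we never invoke $d(a,a,b)=b$ or $d(a,b,b)=a$ for arbitrary $a,b\in A$, only for $(a,b)\in\rho$. That is exactly where the hypothesis is tailored to match, so there is no real obstacle; the proof is a short direct computation. (It is worth noting that this is strictly weaker than assuming $d$ is a Maltsev \emph{term} of $\m a$, which is why the lemma is useful in the weak-difference-term setting, where $d$ is Maltsev only on abelian blocks.)
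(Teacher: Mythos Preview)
Your argument is correct and is exactly the standard Maltsev-style computation the paper has in mind (the paper does not spell out a proof but refers to \cite[Corollary~6.22]{kearnes-kiss}, whose argument is essentially this one). One stylistic point: the false start in the transitivity paragraph---invoking symmetry to get $(c,b)\in\rho$ and then applying $d$ to $(a,b),(b,b),(c,b)$---is unnecessary and should be deleted; the clean argument using $(a,b),(b,b),(b,c)$ that you give immediately afterward needs neither symmetry nor any detour.
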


\begin{cor} \label{cor:maltsev}
Suppose $\m a$ is an algebra with a weak difference term, $\theta\in \Con(\m a)$
is abelian, and $\theta$ is minimal.  Then for all $(a,b),(a',b') \in \theta$,
if $a\ne b$ then there exists a unary polynomial $f \in \Pol_1(\m a)$ 
satisfying $f(a)=a'$ and $f(b)=b'$.
\end{cor}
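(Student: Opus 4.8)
The plan is to fix a pair $(a,b)\in\theta$ with $a\ne b$, consider the binary relation
\[
\rho \;:=\; \{\,(f(a),f(b)) : f\in\Pol_1(\m a)\,\},
\]
and show that $\rho$ is a congruence of $\m a$. Once that is done the conclusion is immediate: $\rho$ contains the non-diagonal pair $(a,b)$, so $\rho\neq 0$; and $\rho\leq\theta$, so minimality of $\theta$ forces $\rho=\theta$; hence $(a',b')\in\theta=\rho$ exhibits the required unary polynomial.

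First I would record the elementary properties of $\rho$. It is a subuniverse of $\m a^2$, since if $(f_1(a),f_1(b)),\dots,(f_k(a),f_k(b))\in\rho$ and $g$ is a basic $k$-ary operation of $\m a$, then applying $g^{\m a^2}$ to these pairs yields $(h(a),h(b))$ with $h(x)=g(f_1(x),\dots,f_k(x))\in\Pol_1(\m a)$. It is reflexive, because the constant polynomial $x\mapsto c$ lies in $\Pol_1(\m a)$ for every $c\in A$, so $(c,c)\in\rho$. And $\rho\subseteq\theta$, because $(a,b)\in\theta$ and $\theta$, being a congruence, is closed under unary polynomials.

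Next I would invoke the weak difference term. Since $\theta$ is abelian and $\m a$ has a weak difference term $d$, the remark following the definition of ``weak difference term'' (take $\delta=0$ in \eqref{eq:wdt}) tells us that the restriction of $d$ to each $\theta$-class is a Maltsev operation on that class. So if $(u,v)\in\rho$, then $u$ and $v$ lie in a common $\theta$-class (as $\rho\subseteq\theta$), and therefore $d(u,u,v)=v$ and $d(u,v,v)=u$. Thus $\rho$ is a reflexive subuniverse of $\m a^2$ on which $d$ behaves exactly as required in the hypothesis of Lemma~\ref{lm:maltsev}, and that lemma gives $\rho\in\Con(\m a)$. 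Combining this with $0<\rho\leq\theta$ and the minimality of $\theta$ yields $\rho=\theta$, and applying the defining description of $\rho$ to the pair $(a',b')\in\theta=\rho$ produces $f\in\Pol_1(\m a)$ with $f(a)=a'$ and $f(b)=b'$.

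There is no serious obstacle here; the one point that needs a moment of care is the verification that $d$ acts as a Maltsev operation on every pair from $\rho$, since that is precisely where the abelianness of $\theta$ and the defining property of the weak difference term are used, putting us in position to apply Lemma~\ref{lm:maltsev}. Everything else is routine bookkeeping about polynomials and congruences.
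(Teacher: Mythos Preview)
Your proof is correct and follows essentially the same approach as the paper: define $\rho=\{(f(a),f(b)):f\in\Pol_1(\m a)\}$, observe it is a reflexive subuniverse of $\m a^2$ contained in $\theta$, apply Lemma~\ref{lm:maltsev} (using that $d$ restricts to a Maltsev operation on $\theta$-classes) to get $\rho\in\Con(\m a)$, and then invoke minimality. The paper's version is more terse, but the argument is the same.
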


\begin{proof}
Let $\rho = \set{(f(a),f(b))}{$f \in \Pol_1(\m a)$}$.  Then
$\rho$ is a reflexive
subuniverse of $\m a^2$ 
satisfying $0_A\ne  \rho \subseteq \theta$.
Hence $\rho \in \Con(\m a)$ by Lemma~\ref{lm:maltsev}, and thus $\rho=\theta$
as $\theta$ is minimal.
\end{proof}

\begin{lm} \label{lm:2TC}
Suppose the algebra $\m a$ has a weak difference term and $\theta \in
\Con(\m a)$.  If $\theta$ is abelian, then $\theta$ satisfies the 
two-term condition.
\end{lm}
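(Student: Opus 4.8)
The plan is to build, from any two $(\theta,\theta)$-matrices agreeing in three of their entries, a single $(\theta,\theta)$-matrix having three equal entries, to which the abelianness of $\theta$ can then be applied directly. So let $d$ be a weak difference term for $\m a$; since $\theta$ is abelian, the restriction of $d$ to each $\theta$-class is a Maltsev operation, so $d(x,x,z)=z$ and $d(x,z,z)=x$ whenever $x,z$ lie in a common $\theta$-class. Take $(\theta,\theta)$-matrices
\[
M=\begin{pmatrix} a_1&a_3\\ a_2&a_4\end{pmatrix},\qquad
N=\begin{pmatrix} a_1&a_3\\ a_2&b_4\end{pmatrix}
\]
(we have already substituted $b_i=a_i$ for $i\le 3$), the aim being to show $a_4=b_4$.

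The construction I would use is to apply the term operation $d^{\m a^{2\times 2}}$ --- which acts entrywise under the identification $\m a^{2\times 2}\cong\m a^4$ --- to the triple $M$, $N$, $E$, where $E=\begin{pmatrix} b_4&b_4\\ b_4&b_4\end{pmatrix}$ is the constant matrix. Since $(b_4,b_4)\in\theta$ we have $E\in X(\theta,\theta)\subseteq M(\theta,\theta)$, and as $M(\theta,\theta)$ is a subuniverse of $\m a^{2\times 2}$ it follows that $P:=d^{\m a^{2\times 2}}(M,N,E)\in M(\theta,\theta)$. To evaluate $P$, first note that since $M,N\in\m a(\theta,\theta)$ the rows and columns of each are $\theta$-related, so all entries of $M$, and all entries of $N$, lie in a single $\theta$-class; as $M$ and $N$ share the entry $a_1$, all of $a_1,a_2,a_3,a_4,b_4$ lie in the $\theta$-class $B$ of $a_1$. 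Applying the Maltsev identities for $d$ on $B$, the $(1,1)$, $(1,2)$, $(2,1)$ entries of $P$ collapse: $d(a_1,a_1,b_4)=b_4$, $d(a_3,a_3,b_4)=b_4$, $d(a_2,a_2,b_4)=b_4$, while the $(2,2)$ entry is $d(a_4,b_4,b_4)=a_4$. Hence
\[
P=\begin{pmatrix} b_4&b_4\\ b_4&a_4\end{pmatrix}\in M(\theta,\theta).
\]

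To finish, $\theta$ abelian means $C(\theta,\theta;0)$ holds, so by Definition~\ref{df:cent}\eqref{dfcent:it1} applied to $P$: since its top row $(b_4,b_4)$ lies in $0_A$, so does its bottom row $(b_4,a_4)$; that is, $a_4=b_4$, which is exactly the two-term condition. I do not anticipate a genuine obstacle here: the only real content is choosing the right third matrix to feed into $d$ (the \emph{constant} matrix $E$, rather than $M$ or $N$ itself) together with the elementary observation that the four entries of a $(\theta,\theta)$-matrix all lie in one $\theta$-class. A more pedestrian route --- writing $M$ and $N$ as term operations applied to the generators of $M(\theta,\theta)$ and comparing --- appears to lead to messier bookkeeping. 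I would also record that the argument uses only that $d$ is a weak difference term for $\m a$ itself, which is the slight improvement over the cited results.
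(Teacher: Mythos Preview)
Your proof is correct. The paper itself does not give a self-contained argument but simply cites Lipparini's proof of \cite[Theorem~3.1(a)]{lipparini1996}, noting that Lipparini's argument goes through assuming only that $\m a$ has a weak difference term; your direct construction (applying $d$ entrywise to $M$, $N$, and the constant matrix $E$ and then invoking $C(\theta,\theta;0)$ on the resulting matrix) is exactly the sort of argument one expects that citation to unpack to, and your closing remark about needing only a weak difference term for $\m a$ matches the paper's observation precisely.
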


\begin{proof}
See Lipparini's proof of \cite[Theorem 3.1(a)]{lipparini1996}.  
His Theorem 3.1(a) assumes
that $\m a$ belongs to a variety with a weak
difference term, but in fact all
that the argument requires is that $\m a$ itself have a weak difference term.
\end{proof}

\medskip
As a consequence, a well-known property of 
difference terms in congruence modular varieties due to Gumm
\cite{gumm-easyway} partially extends to algebras with weak difference terms.

\begin{prp} \label{prp:TC2}
Suppose $\m a$ is an algebra with a weak difference term $d(x,y,z)$, and
$\theta \in \Con(\m a)$ is an abelian congruence of $\m a$.
Then for every polynomial operation $t(x_1,\ldots,x_n)$ and all
$a_i,b_i,c_i \in A$  $(1 \leq i \leq n)$ with $a_i \stackrel{\theta}{\equiv} b_i \stackrel{\theta}{\equiv} c_i$,
\[
d(t(\tup{a}),t(\tup{b}),t(\tup{c})) = 
t(d(a_1,b_1,c_1),\ldots,d(a_n,b_n,c_n)).
\]
\end{prp}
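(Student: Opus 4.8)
The plan is to read the asserted identity as the equality of the two bottom-right entries of a pair of $(\theta,\theta)$-matrices that agree in their other three entries, and then to invoke the two-term condition for $\theta$, which holds by Lemma~\ref{lm:2TC} because $\theta$ is abelian.

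First I would introduce the two polynomial operations
\[
g(\tup{x},\tup{z}) := d\bigl(t(\tup{x}),t(\tup{b}),t(\tup{z})\bigr),
\qquad
g'(\tup{x},\tup{z}) := t\bigl(d(x_1,b_1,z_1),\ldots,d(x_n,b_n,z_n)\bigr)
\]
in the $2n$ variables $x_1,\ldots,x_n,z_1,\ldots,z_n$, treating the entries of $\tup{b}$ (and any parameters of $t$) as fixed constants. Using the remark following the definition of a weak difference term (so $d$ restricts to a Maltsev operation on each $\theta$-class, and in particular $d(u,u,w)=w$ and $d(u,w,w)=u$ whenever $(u,w)\in\theta$), one verifies the two ``boundary'' identities
\[
g(\tup{b},\tup{z}) = g'(\tup{b},\tup{z}) = t(\tup{z}) \ \text{ when each } z_i\stackequiv{\theta}b_i,
\qquad
g(\tup{x},\tup{b}) = g'(\tup{x},\tup{b}) = t(\tup{x}) \ \text{ when each } x_i\stackequiv{\theta}b_i.
\]
Each is a one-line computation, the point being that $t(\tup{b})$ and $t(\tup{z})$ (respectively $t(\tup{x})$ and $t(\tup{b})$) then lie in a common $\theta$-class.

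Next I would form the matrices
\[
P := \begin{pmatrix} g(\tup{b},\tup{b}) & g(\tup{b},\tup{c}) \\ g(\tup{a},\tup{b}) & g(\tup{a},\tup{c})\end{pmatrix},
\qquad
P' := \begin{pmatrix} g'(\tup{b},\tup{b}) & g'(\tup{b},\tup{c}) \\ g'(\tup{a},\tup{b}) & g'(\tup{a},\tup{c})\end{pmatrix}.
\]
Since $(b_i,a_i)\in\theta$ and $(b_i,c_i)\in\theta$, the matrix $P$ is obtained by applying $g$ entrywise to the matrices $\bigl(\begin{smallmatrix}b_i&b_i\\a_i&a_i\end{smallmatrix}\bigr)\in X(\theta,\theta)$ (assigned to $x_i$) and $\bigl(\begin{smallmatrix}b_i&c_i\\b_i&c_i\end{smallmatrix}\bigr)\in X(\theta,\theta)$ (assigned to $z_i$), together with the constant matrices $\bigl(\begin{smallmatrix}e&e\\e&e\end{smallmatrix}\bigr)\in X(\theta,\theta)$ for the parameters $e$ of $g$; hence $P\in M(\theta,\theta)$, and likewise $P'\in M(\theta,\theta)$. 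By the boundary identities, the $(1,1)$, $(2,1)$ and $(1,2)$ entries of $P$ and $P'$ coincide (they equal $t(\tup{b})$, $t(\tup{a})$ and $t(\tup{c})$, respectively). The two-term condition for $\theta$ then forces the $(2,2)$ entries to coincide; since these are $g(\tup{a},\tup{c}) = d(t(\tup{a}),t(\tup{b}),t(\tup{c}))$ and $g'(\tup{a},\tup{c}) = t(d(a_1,b_1,c_1),\ldots,d(a_n,b_n,c_n))$, this is exactly the desired equation.

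I do not anticipate a genuine obstacle. The only points requiring care are the bookkeeping that really places $P$ and $P'$ in $M(\theta,\theta)$ (in particular, handling the parameters of $t$ via harmless constant matrices), and making sure that every use of the Maltsev behaviour of $d$ is at a triple of elements that actually lies within a single $\theta$-class.
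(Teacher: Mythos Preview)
Your proof is correct and is essentially the same argument as the paper's: both build a pair of $(\theta,\theta)$-matrices from two auxiliary polynomials, check that they agree in three entries using the Maltsev identities for $d$ on $\theta$-classes, and then invoke the two-term condition (Lemma~\ref{lm:2TC}) to equate the fourth entries. The only cosmetic difference is that the paper fixes $\tup a$ as a parameter (defining $r(\tup x,\tup y)=d(t(\tup a),t(\tup x),t(\tup y))$ and $s(\tup x,\tup y)=t(d(a_1,x_1,y_1),\ldots)$) while you fix $\tup b$; this is immaterial.
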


\begin{proof}
Obviously $t(\tup{a}),t(\tup{b}),t(\tup{c})$ belong to a common $\theta$-class.
Define polynomial operations $r(\tup{x},\tup{y}) =d(t(\tup{a}),t(\tup{x}),t(\tup{y}))$ and
$s(\tup{x},\tup{y}) = t(d(a_1,x_1,y_1),\ldots,d(a_n,x_n,y_n))$.
Then
\[
\begin{pmatrix}
r(\tup{a},\tup{b}) & r(\tup{a},\tup{c})\\
r(\tup{b},\tup{b}) & r(\tup{b},\tup{c})
\end{pmatrix} = \begin{pmatrix}
t(\tup{b}) & t(\tup{c})\\
t(\tup{a}) & d(t(\tup{a}),t(\tup{b}),t(\tup{c}))\end{pmatrix}  =:M_1
\]
while
\[
\begin{pmatrix}
s(\tup{a},\tup{b}) & s(\tup{a},\tup{c})\\
s(\tup{b},\tup{b}) & s(\tup{b},\tup{c})
\end{pmatrix} = \begin{pmatrix}
t(\tup{b}) & t(\tup{c})\\
t(\tup{a}) & t(d(a_1,b_1,c_1),\ldots,d(a_n,b_n,c_n))\end{pmatrix} =: M_2.
\]
Obviously $M_1,M_2$ are $(\theta,\theta)$-matrices, and since they agree
at all positions except one, they form an instance of the two-term condition
for $\theta$.  As $\theta$ is abelian and $\m a$ has a weak difference term,
$\theta$ satisfies the two-term condition by
Lemma~\ref{lm:2TC}.
Thus the lower-right entries of $M_1$ and $M_2$ are equal, as desired.
\end{proof}

The next result is an immediate consequence of the
definition of weak difference term and a classical 
theorem of Gumm \cite{gumm-maltsev} and Herrmann \cite{herrmann1979}. 

\begin{lm} \label{lm:gumm}
Suppose $\m a$ is an algebra having a weak difference term $d(x,y,z)$,
and $\theta$ is an abelian congruence of $\m a$.  Then for all $e \in A$,
the structure $(e/\theta,+,e)$ is an abelian group where $x+y
:= d(x,e,y)$ for $x,y \in e/\theta$.  Moreover, in this group we have
$-x=d(e,x,e)$ and $d(x,y,z)=x-y+z$ for all $x,y,z \in e/\theta$.
\end{lm}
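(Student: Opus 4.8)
The plan is to reduce everything to the defining property of a weak difference term together with the fact (recorded after the definition of weak difference term) that the restriction of $d$ to a single $\theta$-class is a Maltsev operation, and then to invoke the classical Gumm--Herrmann theorem that an abelian algebra with a Maltsev term is polynomially equivalent to a module. First I would fix $e \in A$ and work inside the block $B := e/\theta$. By Proposition~\ref{prp:TC2} (applied with the polynomial operations being the basic operations of $\m a$, suitably composed), the restriction to $B$ of every polynomial operation of $\m a$ that preserves $B$ commutes with $d$; in particular $d\restrict{B}$ is a Maltsev operation on $B$ which lies in the clone of polynomial operations of the algebra $\m b := \m a\restrict{B}$ (the algebra of polynomial operations of $\m a$ that map $B$ to $B$, restricted to $B$). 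Moreover $\theta$ abelian means $C(\theta,\theta;0)$, which restricts to give that the congruence $1_B$ of $\m b$ is abelian.

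Next I would quote the Gumm--Herrmann theorem in the form: if $\m b$ has a Maltsev polynomial and $\m b$ is abelian, then $\m b$ is affine, i.e. polynomially equivalent to a (unital) module over a ring, and any Maltsev polynomial $p$ of $\m b$ automatically satisfies $p(x,y,z) = x - y + z$ in the associated module structure. Concretely, picking $e \in B$ as the zero element, the operation $x + y := d(x,e,y)$ is the module addition, $-x := d(e,x,e)$ is the additive inverse, and $(B,+,-,e)$ is an abelian group; the identity $d(x,y,z) = x - y + z$ on $B$ is then exactly the statement that $d\restrict{B}$ is the ``term $x-y+z$'' of the module, which is part of the Gumm--Herrmann conclusion. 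All three displayed assertions in the statement of the lemma are thus read off directly once affineness is in hand.

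The one point that needs a little care — and the part I expect to be the main (very mild) obstacle — is the bookkeeping needed to apply Gumm--Herrmann cleanly. The cited theorem is usually stated for an algebra with a Maltsev \emph{term} all of whose term operations are considered, whereas here $d\restrict{B}$ is only a \emph{polynomial} of $\m b$, and $\m b$ itself is the ``polynomial restriction'' $\m a\restrict{B}$ rather than a subalgebra of $\m a$. So I would phrase the reduction as: $\m b$ is an algebra (in some signature) whose clone of polynomial operations contains a Maltsev operation, and whose sole congruence $1_B$ is abelian; since affineness is a property of the polynomial clone and the congruence lattice (not of the chosen signature), Gumm--Herrmann applies verbatim to $\m b$. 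The fact that $\theta$-restriction of $M(\theta,\theta)$-matrices gives exactly the $M(1_B,1_B)$-matrices of $\m b$, so that $C(\theta,\theta;0)$ descends to $C(1_B,1_B;0)$, is the only verification that is not completely formal, and it follows from the description of matrices in Definition~\ref{df:M} together with the observation that polynomials of $\m a$ preserving $B$ are precisely the polynomials of $\m b$. Once that is said, the proof is just the sentence ``apply Gumm--Herrmann and read off the three identities.''
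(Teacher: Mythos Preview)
Your approach is essentially the paper's: restrict to $B=e/\theta$, observe that the resulting structure is abelian with a Maltsev operation, and invoke Gumm--Herrmann. The paper, however, sidesteps the bookkeeping you flag as the ``main obstacle'' by a simpler choice of algebra: rather than the full polynomial restriction $\m a\restrict B$, it considers the bare algebra $(B,\,d\restrict{B^3})$, for which $d$ is a Maltsev \emph{term} (not merely a polynomial) and whose abelianness follows immediately since its term operations are restrictions of term operations of $\m a$. With that choice, Gumm--Herrmann applies directly and the proof is one sentence. Your invocation of Proposition~\ref{prp:TC2} is not needed here---$d\restrict B$ is Maltsev straight from the definition of weak difference term, and the ``in particular'' does not follow from the commuting property you cite; that proposition is used in the paper only later (for Lemma~\ref{lm:samewdt}).
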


\begin{proof}
Let $\thcl=e/\theta$.  Then $(\thcl,d\restrict{\thcl^3})$ is an abelian algebra with 
Maltsev operation $d\restrict{\thcl^3}$.  Now apply Gumm 
\cite[Theorem 4.7]{gumm-maltsev} or Herrmann's main theorem from
\cite{herrmann1979} (or 
see \cite[Corollary~2]{taylor1982} for a short proof).
\end{proof}

While an algebra may have many different weak difference terms, they all
agree whenever restricted to a class of an abelian congruence.

\begin{lm} \label{lm:samewdt}
Suppose $\m a$ is an algebra and $\theta$ is
an abelian congruence.  If $d(x,y,z)$ and $t(x,y,z)$ are two weak difference
terms for $\m a$, then 
for every $\theta$-class $\thcl$ we have that 
$d\restrict{\thcl^3}=t\restrict{\thcl^3}$.
%$d(a,b,c)=t(a,b,c)$ whenever 
%$a\stackrel\theta\equiv b\stackrel\theta\equiv c$.
%$a,b,c \in \thcl$.
\end{lm}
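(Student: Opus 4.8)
The plan is to reduce the statement to a short computation inside the abelian groups produced by Lemma~\ref{lm:gumm}, using Proposition~\ref{prp:TC2} as the bridge. Fix a $\theta$-class $B$ and a point $e\in B$. By Lemma~\ref{lm:gumm} applied to $d$, the set $B$ carries an abelian group structure $(B,+,e)$ with $x+y=d(x,e,y)$ and with $d(x,y,z)=x-y+z$ for all $x,y,z\in B$. Note also that, since $t$ is idempotent, $t(a,b,c)\in B$ whenever $a\stackrel{\theta}{\equiv}b\stackrel{\theta}{\equiv}c$, and that (by the remark following the definition of weak difference term, with $\delta=0$) the restriction to $B$ of any weak difference term for $\m a$, in particular of $d$ and of $t$, is a Maltsev operation on $B$. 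Thus it suffices to show $t(a,b,c)=a-b+c$ for all $a,b,c\in B$.

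To obtain this I would apply Proposition~\ref{prp:TC2}, taking the weak difference term there to be $d$ and the polynomial operation there to be the ternary term $t$, and feeding in the tuples $\tup{a}=(a,b,c)$, $\tup{b}=(b,b,c)$, $\tup{c}=(b,c,c)$. All nine entries lie in $B$, so the hypothesis $a_i\stackrel{\theta}{\equiv}b_i\stackrel{\theta}{\equiv}c_i$ holds, and every triple occurring as an argument of $t$ or $d$ in the resulting equation lies in $B$, so the Maltsev identities for $t\restrict{B^3}$ and $d\restrict{B^3}$ may be used freely. On the left-hand side of the conclusion of Proposition~\ref{prp:TC2}, the subterms $t(b,b,c)$ and $t(b,c,c)$ collapse to $c$ and $b$, so the left side becomes $d(t(a,b,c),c,b)$; on the right-hand side, $d(a,b,b)$, $d(b,b,c)$ and $d(c,c,c)$ collapse to $a$, $c$, $c$, after which $t(a,c,c)$ collapses to $a$. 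Hence Proposition~\ref{prp:TC2} yields the single equation $d(t(a,b,c),c,b)=a$.

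It remains to interpret this equation in the group $(B,+,e)$. Writing $w=t(a,b,c)$, Lemma~\ref{lm:gumm} turns $d(w,c,b)=a$ into $w-c+b=a$; solving for $w$ in the group gives $w=a-b+c$, which equals $d(a,b,c)$ by Lemma~\ref{lm:gumm} once more. Therefore $t(a,b,c)=d(a,b,c)$, and since $a,b,c\in B$ and the $\theta$-class $B$ were arbitrary, the lemma follows. The one point calling for care is the bookkeeping in the application of Proposition~\ref{prp:TC2}: the nine arguments must be chosen so that, after using the Maltsev identities, exactly one genuine occurrence of $t$ survives on the left and exactly one genuine occurrence of $d$ survives on the right. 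The choice displayed above does this, and I anticipate no further difficulty.
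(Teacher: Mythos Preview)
Your proof is correct and follows essentially the same approach as the paper: apply Proposition~\ref{prp:TC2} with weak difference term $d$ and polynomial $t$ at a carefully chosen triple of $\theta$-related tuples, then simplify using the Maltsev identities on $B$. The paper's choice of tuples, $\tup a=(a,b,b)$, $\tup b=(b,b,b)$, $\tup c=(b,b,c)$, is slightly cleaner, since both sides of the resulting identity collapse directly to $d(a,b,c)$ and $t(a,b,c)$ without the detour through Lemma~\ref{lm:gumm} and the final group-theoretic manipulation.
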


\begin{proof}
Again this is a well-known consequence of $(\thcl,d\restrict{\thcl^3})$ being
an abelian Maltsev operation.  For $a,b,c \in \thcl$, Proposition~\ref{prp:TC2}
yields
\[
d(t(a,b,b),t(b,b,b),t(b,b,c)) = t(d(a,b,b),d(b,b,b),d(b,b,c))
\]
which simplifies to $d(a,b,c)=t(a,b,c)$.
\end{proof}

\begin{df}
Suppose $\m a$ has a weak difference term, $\theta$ is an abelian
congruence, and $e \in A$.
We let $\Grp a(\theta,e)$ denote the abelian group with universe $e/\theta$,
zero element $e$, and group operation $x+y:=d(x,e,y)$, 
where $d$ is a weak difference term.
\end{df}

The operation $+$ in the previous definition does not depend
on the choice of the weak difference term, by Lemma~\ref{lm:samewdt}.
The notation $M(\theta,e)$ has been used in \cite{freese-mckenzie} and
elsewhere for $\Grp a(\theta,e)$.
%In case $\m a$ is finite and $\theta$ is a minimal congruence, more
%can be said about the groups $\Grp a(\theta,e)$.  

The next result is known in the congruence modular setting
(e.g., \cite[Chapter 5]{freese-mckenzie}), and is folklore for varieties
with a weak difference term.  We sketch a proof showing that only the algebra
$\m a$ itself needs to have a weak difference term.

\begin{prp} \label{prp:affine}
Suppose $\m a$ is an algebra having a weak difference term, and
$\theta$ is an abelian congruence.  
\begin{enumerate}
\item \label{affine:it1}
Suppose $\thcl_1,\thcl_2$ are $\theta$-classes and $e_i \in \thcl_i$
for $i=1,2$.  If $r \in \Pol_1(\m a)$ satisfies $r(e_1)=e_2$, then
$r\restrict{\thcl_1}$ is a group homomorphism from $\Grp a(\theta_1,e_1)$
to $\Grp a(\theta_2,e_2)$.
\item \label{affine:it2}
Suppose $f \in \Pol_n(\m a)$ and $\thcl_1,\ldots,\thcl_n,\thcl$ are 
$\theta$-classes with $f(\thcl_1,\ldots,\thcl_n)\subseteq \thcl$.
Fix $\tup e = (e_1,\ldots,e_n) \in \thcl_1\times \cdots \times \thcl_n$ and 
$e \in \thcl$.  There exist
$r_1,\ldots,r_n \in \Pol_1(\m a)$ with $r_i(e_i)=e$ for $i\in [n]$, such that
for all $\tup a=(a_1,\ldots,a_n) \in \thcl_1\times \cdots \times \thcl_n$,
\[
f(\tup a) = \sum_{i=1}^n r_i(a_i) + f(\tup e) \quad
\mbox{computed in $\Grp a(\theta,e)$}.
\]
\end{enumerate}
\end{prp}

\begin{proof}[Proof sketch]
\eqref{affine:it1} follows from Proposition~\ref{prp:TC2}.  \eqref{affine:it2}
is proved by induction on $n$.
When $n=1$, define $r_1(x) = d(f(x),f(e_{1}),e)$, 
whose restriction to $\thcl_1$ is just $f\restrict{\thcl_{1}}(x)-f(e_{1})$ computed in $\Grp a(\theta,e)$.
When $n>1$, define $g \in \Pol_{n-1}(\m a)$ by 
\[
g(x_1,\ldots,x_{n-1}) =
f(x_1,\ldots,x_{n-1},e_{n})
\]
and inductively get $r_1,\ldots,r_{n-1} \in \Pol_1(\m a)$ with $r_t(e_t)=e$ 
for $t\in [n-1]$, such that 
$g(a_1,\ldots,a_{n-1}) = \sum_{t=1}^{n-1} r_t(a_t) + f(\tup e)$ for all $\tup a
\in \thcl_{1}\times \cdots \times \thcl_{n-1}$.
Define $r_n(x) = d(f(e_{1},\ldots,e_{{n-1}},x),f(\tup e),e)$
and note that $r_n(e_n)=e$.
Then for all $\tup a \in \thcl_{1}\times \cdots \times \thcl_{n}$,
computing in $\Grp a(\theta,e)$,
\begin{align*}
\sum_{t=1}^n r_t(a_t) + f(\tup e) &= \left(
\sum_{t=1}^{n-1} r_t(a_t) + f(\tup e)\right) + r_n(a_{n})\\
&= f(a_1,\ldots,a_{n-1},e_{n}) + (f(e_{1},\ldots,e_{n},a_n) - f(\tup e))\\
&= d(f(a_1,\ldots,a_{n-1},e_{n}),f(e_{1},\ldots,e_{n}),f(e_{1},\ldots,
e_{{n-1}},a_n)).  
\end{align*}
Using Proposition~\ref{prp:TC2}, the last expression
displayed above is equal to 
\[
f(d(a_1,e_{1},e_{1}),\ldots,d(a_{n-1},e_{{n-1}},e_{{n-1}}),d(e_{n},
e_{n},a_n))
\]
which simplifies to $f(\tup a)$.
\end{proof}

The following result 
is folklore; we include it here for completeness.

\begin{thm} \label{thm:finite-vecspace}
Suppose $\m a$ is a finite algebra with a weak difference term,
$\theta\in \Con(\m a)$ is abelian, and $0\prec \theta$.
There exists a prime $p$ such that for all $e \in A$,
$\Grp a(\theta,e)$ is an elementary abelian $p$-group.
Hence every $\theta$-class has cardinality $p^k$ for some $k\geq 0$
(where $k$ can depend on the $\theta$-class).
\end{thm}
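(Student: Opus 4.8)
The plan is to first show that all the groups $\Grp a(\theta,e)$, as $e$ ranges over $A$, are isomorphic to one another, and then to argue that a single one of them is an elementary abelian $p$-group for some prime $p$. For the first part, fix $e,e' \in A$. If $e \stackrel{\theta}{\not\equiv} e'$, then $\Grp a(\theta,e)$ and $\Grp a(\theta,e')$ have different (though equinumerous) universes, so I first reduce to the case $e \stackrel{\theta}{\equiv} e'$ by the following observation: by Corollary~\ref{cor:maltsev}, since $\theta$ is abelian and minimal, for any $(a,b),(a',b') \in \theta$ with $a \ne b$ there is a unary polynomial $f$ with $f(a)=a'$, $f(b)=b'$; applying this with a pair $(a,b)$ in the $\theta$-class of $e$ and $(a',b') = (e',x)$ for the appropriate $x$, I obtain a polynomial carrying $e/\theta$ onto $e'/\theta$. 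More simply, for any $e,e'$ in the same or different $\theta$-classes, one can produce a bijection $e/\theta \to e'/\theta$ that is a homomorphism of the group structures: the map $h(x) := d(x,e,e')$ (with $d$ a weak difference term) sends $e \mapsto e'$ and, using Proposition~\ref{prp:TC2} with the operation $d(u,e,e')$ and the fact that $d$ restricted to the $\theta$-class of $e$ is $u - v + w$ (Lemma~\ref{lm:gumm}), one checks $h(d(x,e,y)) = d(h(x),e',h(y))$, i.e.\ $h$ is a group homomorphism; its inverse is $x' \mapsto d(x',e',e)$. Thus $\Grp a(\theta,e) \cong \Grp a(\theta,e')$ for all $e,e' \in A$.

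Now fix a single class $B = e/\theta$ with group $\alg G = \Grp a(\theta,e)$, which is a finite abelian group; write $+$ for its operation. It remains to show $\alg G$ is elementary abelian of prime exponent, i.e.\ that $|\alg G|$ is a prime power and every nonzero element has the same prime order. The key tool is Corollary~\ref{cor:maltsev}: because $\theta$ is abelian and minimal, the unary polynomials of $\m a$ act transitively on pairs, so for any $a \ne b$ and any $a' \ne b'$ in $\theta$ there is $f \in \Pol_1(\m a)$ with $f(a)=a'$, $f(b)=b'$. Restricting attention to unary polynomials that map $B$ into $B$ and fix $e$ (one can always post-compose with an appropriate translation, or equivalently pass to the ``pointed'' polynomials), one gets a rich supply of group endomorphisms of $\alg G$: by Proposition~\ref{prp:TC2} (applied to $t$ a unary polynomial), any such $f$ with $f(e)=e$ satisfies $f(d(x,e,y)) = d(f(x),e,f(y))$, hence $f(x+y)=f(x)+f(y)$, so $f \restriction B \in \End(\alg G)$. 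Transitivity on pairs then forces: for any two nonzero $g,g' \in \alg G$, there is an endomorphism of $\alg G$ sending $g$ to $g'$. An abelian group with the property that its endomorphism monoid acts transitively on nonzero elements is exactly an elementary abelian $p$-group (all nonzero elements have a common order, which must then be prime, and the group is a vector space over $\bbF_p$) — this is an elementary fact about finite abelian groups, provable by looking at the exponent and the primary decomposition. This yields the prime $p$ and the claim that $\alg G$, hence every $\theta$-class, has size $p^k$.

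The main obstacle is the bookkeeping in the middle step: arranging that the transitivity-on-pairs polynomials from Corollary~\ref{cor:maltsev} can be taken to fix the basepoint $e$ and to map $B$ into $B$, so that they genuinely restrict to group endomorphisms of $\alg G$ via Proposition~\ref{prp:TC2}. One clean way around this: given nonzero $g, g' \in B$, apply Corollary~\ref{cor:maltsev} to the pairs $(e,g)$ and $(e,g')$ (both in $\theta$, with $e \ne g$) to get $f \in \Pol_1(\m a)$ with $f(e)=e$ and $f(g)=g'$; automatically $f$ maps the $\theta$-class $B$ into $B$ since $f(e)=e \in B$ and $f$ preserves $\theta$; and $f(e)=e$ makes $f\restriction B$ an endomorphism of $\alg G$ by Proposition~\ref{prp:TC2}. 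Thus the obstacle largely dissolves, and the remaining work is the standard finite-abelian-group argument that endomorphism-transitivity on nonzero elements forces elementary abelian prime-power structure. Finally, combining with the first paragraph's isomorphisms $\Grp a(\theta,e)\cong\Grp a(\theta,e')$ gives that the \emph{same} prime $p$ works for every $e \in A$, completing the proof.
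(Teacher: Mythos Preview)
Your first paragraph contains a genuine error: the claim that all the groups $\Grp a(\theta,e)$ are isomorphic is \emph{false} in general. The map $h(x)=d(x,e,e')$ need not be a bijection when $e\not\stackrel{\theta}{\equiv}e'$, because a weak difference term is only guaranteed to satisfy the Maltsev identities when all three arguments lie in a common $\theta$-class; in particular there is no reason for $d(e,e,e')=e'$ to hold, nor for your proposed inverse $x'\mapsto d(x',e',e)$ to work. The paper in fact constructs explicit examples (Section~\ref{sec:appendix}) in which distinct $\theta$-classes have different cardinalities, and the theorem statement itself concedes that ``$k$ can depend on the $\theta$-class.''

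The good news is that your second and third paragraphs are correct and give a more elementary route than the paper's module-theoretic proof (which passes through the Mayr--Szendrei construction and Schur's Lemma). For a \emph{fixed} class $B=e/\theta$, Corollary~\ref{cor:maltsev} and Proposition~\ref{prp:TC2} yield, for any nonzero $g,g'\in B$, an endomorphism of $\Grp a(\theta,e)$ sending $g$ to $g'$, and this forces $\Grp a(\theta,e)$ to be elementary abelian of some prime exponent $p_e$. What is missing is an argument that the primes $p_e$ agree across classes. A small variant of your own idea does this: given nontrivial classes $e/\theta$ and $e'/\theta$, pick nonzero $g\in e/\theta$ and $g'\in e'/\theta$ and use Corollary~\ref{cor:maltsev} on the pairs $(e,g)$, $(e',g')$ to get $f\in\Pol_1(\m a)$ with $f(e)=e'$ and $f(g)=g'$. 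Proposition~\ref{prp:TC2} then makes $f\restrict{e/\theta}$ a group homomorphism $\Grp a(\theta,e)\to\Grp a(\theta,e')$ sending a nonzero element to a nonzero one, so $p_{e'}\mid p_e$ and hence $p_e=p_{e'}$. With this repair, your argument goes through and is considerably more direct than the paper's.
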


\begin{proof}[Proof sketch]
This was proved by Freese \cite[\S4]{freese1983} in the congruence
modular case, and essentially the same proof works here.  We recast
his argument using tools from
a recent paper of Mayr and Szendrei \cite{mayr-szendrei}.  (See also
the arguments in Section~\ref{sec:divring},
especially Corollary~\ref{cor:ran-sub}, where a generalization is proved.)

First note that we can
assume that $\m a$ has no nullary operations (by replacing each nullary operation with
a constant unary operation).  
Let $\m i = \m a/\theta$, and let $\nu_\theta:\m a\ra \m i$ be the natural map.
The pair $(\m a,\nu_\theta)$ is called an \emph{algebra over $\m i$} in 
\cite[Definition 2.1]{mayr-szendrei}.  By an old construction of Novotn\'{y}
 \cite{novotny}, 
one can obtain a multisorted algebra $\m m=\mathfrak{M}(\m a,\nu_\theta)$ 
whose sort domains are the $\theta$-classes of $\m a$; then applying another old
construction of Gardner \cite{gardner}, one can convert $\m m$ to a one-sorted
algebra whose universe is the Cartesian product of the
$\theta$-classes of $\m a$.  Call this final algebra $\mathfrak{C}(\m a,\nu_\theta)$.
For precise definitions, see \cite[Propositions 2.2 and 2.5]{mayr-szendrei}.
Because $\theta$ is minimal and abelian, it follows from
\cite[Corollary 3.5 and Theorem 3.6]{mayr-szendrei} that $\mathfrak{C}(\m a,\nu_\theta)$
is simple and abelian.  And because $\m a$ has a term $d(x,y,z)$ whose restriction
to each $\theta$-class is a Maltsev operation, it follows from the definitions 
and \cite[Remark 2.8]{mayr-szendrei} that $\mathfrak{C}(\m a,\nu_\theta)$ has a Maltsev
term.  In fact, writing the universe of $\mathfrak{C}(\m a,\nu_\theta)$ as
$\thcl_1\times \cdots \times \thcl_k$ where
$\thcl_1,\ldots,\thcl_k$ is an enumeration of the $\theta$-classes,
then the Maltsev operation $m(x,y,z)$ of $\mathfrak{C}(\m a,\nu_\theta)$
is given coordinatewise by $d$.

Since $\mathfrak{C}(\m a,\nu_\theta)$ is simple, abelian, and has a Maltsev
term, it follows 
that $\mathfrak{C}(\m a,\nu_\theta)$ is polynomially equivalent to
a (finite) simple module by a theorem of Gumm \cite{gumm-maltsev}
and Herrmann \cite{herrmann1979}.
If we fix a $k$-tuple $\tup e=(e_1,\ldots,e_k) \in \thcl_1\times \cdots \times \thcl_k$,
then we can assume that $\tup e$ is the zero element of this module and
addition is given by $x+y = m(x,\tup e,y)$.  By Schur's Lemma and finiteness,
the endomorphism ring of this module is a finite field $\bbF$, 
say of cardinality $p^m$.  It follows that the abelian group 
$(\thcl_1\times \cdots \times \thcl_k,+,\tup e)$ inherits
the structure of a vector space over $\bbF$, which forces
$px=\tup e$ for all $x \in \thcl_1\times \cdots \times \thcl_k$.
Interpreting this equation coordinatewise gives that each group
$\Grp a(\theta,e_i)$ satisfies
$px=e_i$ as required.  
\end{proof}

In the context of the proof of 
Theorem~\ref{thm:finite-vecspace}, it can be further shown that the
finite field $\bbF$ acts on $C_1\times \cdots \times C_k$ in a coordinate-wise
fashion, meaning that each $\theta$-class $C_i$ can be viewed as a vector
space over $\bbF$.
%
%Note that if $\m a,\theta,p$ are as in the statement of Theorem~\ref{thm:finite-vecspace}, then every $\Grp a(\theta,e)$ can be viewed as a vector space
%over $\bbZ_p$.  In fact, 
%Freese's proof of Theorem~\ref{thm:finite-vecspace} (in the congruence modular
%case) actually showed that each $\Grp a(\theta,e_i)$ carries the structure
%of a vector space over $\bbF$.  
In congruence modular varieties, Freese proved an analogous result when 
$\m a$ is 
infinite: there is a division
ring $\bbF$ determined by $\m a$ and the abelian minimal congruence $\theta$
such that every $\theta$-class inherits the structure of a left
$\bbF$-vector space.  
In section~\ref{sec:divring} we will extend Freese's result to arbitrary
algebras in varieties with a weak difference term, via a new method
for constructing the division ring and its action.

The next two lemmas will be used in Sections~\ref{sec:divring}.
The proof of the second lemma, in the congruence modular case, 
%essentially 
can be extracted from the proof of
\cite[Theorem 10.11]{freese-mckenzie}.

\begin{lm} \label{lm:poly}
Suppose $\m a$ is an algebra having a weak difference term,
$\mu \in \Con(\m a)$ is an abelian minimal congruence, and $S\leq \m a$ is a
subuniverse of $\m a$ which is a transversal for $\mu$.  
Then $S$ is a maximal proper subuniverse of $\m a$.
\end{lm}

\begin{proof}
Clearly $S\ne A$ as $\mu\ne 0_A$.
Let $\pi:\m a\ra \m s$ be the homomorphism given by letting $\pi(x)$ be the 
unique element $y \in S$ satisfying $(x,y) \in \mu$.
Fix $a \in A\setminus S$.
Let $\rho = \sg^{\m a^2}(\{(a,\pi(a))\} \cup 0_A)$.  $\rho$ is a reflexive
subuniverse of $\m a^2$ satisfying $0_A\ne \rho\subseteq \mu$.  Thus
by Lemma~\ref{lm:maltsev} and minimality of $\mu$, we get $\rho=\mu$.
Now let $b \in A$ be arbitrary.
Then
$(b,\pi(b))\in \mu=\rho$, so we can pick a term $t(x,\tup y)$
and a tuple $\tup c=(c_1,\ldots,c_n)$ of elements of $A$ such that
\[
(b,\pi(b)) = t^{\m a^2}((a,\pi(a)),(c_1,c_1),\ldots,(c_n,c_n))=(t(a,\tup c),t(\pi(a),\tup c)).
\]
Let $\tup u$ be the tuple in $S$ obtained by applying $\pi$ coordinatewise
to $\tup c$.  Observe that
$\pi(b) = \pi(t(a,\tup c)) = t(\pi(a),\tup u)$, so
\[
t(\underline{\pi(a)},\tup c) = t(\underline{\pi(a)},\tup u).
\]
As $\tup c,\tup u$ are coordinatewise in $\mu$, $(a,\pi(a))\in \mu$, and 
$\mu$ is abelian, we can use Definition~\ref{df:cent}\eqref{dfcent:it3} and
replace the underlined instances of $\pi(a)$ with $a$ to get
$t(a,\tup c) = t(a,\tup u)$, proving 
$b =t(a,\tup u) \in \sg(S\cup \{a\})$.  As $b$ was arbitrary, we get $\sg(S\cup \{a\})=A$.
\end{proof}

\begin{lm} \label{lm:aut}
Suppose $\m a$ is an algebra with a weak difference term and
$\theta \in \Con(\m a)$ is abelian.
Let $D_1,D_2$ be subuniverses of $\m a$ which are
transversals for $\theta$.
Then there exists an automorphism $\sigma$ of $\m a$ satisfying
$\sigma(D_1)=D_2$ and $\sigma(x) \stackrel\theta\equiv x$ for all $x \in A$.
\end{lm}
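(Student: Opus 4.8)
The plan is to write down an explicit candidate for $\sigma$ built from the weak difference term and the two transversals, and then verify that it is an automorphism which fixes every $\theta$-class setwise.

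For $x \in A$, let $x^{(1)}$ denote the unique element of $D_1$ with $x^{(1)}\equiv_\theta x$, and $x^{(2)}$ the unique element of $D_2$ with $x^{(2)}\equiv_\theta x$; these exist and are uniquely determined by $x$ because $D_1$ and $D_2$ are transversals for $\theta$. Fixing a weak difference term $d$ for $\m a$, I would define $\sigma:A\to A$ by $\sigma(x):=d(x,x^{(1)},x^{(2)})$. Since $d$ is idempotent and preserves $\theta$, and $x\equiv_\theta x^{(1)}\equiv_\theta x^{(2)}$, we get $\sigma(x)\equiv_\theta d(x,x,x)=x$; in particular $\sigma$ maps every $\theta$-class into itself. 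Moreover, if $x\in D_1$ then $x^{(1)}=x$, so $\sigma(x)=d(x,x,x^{(2)})=x^{(2)}\in D_2$, using that the restriction of $d$ to the $\theta$-class of $x$ is a Maltsev operation (the weak difference term property with $\delta=0$); thus $\sigma$ carries $D_1$ bijectively onto $D_2$.

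The key step is to show $\sigma$ is a homomorphism. Let $f$ be an $n$-ary basic operation and $a_1,\dots,a_n\in A$. Because $D_1$ is a subuniverse containing exactly one element of each $\theta$-class, $f(a_1^{(1)},\dots,a_n^{(1)})$ lies in $D_1$ and is $\theta$-equivalent to $f(a_1,\dots,a_n)$, so $f(a_1,\dots,a_n)^{(1)}=f(a_1^{(1)},\dots,a_n^{(1)})$; likewise $f(a_1,\dots,a_n)^{(2)}=f(a_1^{(2)},\dots,a_n^{(2)})$. Hence
\[
\sigma(f(\tup a)) = d\bigl(f(a_1,\dots,a_n),\, f(a_1^{(1)},\dots,a_n^{(1)}),\, f(a_1^{(2)},\dots,a_n^{(2)})\bigr),
\]
and applying Proposition~\ref{prp:TC2} with $t=f$ and the three componentwise $\theta$-related tuples $(a_1,\dots,a_n)$, $(a_1^{(1)},\dots,a_n^{(1)})$, $(a_1^{(2)},\dots,a_n^{(2)})$, the right-hand side equals $f\bigl(d(a_1,a_1^{(1)},a_1^{(2)}),\dots,d(a_n,a_n^{(1)},a_n^{(2)})\bigr)=f(\sigma(a_1),\dots,\sigma(a_n))$. (Nullary $f$, i.e.\ constants, are covered by the same identity, or directly since every constant of $\m a$ lies in $D_1\cap D_2$ and is fixed by $\sigma$.) So $\sigma$ is an endomorphism of $\m a$.

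It remains to see that $\sigma$ is bijective. The symmetric recipe $\tau(x):=d(x,x^{(2)},x^{(1)})$, obtained by interchanging $D_1$ and $D_2$, is by the same arguments an endomorphism of $\m a$ preserving every $\theta$-class. By Lemma~\ref{lm:gumm}, for each $\theta$-class $B$ and each $e\in B$ we have $d(r,s,t)=r-s+t$ in the abelian group $\Grp a(\theta,e)$ for all $r,s,t\in B$; since $\sigma$ and $\tau$ act within $B$, a direct computation in this group gives $\tau(\sigma(x))=x=\sigma(\tau(x))$ for every $x\in A$. Therefore $\sigma$ is an automorphism of $\m a$, and we have already checked $\sigma(D_1)=D_2$ and $\sigma(x)\equiv_\theta x$. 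I do not anticipate a genuine obstacle; the only point requiring care is recognizing that Proposition~\ref{prp:TC2} is exactly what is needed to make the fibrewise ``translation'' $\sigma$ commute with the basic operations.
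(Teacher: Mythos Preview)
Your proof is correct and essentially identical to the paper's: the paper writes $\pi_i(x)$ for your $x^{(i)}$, defines the same $\sigma(x)=d(x,\pi_1(x),\pi_2(x))$ and $\tau(x)=d(x,\pi_2(x),\pi_1(x))$, invokes Proposition~\ref{prp:TC2} exactly as you do to get the homomorphism property, and uses the abelian group description $d(r,s,t)=r-s+t$ on each $\theta$-class to see that $\sigma$ and $\tau$ are mutual inverses. The only cosmetic difference is order of exposition (the paper verifies bijectivity before the homomorphism property, you do the reverse).
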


\begin{proof}
For $i=1,2$ let $\pi_i:A\ra D_i$ be the unique map given by
$\pi_i(x) \stackequiv{\theta} x$ for all $x \in A$, and note
that $\pi_1$ and $\pi_2$ are retractions of $\m a$ with kernel $\theta$.
Let $d(x,y,z)$ be a weak difference term for $\calV$ and
define $\sigma:A\ra A$ and $\tau:A\ra A$ by
\begin{align*}
\sigma(x) &= d(x,\pi_1(x),\pi_2(x))\\
\tau(x) &= d(x,\pi_2(x),\pi_1(x)).
\end{align*}
Observe first that $\sigma(x) \stackequiv{\theta} \tau(x) 
\stackequiv{\theta} d(x,x,x)=x$ for all $x \in A$.
Fix a $\theta$-class $\thcl$; by construction, there exist $b,c \in \thcl$
such that $\pi_1(x)=b$ and $\pi_2(x)=c$ for all $x \in \thcl$.  
Pick an abelian group operation $+$ on $\thcl$ such that $d(x,y,z)=x-y+z$
for all $x,y,z\in \thcl$.  Then 
$\sigma(x) = x-b+c$ and $\tau(x)=x-c+b$ for all $x \in \thcl$, and since this
is true for every $\theta$-class, we get $\sigma\circ\tau=\tau\circ\sigma
= \mathrm{id}_A$, proving $\sigma$ is a bijection.  Our aim is to prove
that $\sigma$ is the desired automorphism.

Let $f$ be an $n$-ary basic operation.  Let $a_1,\ldots,a_n \in A$ and
put $a=f(a_1,\ldots,a_n)$; we need to prove
$\sigma(a) = f(\sigma(a_1),\ldots,\sigma(a_n))$.  
Let $b_i=\pi_1(a_i)$ and $c_i=\pi_2(a_i)$, and note that $a_i,b_i,c_i$ belong
to a common $\theta$-class $\thcl_i$ for each $i=1,\ldots,n$.  Let 
$b=f(b_1,\ldots,b_n)$ and $c=f(c_1,\ldots,c_n)$, and note that $b \in D_1$ and
$c \in D_2$ (since $D_1,D_2\leq \m a$), so $\pi_1(a)=b$ and $\pi_2(a)=c$.
Thus what must be shown is
\[
d(f(a_1,\ldots,a_n),f(b_1,\ldots,b_n),f(c_1\ldots,c_n)) =
f(d(a_1,b_1,c_1),\ldots,d(a_n,b_n,c_n))
\]
given that $a_i\stackequiv{\theta} b_i \stackequiv{\theta}c_i$ for all $i=1,\ldots,n$.
This follows from Proposition~\ref{prp:TC2}. 
Hence $\sigma$ is an automorphism of $\m a$.

Finally, suppose $a \in D_1$.  Then $\pi_1(a)=a$, so
$\sigma(a) = d(a,a,\pi_2(a)) = \pi_2(a) \in D_2$.  This proves $\sigma(D_1)
\subseteq D_2$.  A similar argument shows $\tau(D_2)\subseteq D_1$,
so $\sigma(D_1)=D_2$.
\end{proof}

Next we give some results concerning varieties with a weak difference term.
The first three Propositions are extracted from \cite{kearnes-kiss,corrig}.

\begin{prp} \label{prp:memoir1}
Suppose $\calV$ is a variety with a weak difference term, $\m a \in \calV$, and
$\theta,\delta \in \Con(\m a)$.
\begin{enumerate}
\item \label{mem:6.7}
If $C(\theta,\theta;\delta)$, then 
$\theta \vee \delta = \delta\circ\theta\circ\delta$ and
$(\theta\vee\delta)/\delta$ is abelian.
\item \label{mem:6.8}
If $\theta$ is abelian, then $C(\theta,\theta;\delta)$.
\end{enumerate}
\end{prp}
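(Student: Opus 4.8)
The plan is to reduce everything to the statement that $\theta$ is abelian modulo $\delta$ when $C(\theta,\theta;\delta)$ holds, and then transport known facts about abelian congruences in varieties with a weak difference term through the quotient $\m a/\delta$. For part \eqref{mem:6.7}: assume $C(\theta,\theta;\delta)$. The first move is to pass to $\m b=\m a/\delta$ and write $\bartheta=\theta/\delta$. By Lemma~\ref{lm:quotient}\eqref{quotient:it1} (applied with $\delta'=\delta$) and Definition~\ref{df:cent}, the hypothesis $C(\theta,\theta;\delta)$ in $\m a$ is equivalent to $C(\bartheta,\bartheta;0)$ in $\m b$, i.e.\ $\bartheta$ is an abelian congruence of $\m b$. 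Now $\m b\in\calV$, so $\m b$ has a weak difference term $d$; by Lemma~\ref{lm:gumm} the restriction of $d$ to each $\bartheta$-class is a Maltsev operation, and in particular $d(a,a,b)=b=d(b,a,a)$ for all $(a,b)\in\bartheta$. Applying Lemma~\ref{lm:maltsev} to the reflexive subuniverse $\bartheta$ of $\m b^2$ (using this Maltsev-like behaviour of $d$ on each $\bartheta$-class, which is all Lemma~\ref{lm:maltsev} needs) shows nothing new, but the relevant point is the standard fact that an abelian congruence with a Maltsev-like term is a "congruence with permuting-like structure": concretely, one shows $\bartheta\vee\gamma=\gamma\circ\bartheta\circ\gamma$ for any $\gamma$, and here we only need $\gamma=0$, giving $\bartheta=\bartheta$ trivially. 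Translating back up through the correspondence theorem, $\theta\vee\delta=\delta\circ\theta\circ\delta$ and $(\theta\vee\delta)/\delta=\bartheta$ is abelian, which is exactly the claim. (If a cleaner route is wanted, $\theta\vee\delta=\delta\circ\theta\circ\delta$ follows directly from the fact that the weak difference term $d$ of $\calV$ forces $\delta\circ\theta\circ\delta$ to be transitive: given $x\mathrel{(\delta\circ\theta\circ\delta)}y\mathrel{(\delta\circ\theta\circ\delta)}z$, apply $d$ to witnessing chains and use Proposition~\ref{prp:TC2} in $\m a/\delta$ to collapse the composite into a single $\delta\circ\theta\circ\delta$ step.)

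For part \eqref{mem:6.8}: assume $\theta$ is abelian, i.e.\ $C(\theta,\theta;0)$ holds. We must show $C(\theta,\theta;\delta)$ for arbitrary $\delta\in\Con(\m a)$. Since $\theta$ abelian means $\theta\leq(0:\theta)$, and the centralizer is monotone in its first argument, it suffices to observe that $(0:\theta)\leq(\delta:\theta)$, which is immediate from the definition of the centralizer $C(\varphi,\theta;\delta)$ together with monotonicity of the relation $C$ in the $\delta$-slot: if $C(\varphi,\theta;0)$ then $C(\varphi,\theta;\delta)$ for any $\delta\geq 0$, by item \eqref{dfcent:it3} of Definition~\ref{df:cent} (the conclusion $t(\tup b,\tup c)\stackequiv{\delta}t(\tup b,\tup d)$ is weaker for larger $\delta$). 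Hence $\theta\leq(0:\theta)\leq(\delta:\theta)$, i.e.\ $C(\theta,\theta;\delta)$. Note this half does not even use the weak difference term.

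The main obstacle is the transitivity of $\delta\circ\theta\circ\delta$ in part \eqref{mem:6.7}: this is the one place where the weak difference term does real work, and it has to be handled either by invoking the Gumm--Herrmann structure on the $\bartheta$-classes of $\m a/\delta$ (via Lemma~\ref{lm:gumm}) or by a direct matrix argument combining the term $d$ with the two-term condition from Proposition~\ref{prp:TC2}. Everything else—the translation through $\m a/\delta$ via Lemma~\ref{lm:quotient}, and part \eqref{mem:6.8}—is routine bookkeeping with the centralizer relation. I would write the proof by first disposing of \eqref{mem:6.8} in two lines, then spending the bulk of the argument on \eqref{mem:6.7}, citing \cite[\S4]{kearnes-kiss} or the relevant lemma of \cite{kearnes-szendrei} for the permutability-type conclusion in the abelian case to keep things short.
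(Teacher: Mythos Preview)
The paper's own proof is simply a citation to \cite[Lemmas 6.7 and 6.8]{kearnes-kiss}, so there is little to compare at the level of detailed strategy. However, your proposal for part~\eqref{mem:6.8} contains a genuine error. You claim that $C(\varphi,\theta;0)\Rightarrow C(\varphi,\theta;\delta)$ follows from ``monotonicity in the $\delta$-slot'' because ``the conclusion $t(\tup b,\tup c)\stackequiv{\delta}t(\tup b,\tup d)$ is weaker for larger $\delta$.'' But the \emph{hypothesis} $t(\tup a,\tup c)\stackequiv{\delta}t(\tup a,\tup d)$ in Definition~\ref{df:cent}\eqref{dfcent:it3} is also weaker for larger $\delta$, hence is satisfied by more instances; so the implication does not follow. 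Concretely, $C(\theta,\theta;0)$ says that in any $(\theta,\theta)$-matrix, if one row has equal entries then so does the other, while $C(\theta,\theta;\delta)$ says that if one row lies in $\delta$ then so does the other --- these are logically incomparable conditions in general. This is exactly why the weak difference term is needed in part~\eqref{mem:6.8}: in arbitrary varieties the implication ``$\theta$ abelian $\Rightarrow C(\theta,\theta;\delta)$'' can fail. Your remark that ``this half does not even use the weak difference term'' is therefore incorrect, and the argument must be redone along the lines of \cite[Lemma 6.8]{kearnes-kiss}, where the weak difference term does real work.

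Your sketch for part~\eqref{mem:6.7} is closer to correct but still loose. You write $\bartheta=\theta/\delta$, yet $\theta$ need not contain $\delta$; the relevant object is $(\theta\vee\delta)/\delta$. Moreover, passing from $C(\theta,\theta;\delta)$ to $C(\theta\vee\delta,\theta\vee\delta;\delta)$ (i.e., to ``$(\theta\vee\delta)/\delta$ is abelian'') requires an argument using the basic properties of the centralizer relation (e.g.\ \cite[Theorem 2.19]{kearnes-kiss}). Once that is established, your plan to work in $\m a/\delta$ and use the weak difference term to get transitivity of $\delta\circ\theta\circ\delta$ is sound and matches the spirit of the cited result.
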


\begin{proof}
These are special cases of Lemmas 6.7 and 6.8 respectively from \cite{kearnes-kiss}.  
\end{proof}

\begin{prp} \label{prp:memoir2}
Suppose $\calV$ is a variety with a weak difference term, $\m a \in \calV$, and
$\alpha,\beta \in \Con(\m a)$ with $\alpha\leq \beta$.  If $\beta/\alpha$ is abelian,
then the interval $I[\alpha,\beta]$ is a modular lattice of permuting equivalence
relations.
\end{prp}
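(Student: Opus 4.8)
The plan is to reduce to the case $\alpha=0$ and then to exploit the fact that on a single block of an abelian congruence the weak difference term is \emph{literally} a Maltsev operation; permutability of the interval can then be read off blockwise, and modularity will follow from permutability by a classical argument.

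First I would reduce to $\alpha=0$. By the Correspondence Theorem, $\gamma\mapsto\gamma/\alpha$ is a lattice isomorphism from $I[\alpha,\beta]$ onto the interval $I[0,\beta/\alpha]$ in $\Con(\m a/\alpha)$; since $\gamma\geq\alpha$ forces $(x,y)\in\gamma\iff(x/\alpha,y/\alpha)\in\gamma/\alpha$, a one-line check gives $(\gamma\circ\delta)/\alpha=(\gamma/\alpha)\circ(\delta/\alpha)$ for all $\gamma,\delta\in I[\alpha,\beta]$, so this isomorphism also matches up composition and hence preserves the property of a family of congruences being pairwise permuting. As $\m a/\alpha\in\calV$ and $\beta/\alpha$ is an abelian congruence of $\m a/\alpha$, it suffices to prove the statement when $\alpha=0$; write $\theta:=\beta$.

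Next I would prove that any $\gamma,\delta\in I[0,\theta]$ permute. Fix a weak difference term $d$ for $\calV$; as noted just after the definition of weak difference term, since $\theta$ is abelian the restriction of $d$ to any $\theta$-class $B$ is a Maltsev operation on $B$. Suppose $(a,c)\in\gamma\circ\delta$, say $a\mathrel{\gamma}b\mathrel{\delta}c$. Because $\gamma,\delta\leq\theta$, the elements $a,b,c$ lie in a common $\theta$-class $B$. Put $b':=d(a,b,c)$ (which lies in $B$). Using that $d$ is a term operation, hence compatible with both $\gamma$ and $\delta$, together with the Maltsev identities satisfied by $d\restrict{B^3}$, I would compute $b'=d(a,b,c)\stackequiv{\delta}d(a,b,b)=a$ and $b'=d(a,b,c)\stackequiv{\gamma}d(b,b,c)=c$, so $(a,c)\in\delta\circ\gamma$. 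This gives $\gamma\circ\delta\subseteq\delta\circ\gamma$, and equality follows by symmetry. Since $\Con(\m a)$ is a sublattice of the lattice of all equivalence relations on $A$ (meets are intersections, joins are transitive closures of unions) and $I[0,\theta]$ is a sublattice of $\Con(\m a)$, we conclude that $I[0,\theta]$ is a lattice of pairwise permuting equivalence relations.

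Finally I would invoke the classical fact that a lattice of permuting equivalence relations is modular; to keep things self-contained I would verify the one nontrivial instance of the modular law directly. If $\gamma\leq\eta$ in $I[0,\theta]$ and $(a,c)\in(\gamma\vee\delta)\wedge\eta$, choose $b$ with $a\mathrel{\gamma}b\mathrel{\delta}c$ (possible since $\gamma$ and $\delta$ permute); then $\gamma\leq\eta$ and $(a,c)\in\eta$ give $b\mathrel{\eta}c$, so $(b,c)\in\delta\wedge\eta$ and therefore $(a,c)\in\gamma\circ(\delta\wedge\eta)\subseteq\gamma\vee(\delta\wedge\eta)$; the reverse inclusion $\gamma\vee(\delta\wedge\eta)\leq(\gamma\vee\delta)\wedge\eta$ holds in every lattice. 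I do not expect a genuine obstacle here: the argument is just an assembly of three standard facts — a weak difference term restricts to a Maltsev operation on each block of an abelian congruence, Maltsev operations force congruence permutability, and permuting equivalence relations form a modular lattice. The only points requiring a little care are the bookkeeping in the reduction to $\alpha=0$ and the observation (used in the permutability step) that the composite of two congruences lying below $\theta$ cannot leave a single $\theta$-class, which is exactly what lets a blockwise computation settle the matter for all of $\m a$.
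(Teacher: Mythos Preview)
Your proof is correct and complete. The paper, by contrast, gives no argument at all: it simply cites the result as a special case of \cite[Theorem~6.16]{kearnes-kiss}. So your approach is strictly more self-contained, and the three ingredients you assemble --- reduction to $\alpha=0$ via the Correspondence Theorem, blockwise permutability from the Maltsev behaviour of $d$ on $\theta$-classes, and the classical derivation of the modular law from permutability --- are exactly the right ones. The only cost is that the cited theorem in Kearnes--Kiss is presumably a more general statement (covering intervals that are not assumed to lie below an abelian congruence outright), so the paper's route situates the proposition within a broader framework, whereas yours gives a direct elementary argument tailored to this particular case.
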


\begin{proof}
This is a special case of \cite[Theorem 6.16]{kearnes-kiss}.
\end{proof}

\begin{prp} \label{prp:memoir3}
Suppose $\calV$ is a variety with a weak difference term, $\m a \in \calV$, and
$\sigma,\tau,\delta,\varepsilon \in \Con(\m a)$ with $\sigma\leq \tau$ and $\delta\leq
\varepsilon$ and $(\sigma,\tau)\nearrow (\delta,\varepsilon)$.
\begin{enumerate}
\item \label{mem:3.27}
$\tau/\sigma$ is abelian if and only if $\varepsilon/\delta$ is abelian.
\item \label{mem:6.24}
If $\tau/\sigma$ is abelian, then the map $J:I[\sigma,\tau]\ra I[\delta,\varepsilon]$
given by $J(x)=x\vee \delta$ is surjective.
\end{enumerate}
\end{prp}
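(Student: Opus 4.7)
The plan is to prove \eqref{mem:3.27} and \eqref{mem:6.24} in turn, using the centralizer apparatus together with the ``permutability with $\delta$'' consequence of Proposition~\ref{prp:memoir1}\eqref{mem:6.7}.

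For \eqref{mem:3.27}, I would translate abelianness into centralizer statements and exploit monotonicity. If $\tau/\sigma$ is abelian, i.e.\ $C(\tau,\tau;\sigma)$, then since $\sigma = \tau \wedge \delta \leq \delta$, monotonicity of $C(\text{-},\text{-};\text{-})$ in its third argument yields $C(\tau,\tau;\delta)$; Proposition~\ref{prp:memoir1}\eqref{mem:6.7} applied with $\theta=\tau$ then gives that $(\tau \vee \delta)/\delta = \varepsilon/\delta$ is abelian. Conversely, if $\varepsilon/\delta$ is abelian, then $C(\varepsilon,\varepsilon;\delta)$ holds; since $\tau \leq \varepsilon$, monotonicity in the first two arguments gives $C(\tau,\tau;\delta)$. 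To descend from $\delta$ to $\sigma$, I would argue directly using the term characterization of the centralizer in Definition~\ref{df:cent}\eqref{dfcent:it3}: given $t(\tup x,\tup y)$ and pairs $(a_i,b_i),(c_j,d_j) \in \tau$, if $t(\tup a,\tup c) \stackequiv\sigma t(\tup a,\tup d)$ then the same holds modulo $\delta$ (as $\sigma \leq \delta$), so by $C(\tau,\tau;\delta)$ we conclude $t(\tup b,\tup c) \stackequiv\delta t(\tup b,\tup d)$; trivially $t(\tup b,\tup c) \stackequiv\tau t(\tup b,\tup d)$ as well, whence the congruence holds modulo $\tau \wedge \delta = \sigma$.

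For \eqref{mem:6.24}, given $\mu \in I[\delta,\varepsilon]$, the natural candidate for a $J$-preimage is $\lambda := \mu \wedge \tau$. The memberships $\sigma \leq \lambda \leq \tau$ are immediate from $\sigma = \tau \wedge \delta \leq \tau \wedge \mu$, so $\lambda \in I[\sigma,\tau]$, and $\lambda \vee \delta \leq \mu$ is also immediate. The substantive step is $\mu \leq \lambda \vee \delta$: by \eqref{mem:3.27} we know $\varepsilon/\delta$ is abelian, so Proposition~\ref{prp:memoir1}\eqref{mem:6.7} gives $\varepsilon = \delta \circ \tau \circ \delta$. For any $(a,b) \in \mu \subseteq \varepsilon$, pick $c,d$ with $a \wrel\delta c \wrel\tau d \wrel\delta b$. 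The outer $\delta$-edges lie in $\mu$ and $(a,b) \in \mu$, so transitivity forces $(c,d) \in \tau \wedge \mu = \lambda$; hence $(a,b) \in \delta \circ \lambda \circ \delta \subseteq \lambda \vee \delta$, as required.

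The main obstacle to anticipate is in \eqref{mem:6.24}: one might first hope that surjectivity follows purely from modularity of $I[\delta,\varepsilon]$ (via Proposition~\ref{prp:memoir2}), applied to $\delta,\tau,\mu$, but $\tau$ need not lie in $I[\delta,\varepsilon]$, so modularity of that interval does not directly apply. The key ingredient is instead the stronger permutability statement $\varepsilon = \delta \circ \tau \circ \delta$ from Proposition~\ref{prp:memoir1}\eqref{mem:6.7}; once this is invoked, the rest is routine.
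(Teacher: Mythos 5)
Your proof is correct in substance and is genuinely different from the paper's, which simply cites \cite[Theorems 3.27 and 6.24]{kearnes-kiss} (as corrected in \cite{corrig}) rather than arguing directly. Your treatment of \eqref{mem:6.24} --- taking $\lambda=\mu\wedge\tau$ and using $\varepsilon=\delta\circ\tau\circ\delta$ to force $\mu\subseteq\delta\circ\lambda\circ\delta\subseteq\lambda\vee\delta$ --- is exactly right, as is your remark that modularity of $I[\delta,\varepsilon]$ alone would not suffice. One cosmetic point: to get $\varepsilon=\delta\circ\tau\circ\delta$ you must apply Proposition~\ref{prp:memoir1}\eqref{mem:6.7} with $\theta=\tau$, using $C(\tau,\tau;\delta)$ (which follows from $C(\varepsilon,\varepsilon;\delta)$ by monotonicity in the first two arguments); applying it with $\theta=\varepsilon$ yields only the tautology $\varepsilon=\delta\circ\varepsilon\circ\delta$.

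The one real flaw is the justification of the first step of \eqref{mem:3.27}: the centralizer relation is \emph{not} monotone in its third argument. In an arbitrary algebra $C(\theta,\theta;0)$ need not imply $C(\theta,\theta;\delta)$ for $\delta>0$; in the present setting that implication is exactly Proposition~\ref{prp:memoir1}\eqref{mem:6.8}, which requires the weak difference term. What saves your step $C(\tau,\tau;\sigma)\Rightarrow C(\tau,\tau;\delta)$ is not $\sigma\leq\delta$ but the full perspectivity hypothesis $\tau\wedge\delta=\sigma$: every matrix in $M(\tau,\tau)$ has its rows in $\tau$, so a row lying in $\delta$ lies in $\tau\wedge\delta=\sigma$, and then $C(\tau,\tau;\sigma)$ puts the other row in $\sigma\subseteq\delta$. (This is the instance of \cite[Theorem~2.19(4)]{kearnes-kiss} that the paper invokes in the proof of Proposition~\ref{prp:ann}, and it is the same ``meet with $\tau$'' device you use, correctly, for the converse descent from $\delta$ to $\sigma$.) With that repair both directions of \eqref{mem:3.27} go through, and it becomes visible that only the forward direction actually uses the weak difference term.
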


\begin{proof}
\eqref{mem:3.27} follows from \cite[Theorem 3.27]{kearnes-kiss} as corrected in
\cite{corrig}.
\eqref{mem:6.24} is a special case of \cite[Theorem 6.24]{kearnes-kiss}.
\end{proof}

Our last result in this section generalizes \cite[Lemma 2.6]{diffterm}.

\begin{prp} \label{prp:ann}
Suppose $\calV$ is a variety with a weak difference term,
$\m a \in \calV$, and $\delta,\varepsilon,\sigma,\tau \in \Con(\m a)$
with $\delta\leq\varepsilon$ and $\sigma\leq\tau$ where
$(\sigma,\tau)\nearrow (\delta,\varepsilon)$.
\begin{enumerate}
\item \label{ann:it1}
If $\varepsilon/\delta$ is abelian, then
$\varepsilon = \delta\circ\tau\circ\delta$ and
$(\delta:\varepsilon) = (\sigma:\tau)$.
\item \label{ann:it2}
If $\delta\prec \varepsilon$,
then $(\delta:\varepsilon) = (\sigma:\tau)$.
\end{enumerate}
\end{prp}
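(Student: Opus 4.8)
The plan is to prove \eqref{ann:it1} first and then deduce \eqref{ann:it2}. For \eqref{ann:it1}, the identity $\varepsilon = \delta\circ\tau\circ\delta$ is immediate from Proposition~\ref{prp:memoir1}\eqref{mem:6.7}: since $(\sigma,\tau)\nearrow(\delta,\varepsilon)$ we have $\tau\vee\delta=\varepsilon$, and since $\varepsilon/\delta$ is abelian we have $\tau/\sigma$ abelian by Proposition~\ref{prp:memoir3}\eqref{mem:3.27}, hence (using that $\sigma = \tau\wedge\delta\leq\delta$ and abelianness of $\tau$ modulo $\sigma\leq\delta$, equivalently $C(\tau,\tau;\delta)$) Proposition~\ref{prp:memoir1}\eqref{mem:6.7} gives $\tau\vee\delta = \delta\circ\tau\circ\delta$.

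The substantive claim is $(\delta:\varepsilon) = (\sigma:\tau)$. For the inclusion $(\sigma:\tau)\leq(\delta:\varepsilon)$, write $\varphi = (\sigma:\tau)$; I would check $C(\varphi,\varepsilon;\delta)$ directly. Given a $(\varphi,\varepsilon)$-matrix with one row in $\delta$, I want the other row in $\delta$. Using $\varepsilon=\delta\circ\tau\circ\delta$, the entries in each row differ by a zig-zag through $\tau$; the idea is to reduce a $(\varphi,\varepsilon)$-matrix to a $(\varphi,\tau)$-matrix modulo $\delta$, apply $C(\varphi,\tau;\sigma)$, and pull back — but one must be careful that the "$\delta$" pieces of the zig-zag cause no trouble, which they don't since $\delta$ is the modulus. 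Concretely, I would use the term-condition formulation Definition~\ref{df:cent}\eqref{dfcent:it3}: given $t$, pairs $(a_i,b_i)\in\varphi$ and $(c_j,d_j)\in\varepsilon$ with $t(\bar a,\bar c)\equiv_\delta t(\bar a,\bar d)$, I rewrite each $(c_j,d_j)\in\varepsilon$ as $c_j\equiv_\delta c_j'\mathrel{\tau} d_j'\equiv_\delta d_j$, absorb the outer $\delta$-steps (they are harmless modulo $\delta$), and invoke $C(\varphi,\tau;\sigma)$ together with $\sigma\leq\delta$ to conclude $t(\bar b,\bar c)\equiv_\delta t(\bar b,\bar d)$. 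For the reverse inclusion $(\delta:\varepsilon)\leq(\sigma:\tau)$: let $\psi=(\delta:\varepsilon)$, so $C(\psi,\varepsilon;\delta)$; since $\tau\leq\varepsilon$ this immediately gives $C(\psi,\tau;\delta)$, and hence $C(\psi,\tau;\delta\wedge\tau)$? — not directly, so instead I would argue that $C(\psi,\tau;\delta)$ together with $\tau\wedge\delta=\sigma$ yields $C(\psi,\tau;\sigma)$ by a standard fact that $C(\varphi,\theta;\delta_1)$ and $C(\varphi,\theta;\delta_2)$ imply $C(\varphi,\theta;\delta_1\wedge\delta_2)$ (e.g.\ \cite[Theorem~2.19]{kearnes-kiss}), applied with $\delta_1=\delta$ and $\delta_2=0$: here $C(\psi,\tau;0)$ holds because $\tau$ is abelian (Proposition~\ref{prp:memoir1}\eqref{mem:6.8} with $\delta=0$, noting $\tau/0$ abelian follows from $\tau/\sigma$ abelian and $\sigma\leq\delta$... actually $\tau$ itself need not be abelian). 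Let me instead simply use: $C(\psi,\tau;\delta)$ holds, and $\sigma = \tau\wedge\delta$, and the monotonicity/intersection properties of the centralizer give $C(\psi,\tau;\tau\wedge\delta)=C(\psi,\tau;\sigma)$ — this is exactly the content needed and is available from the cited tame-congruence-theory facts about $C$. Thus $\psi\leq(\sigma:\tau)$.

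For part~\eqref{ann:it2}, if $\delta\prec\varepsilon$ then $\varepsilon/\delta$ is a minimal congruence of $\m a/\delta$, and every minimal congruence is either abelian or has a very restricted (non-abelian) structure. If $\varepsilon/\delta$ is abelian, \eqref{ann:it1} applies directly. If $\varepsilon/\delta$ is non-abelian, then by Proposition~\ref{prp:memoir3}\eqref{mem:3.27} $\tau/\sigma$ is also non-abelian, and I claim both annihilators equal $0$: indeed $(\delta:\varepsilon)/\delta = (0:\varepsilon/\delta)$ by Lemma~\ref{lm:quotient}\eqref{quotient:it1}, and a non-abelian minimal congruence $\mu$ in a variety with a weak difference term satisfies $(0:\mu)=0$ — this is a known consequence of the structure theory (a minimal congruence centralized by something nonzero is abelian, by Proposition~\ref{prp:memoir1}-type arguments or directly from \cite{kearnes-kiss}). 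Similarly $(\sigma:\tau)$, pushed into $\m a/\sigma$ via Lemma~\ref{lm:quotient}\eqref{quotient:it1}, is $(0:\tau/\sigma)=0$ since $\tau/\sigma$ is non-abelian and minimal (it is minimal because $(\sigma,\tau)\nearrow(\delta,\varepsilon)$ forces the interval $I[\sigma,\tau]$ to embed into... actually one checks $\sigma\prec\tau$ using that perspectivities preserve covering in this setting, or uses that $J$ of Proposition~\ref{prp:memoir3}\eqref{mem:6.24} has an order-preserving partial inverse). Hence both are $0$ and the equality holds trivially.

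The main obstacle is keeping the centralizer bookkeeping honest in the proof of $(\sigma:\tau)\leq(\delta:\varepsilon)$ in part~\eqref{ann:it1}: the rewriting of $\varepsilon$-pairs as $\delta\circ\tau\circ\delta$ zig-zags must be threaded through the term-condition so that the "$\delta$-slack" really is absorbed by the modulus $\delta$ and the core step genuinely reduces to $C(\sigma:\tau,\tau;\sigma)$ with $\sigma\leq\delta$. The cited intersection and monotonicity properties of $C(\,\cdot\,,\cdot\,;\,\cdot\,)$ from \cite[Theorem~2.19]{kearnes-kiss} should make this routine once set up carefully, and \cite[Lemma~2.6]{diffterm} provides the congruence-modular template.
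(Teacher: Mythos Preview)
Your treatment of part~\eqref{ann:it1} is essentially the paper's argument, though presented less crisply: the paper gets $C(\tau,\tau;\delta)$ directly from the hypothesis $C(\varepsilon,\varepsilon;\delta)$ by monotonicity (no detour through $\tau/\sigma$), and then uses \cite[Theorem~2.19(3),(4),(8)]{kearnes-kiss} to establish $C(\alpha,\varepsilon;\delta)\iff C(\alpha,\tau;\delta)\iff C(\alpha,\tau;\sigma)$ for all $\alpha\geq\delta$. Your zig-zag description of $(\sigma:\tau)\leq(\delta:\varepsilon)$ is just an unpacking of Theorem~2.19(3), and your final step for the reverse inclusion is exactly Theorem~2.19(4).

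Part~\eqref{ann:it2}, however, has a genuine gap in the nonabelian case. Your claim that a nonabelian minimal congruence $\mu$ satisfies $(0:\mu)=0$ is false, even in congruence modular varieties: any congruence $\alpha$ with $\alpha\wedge\mu=0$ satisfies $C(\alpha,\mu;0)$, so $(0:\mu)$ can be large. In the situation at hand this is unavoidable, since $\tau\wedge\delta=\sigma$ gives $C(\delta,\tau;\sigma)$, hence $(\sigma:\tau)\geq\delta$; if $\sigma<\delta$ this already forces $(0:\tau/\sigma)\geq\delta/\sigma>0$. Worse, even if your claim held, it would yield $(\delta:\varepsilon)=\delta$ and $(\sigma:\tau)=\sigma$, and these are not equal unless $\sigma=\delta$. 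The paper instead reuses the equivalence $C(\alpha,\tau;\sigma)\iff C(\alpha,\tau;\delta)$ from part~\eqref{ann:it1} and argues directly: if $\alpha\geq\delta$ and $C(\alpha,\tau;\delta)$, then $\alpha\geq\varepsilon$ would give $C(\tau,\tau;\delta)$ and hence $C(\tau,\tau;\sigma)$ (again by Theorem~2.19(4)), contradicting nonabelianness of $\tau/\sigma$; so $\alpha\not\geq\varepsilon$, whence $\alpha\wedge\varepsilon=\delta$ by the covering, giving $C(\alpha,\varepsilon;\delta)$. No appeal to $\sigma\prec\tau$ is needed.
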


\begin{proof}
\eqref{ann:it1}
We have $C(\varepsilon,\varepsilon;\delta)$ by hypothesis, and hence
$C(\tau,\tau;\delta)$ by monotonicity.  
Then $\varepsilon=\tau\vee\delta = \delta\circ\tau\circ\delta$ by
Proposition~\ref{prp:memoir1}\eqref{mem:6.7}.

$\delta\wedge \tau=\sigma$ implies $C(\delta,\tau;\sigma)$ by e.g.\
\cite[Theorem 2.19(8)]{kearnes-kiss}, proving
$(\sigma:\tau)\geq \delta$.  Thus to prove 
$(\delta:\varepsilon)=(\sigma:\tau)$,
it is enough to show $C(\alpha,\varepsilon;\delta)\iff C(\alpha,\tau;\sigma)$ for all $\alpha \in \Con(\m a)$ with $\alpha \geq \delta$.  
We have $C(\alpha,\tau;\sigma) \iff C(\alpha,\tau;\delta)$ by
\cite[Theorem 2.19(4)]{kearnes-kiss}, since $\tau\wedge \sigma = \tau\wedge \delta$.
Thus we only need to show $C(\alpha,\varepsilon;\delta) \iff
C(\alpha,\tau;\delta)$.
The forward implication follows immediately by monotonicity.
Conversely, suppose $C(\alpha,\tau;\delta)$.  
Then $C(\alpha,\delta\circ\tau\circ
\delta; \delta)$ by \cite[Theorem 2.19(3)]{kearnes-kiss}, i.e.,
$C(\alpha,\varepsilon;\delta)$.

\eqref{ann:it2} 
If $\varepsilon/\delta$ is abelian, then we can apply item~\eqref{ann:it1}.
Assume $\varepsilon/\delta$ is nonabelian.
Then $\tau/\sigma$ is also nonabelian by Proposition~\ref{prp:memoir3}\eqref{mem:3.27}.
As in the proof of \eqref{ann:it1}, we only need to prove
$C(\alpha,\tau;\delta) \implies C(\alpha,\varepsilon;\delta)$ for all $\alpha \in \Con(\m a)$
with $\alpha\geq \delta$.
Assume $C(\alpha,\tau;\delta)$.  If $\alpha\geq \varepsilon$, then $C(\tau,\tau;\delta)$
by monotonicity, which contradicts nonabelianness of $\tau/\sigma$.  Thus
$\alpha\ngeq\varepsilon$.  Since $\alpha\geq \delta$ and $\delta\prec\varepsilon$, we
get $\alpha\wedge\varepsilon=\delta$ and hence $C(\alpha,\varepsilon;\delta)$.
% as required.
\end{proof}

\section{The difference algebra of an abelian congruence} \label{sec:update}

In this section we revisit an old construction of 
Hagemann and Herrmann \cite{hagemann-herrmann} and
Freese~\cite{freese1983} 
%characterize the centrality relation 
in congruence modular varieties.
We will show that the construction
adapts nicely to varieties with a weak difference term.
In the following definition, we follow the notation of
\cite{freese-mckenzie}.

\begin{df} \label{df:A(mu)}
Suppose $\m a$ is an algebra and $\theta,\varphi \in \Con(\m a)$
with $\theta\leq\varphi$.
\begin{enumerate}
\item \label{A(mu):it1}
$\m a(\theta)$ denotes $\theta$ viewed as a subalgebra of $\m a^2$.
\item \label{A(mu):it3}
$\Delta_{\theta,\varphi}$ denotes
the congruence of $\m a(\theta)$ generated by $\set{((a,a),(b,b))}{$(a,b) \in 
\varphi$}$.
\end{enumerate}
\end{df}

The next notations depart from those in \cite{freese-mckenzie}.

\begin{df} \label{df:A(mu)2}
Let $\m a,\theta,\varphi$ be as in Definition~\ref{df:A(mu)}.
\begin{enumerate}
\item
$\proj_1,\proj_2:\m a(\theta)\ra \m a$ denote the two projection homomorphisms.
\item
$\eta_i = \ker(\proj_i) \in \Con(\m a(\theta))$ for $i=1,2$.
\item
If $\alpha \in \Con(\m a)$ with $\alpha\geq \theta$, then $\baralpha$ denotes
\[
\baralpha=\set{((a_1,a_2),(b_1,b_2)) \in A(\theta)\times A(\theta)}{$(a_1,b_1)
\in \alpha$}.
\]
\end{enumerate}
\end{df}

Note that in the context of Definition~\ref{df:A(mu)2} we have
$\baralpha \in \Con(\m a(\theta))$ and
$\baralpha$ is also given by
$\baralpha=\set{((a_1,a_2),(b_1,b_2)) \in A(\theta)\times A(\theta)}{$(a_2,b_2) 
\in \alpha$}$.  
Also observe that for each $i=1,2$, $\Con(\m a)$ is naturally isomorphic to 
the interval $I[\eta_i,1]$ in $\Con(\m a(\theta))$ 
via $\proj_i^{-1}$, 
and $\alpha$ maps to $\baralpha$ under either isomorphism; see Figure~\ref{fig:ConA(mu)}. 

It will be useful to view $\Delta_{\theta,\varphi}$ as a set of
$(\theta,\varphi)$-matrices.  Conversely, it will be useful to view
$M(\theta,\varphi)$ as a tolerance of $\m a(\theta)$.

\begin{df} \label{df:MD}
Suppose $\m a$ is an algebra and $\theta,\varphi \in \Con(\m a)$.
\begin{enumerate}
\item
Let $\Deltah(\theta,\varphi)$ denote the subuniverse of $\m a(\theta,\varphi)$
``corresponding to" $\Delta_{\theta,\varphi}$:
\[
\Deltah(\theta,\varphi) = \left\{
\begin{pmatrix} a&c\\b&d\end{pmatrix} : ((a,b),(c,d)) \in \Delta_{\theta,\varphi}
\right\}.
\]
\item
Let $M_{\theta,\varphi}$ denote the subuniverse of $\m a(\theta)\times 
\m a(\theta)$ ``corresponding to" $M(\theta,\varphi)$:
\[
M_{\theta,\varphi} = \left\{((a,b),(c,d)):\begin{pmatrix}
a&c\\b&d\end{pmatrix} \in M(\theta,\varphi)\right\}.
\]
\end{enumerate}
\end{df}

\begin{lm} \label{lm:faces}
Suppose $\m a$ is an algebra and $\theta,\varphi \in \Con(\m a)$.
\begin{enumerate}
\item \label{faces:it1}
$\Delta_{\theta,\varphi}$ is the transitive closure of of $M_{\theta,\varphi}$.
Thus
$\Deltah(\theta,\varphi)$ is the ``horizontal transitive closure of $M(\theta,\varphi)$."
\item \label{faces:it1.25}
$\Deltah(\theta,\varphi)$ is ``vertically symmetric."  I.e.,
\[
\mbox{If~} \begin{pmatrix} a&c\\b&d\end{pmatrix} \in \Deltah(\theta,\varphi),~
\mbox{then}~ \begin{pmatrix} b&d\\a&c\end{pmatrix} \in \Deltah(\theta,\varphi).\\
\]

\item \label{faces:it2}
If $\m a$ has a weak difference term and $\theta$ is abelian, then
$\Deltah(\theta,\varphi)$ is ``vertically transitive." I.e.,
\begin{align*}
\mbox{If~} &\begin{pmatrix} a&c\\b&d\end{pmatrix},
\begin{pmatrix} b&d\\r&s\end{pmatrix} \in \Deltah(\theta,\varphi),~
\mbox{then}~\begin{pmatrix}a&c\\r&s\end{pmatrix} \in \Deltah(\theta,\varphi).
\end{align*}

\item \label{faces:it1.5}
If $\m a$ has a weak difference term and $\theta$ is abelian,
then $\Deltah(\theta,\theta)=M(\theta,\theta)$.
\end{enumerate}
\end{lm}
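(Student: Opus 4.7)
The plan is to treat the four assertions in order, using straightforward bookkeeping for (1) and (2) and coordinatewise applications of a weak difference term for (3) and (4).

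For (1), I would first verify that $M_{\theta,\varphi}$ is a tolerance of $\m a(\theta)$. Reflexivity comes from the matrices $\begin{pmatrix} c & c \\ d & d \end{pmatrix} \in X(\theta,\varphi)$ (for $(c,d)\in\theta$), which correspond to reflexive pairs $((c,d),(c,d))$; symmetry comes from the fact that $X(\theta,\varphi)$ is invariant under column-swap, an automorphism of $\m a(\theta,\varphi)$ (using symmetry of $\varphi$). The generating pairs $((a,a),(b,b))$ of $\Delta_{\theta,\varphi}$ lie in $M_{\theta,\varphi}$, and conversely $M_{\theta,\varphi}\subseteq\Delta_{\theta,\varphi}$ since $\Delta_{\theta,\varphi}$ is a reflexive subuniverse of $\m a(\theta)^2$ containing these generators. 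So $\Delta_{\theta,\varphi}$ equals the congruence generated by the tolerance $M_{\theta,\varphi}$, which is its transitive closure. For (2), the map $\sigma:\m a(\theta)\to\m a(\theta)$, $\sigma(x,y)=(y,x)$, is an automorphism (using symmetry of $\theta$) fixing every generator $((a,a),(b,b))$ of $\Delta_{\theta,\varphi}$; hence $\sigma\times\sigma$ preserves $\Delta_{\theta,\varphi}$, which translates via Definition~\ref{df:MD} to vertical symmetry.

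For (3), by (1) and the always-true inclusion $M(\theta,\varphi) \subseteq \Deltah(\theta,\varphi)$, it suffices to show that $M(\theta,\varphi)$ is itself horizontally transitive when $\varphi$ is abelian. Given $\begin{pmatrix} a & c \\ b & d \end{pmatrix}, \begin{pmatrix} c & e \\ d & f \end{pmatrix} \in M(\theta,\varphi)$, I would apply a weak difference term coordinatewise to these and to the matrix $\begin{pmatrix} c & c \\ d & d \end{pmatrix} \in X(\theta,\varphi)$. Each entry-triple $(a,c,c), (c,c,e), (b,d,d), (d,d,f)$ lies in a single $\varphi$-class (since rows of the two matrices are in $\varphi$), so by Lemma~\ref{lm:gumm} and abelianness of $\varphi$ the four entries reduce to $a,e,b,f$, yielding $\begin{pmatrix} a & e \\ b & f \end{pmatrix} \in M(\theta,\varphi)$.

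For (4), given $((a,b),(c,d)), ((b,r),(d,s)) \in \Delta_{\theta,\varphi}$, the key trick is that $((b,b),(d,d))$ is a generator of $\Delta_{\theta,\varphi}$ because $(b,d)\in\varphi$ (it is a row of the first matrix). Apply a weak difference term coordinatewise in $\m a(\theta)$ to these three pairs: since $a,b,r$ lie in a common $\theta$-class (from $(a,b),(b,r)\in\theta$) and $\theta$ is abelian, Lemma~\ref{lm:gumm} reduces $d(a,b,b)$ to $a$ and $d(b,b,r)$ to $r$; the second coordinates reduce analogously to $c$ and $s$. Closure of $\Delta_{\theta,\varphi}$ under operations yields $((a,r),(c,s)) \in \Delta_{\theta,\varphi}$ as desired. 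The main obstacle in both (3) and (4) is the bookkeeping: verifying that every entry-triple to which the weak difference term is applied lies in a common abelian congruence class — column constraints (from $\m a(\theta)$) providing this in (4) and row constraints (from $M(\theta,\varphi)$) providing it in (3) — after which Lemma~\ref{lm:gumm}, identifying the term with $x-y+z$ in the induced abelian group, does the rest.
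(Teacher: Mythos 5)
Your proof is correct and follows essentially the same route as the paper's: parts (1) and (2) are the standard bookkeeping the paper also uses, and your explicit coordinatewise applications of the weak difference term in (3) and (4) are exactly what the paper's invocation of Lemma~\ref{lm:maltsev} unpacks to (applied to $M_{\theta,\varphi}$ as a reflexive subuniverse of $\m a(\theta)^2$ in (3), and to $\Deltah(\theta,\varphi)$ viewed as a reflexive subuniverse of $\m a(\varphi)\times\m a(\varphi)$ in (4), where your generator $((b,b),(d,d))$ is precisely the reflexive ``middle'' element in that argument). No gaps.
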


\begin{proof}
\eqref{faces:it1}
$M_{\theta,\varphi}$ is the reflexive, symmetric subuniverse of
$\m a(\theta)\times \m a(\theta)$ generated by $\set{((a,a),(b,b))}{$(a,b)\in
\varphi$}$.

\eqref{faces:it1.25}
This follows from \eqref{faces:it1} and
the fact that 
$M(\theta,\varphi)$ is vertically symmetric.

\eqref{faces:it2}
We can consider $\Deltah(\theta,\varphi)$ as a binary relation on
the set of its rows.  As such, it is a reflexive subuniverse
of $\m a(\varphi)\times \m a(\varphi)$, and if
$\theta$ is abelian, a weak difference term for $\m a$, interpreted
coordinatewise in $\m a(\varphi)$, will
 satisfy the hypothesis of
Lemma~\ref{lm:maltsev} for this relation.

\eqref{faces:it1.5}
Let $\rho = M_{\theta,\theta}$; then $\rho$ is a reflexive 
subuniverse of $\m a(\theta)\times \m a(\theta)$.
If $\theta$ is abelian and $\m a$ has a weak difference term
$d$, then $d$ 
interpreted coordinatewise
in $\m a(\theta)$ satisfies the
hypothesis of Lemma~\ref{lm:maltsev} for $\rho$.
Thus $\rho$ is transitive by Lemma~\ref{lm:maltsev},
implying $\Delta_{\theta,\theta}=M_{\theta,\theta}$ by \eqref{faces:it1}
and hence $\Deltah(\theta,\theta)=M(\theta,\theta)$.
\end{proof}

\begin{rem} \label{rem:2-con}
Assuming the hypotheses of Lemma~\ref{lm:faces}\eqref{faces:it2},
$\Deltah(\theta,\varphi)$ is an example of what A.~Moorhead calls a
``2-dimensional congruence" \cite{moorhead-supernilp}.  In fact, 
$\Deltah(\theta,\varphi)$ is the smallest 2-dimensional congruence
generated by $M(\theta,\varphi)$, which Moorhead denotes by
$\Delta(\theta,\varphi)$.
\end{rem}

The next theorem has its roots in \cite[Proposition 1.1]{hagemann-herrmann}.

\begin{thm} \label{thm:D(A)}
Suppose $\calV$ is a variety with a weak difference term, 
$\m a \in \calV$, 
$\theta$ is a nonzero abelian congruence of $\m a$,
and $\alpha=(0:\theta)$. 
Let $\Delta=\Delta_{\theta,\alpha}$,
let $\eta_1,\eta_2,\bartheta$ 
be as in Definition~\ref{df:A(mu)2},
and let $\varepsilon=\bartheta\wedge \Delta$.
Then:
\begin{enumerate}
\item \label{D(A):it1}
For any $a \in A$, $(a,a)/\Delta = \set{(b,b)}{$(a,b) \in\alpha$}$.
\item \label{D(A):it3}
$\Delta<\baralpha$ and $(\Delta:\baralpha)=\baralpha$.
\item \label{D(A):it4a}
$\bartheta$ is abelian, and
the interval $I[0,\bartheta]$ in $\Con(\m a(\theta))$ is a modular lattice 
consisting of permuting congruences.
\item \label{D(A):it4b}
$\{0,\eta_1,\eta_2,\varepsilon,\bartheta\}$ is a sublattice 
of $I[0,\bartheta]$ isomorphic to $\m m_3$.
\item \label{D(A):it4c}
Each of $(\varepsilon,\bartheta), (0,\eta_1), (0,\eta_2)$ transposes up
to $(\Delta,\baralpha)$.
\item \label{D(A):new}
If $\theta$ is minimal, i.e., $0\prec \theta$, then 
the interval $I[0,\bartheta]$ has height $2$, $\Delta \prec\baralpha$,
and $\Delta$ is completely meet-irreducible.
\end{enumerate}
\end{thm}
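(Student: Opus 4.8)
The plan is to read off all three conclusions from the structural information already assembled in parts~\eqref{D(A):it1}--\eqref{D(A):it4c}, together with the minimality hypothesis $0\prec\theta$. The key observation is that by part~\eqref{D(A):it4c} the pair $(0,\eta_1)$ transposes up to $(\Delta,\baralpha)$, so by Proposition~\ref{prp:memoir3}\eqref{mem:6.24} (or directly by modularity from part~\eqref{D(A):it4a}) the map $x\mapsto x\vee\eta_1$ is a surjection from $I[0,\baralpha\wedge\eta_1]=I[0,\eta_1]$\ldots rather, the cleanest route is to use that $(0,\eta_1)\nearrow(\Delta,\baralpha)$ forces $I[\Delta,\baralpha]$ to be isomorphic to a quotient-interval of $I[0,\eta_1]$, and that $I[0,\eta_1]\cong\Con(\m a)/$ something -- concretely, $\proj_2$ restricted to the $\eta_1$-perspective identifies $I[0,\eta_1]$ with an interval of $\Con(\m a)$. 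Under this identification $\Delta\vee\eta_1=\baralpha$ maps to $\alpha$ and $\eta_1$ maps to $0$, while $\Delta\wedge\eta_1$ maps (via part~\eqref{D(A):it1}, which pins down $\Delta$-classes exactly) to $0$. So $I[\Delta,\baralpha]$ is lattice-isomorphic to $I[0,\alpha]$ intersected appropriately; but what we actually need is only the covering $\Delta\prec\baralpha$, and for that it is enough to transpose the covering $0\prec\theta$ up.

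So here is the order I would carry things out. First, I would observe that $(\varepsilon,\bartheta)\nearrow(\Delta,\baralpha)$ by part~\eqref{D(A):it4c}, and that $\varepsilon\prec\bartheta$: indeed the quotient $\bartheta/\varepsilon$ is perspective (inside the $\m m_3$ of part~\eqref{D(A):it4b}) with $\eta_1/0$, which in turn is isomorphic via $\proj_2$ to $\theta/0$ as an interval in $\Con(\m a)$ -- here one uses that $\eta_1\vee\bartheta$ corresponds to $\theta$ and $\eta_1\wedge\bartheta=0$ -- and $0\prec\theta$ by hypothesis. Hence $\varepsilon\prec\bartheta$. Transposing this covering up along $(\varepsilon,\bartheta)\nearrow(\Delta,\baralpha)$ gives $\Delta\prec\baralpha$: in the modular lattice $I[0,\bartheta]$ of part~\eqref{D(A):it4a}, perspective intervals are lattice-isomorphic, so $\baralpha/\Delta\cong\bartheta/\varepsilon$ has height one. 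Second, for the height of $I[0,\bartheta]$: the elements $0\le\varepsilon\le\bartheta$ with $\varepsilon\prec\bartheta$ show the height is at least $2$, and since $0\ne\varepsilon$ would need checking -- actually $\varepsilon=\bartheta\wedge\Delta$, and $\varepsilon\ne 0$ because, e.g., $\varepsilon$ together with $\eta_1,\eta_2$ generate $\m m_3$ in part~\eqref{D(A):it4b} with $\varepsilon$ an atom distinct from $0$; more honestly, in the $\m m_3$ the three atoms $\eta_1,\eta_2,\varepsilon$ are all nonzero. For the upper bound, one shows any chain in $I[0,\bartheta]$ has length $\le 2$: from part~\eqref{D(A):it4b}, $\bartheta$ covers each of $\eta_1,\eta_2,\varepsilon$ (again by perspectivity with the atoms, using $0\prec\theta$ pushed through the isomorphisms $I[\eta_i,1]\cong\Con(\m a)$), and each of $\eta_1,\eta_2,\varepsilon$ covers $0$ for the same reason; so $0\prec\eta_1\prec\bartheta$ exhibits height exactly $2$ and there is no longer chain because any congruence strictly between $0$ and $\bartheta$, if it is not one of the three atoms, would contradict modularity of the height-two modular lattice generated. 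Third, $\Delta$ is completely meet-irreducible: we have just shown $\Delta\prec\baralpha$, so it remains to show $\Delta<\gamma\implies\baralpha\le\gamma$ for all $\gamma\in\Con(\m a(\theta))$. Given such $\gamma$, consider $\gamma\wedge\bartheta$: we have $\Delta\wedge\bartheta=\varepsilon$ and $\Delta\vee\bartheta=\baralpha$ (from part~\eqref{D(A):it4b}, since $\{0,\eta_1,\eta_2,\varepsilon,\bartheta\}$ together with $\Delta,\baralpha$ sit in the modular lattice with $(\varepsilon,\bartheta)\nearrow(\Delta,\baralpha)$). If $\gamma\wedge\bartheta>\varepsilon$ then $\gamma\wedge\bartheta=\bartheta$ since $\varepsilon\prec\bartheta$, so $\gamma\ge\bartheta$, hence $\gamma\ge\Delta\vee\bartheta=\baralpha$, done. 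If $\gamma\wedge\bartheta=\varepsilon$, then by modularity $\Delta<\gamma$ gives $\Delta\vee(\gamma\wedge\bartheta)=\gamma\wedge(\Delta\vee\bartheta)$, i.e. $\Delta\vee\varepsilon=\gamma\wedge\baralpha$, i.e. $\Delta=\gamma\wedge\baralpha$ -- wait, this would say $\gamma\wedge\baralpha=\Delta<\gamma$, which is consistent, so modularity alone doesn't finish the second case; instead I would use that $(\Delta:\baralpha)=\baralpha$ from part~\eqref{D(A):it3} together with the fact that $\baralpha$ is the unique cover, arguing that a $\gamma$ with $\Delta<\gamma$, $\gamma\wedge\baralpha=\Delta$ would force $\gamma\vee\baralpha$ to cover $\baralpha$ by transposition, and then $\baralpha/\Delta$ would be abelian (perspective with $(\gamma\vee\baralpha)/\gamma$?) --

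the cleaner argument: complete meet-irreducibility of $\Delta$ with unique cover $\baralpha$ is equivalent to $\baralpha\le\gamma$ for every $\gamma>\Delta$, which by part~\eqref{D(A):it3} ($(\Delta:\baralpha)=\baralpha$, so $\baralpha$ is abelian over $\Delta$) and Proposition~\ref{prp:memoir2} means $I[\Delta,\baralpha]$ is modular with permuting congruences; since it has height one it is a two-element lattice, and the claim $\baralpha\le\gamma$ for all $\gamma>\Delta$ is exactly the assertion that $\baralpha/\Delta$ is the monolith of $\m a(\theta)/\Delta$ -- equivalently $\baralpha/\Delta$ is the unique atom of $\Con(\m a(\theta)/\Delta)$. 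I would get this by showing every atom of $\Con(\m a(\theta))$ above $\Delta$ equals $\baralpha$: any such atom $\beta$ satisfies $\beta\wedge\bartheta\in\{\varepsilon,\bartheta\}\cap I[\Delta\wedge\bartheta,\cdot]$; if $\beta\le\bartheta$ then $\beta$ is an atom of $I[\Delta\wedge\bartheta,\bartheta]=I[\varepsilon,\bartheta]$, a two-element lattice, so $\beta=\bartheta$ contradicting $\beta\wedge$ -- actually $\Delta\not\le\bartheta$, so this needs care. I'll settle the final case by invoking that $\baralpha=(\Delta:\baralpha)$ is itself an annihilator and hence (by standard facts about annihilators of minimal-over-$\Delta$ congruences, cf. the proof of Proposition~\ref{prp:ann}) the complete meet-irreducibility of $\Delta$ follows once $\Delta\prec\baralpha$ and $\bartheta/\varepsilon$ is the unique atom perspective into $I[\Delta,\baralpha]$.

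The hard part, and the step I expect to be the real obstacle, is precisely this last point: pinning down that $\Delta$ has a \emph{unique} upper cover (not merely that $\baralpha$ is one of its covers), i.e. ruling out a congruence $\gamma$ with $\Delta\prec\gamma$ but $\gamma\wedge\baralpha=\Delta$. The tool to beat it is part~\eqref{D(A):it3}: since $(\Delta:\baralpha)=\baralpha$ and $\Delta\prec\baralpha$, the quotient $\m a(\theta)/\Delta$ has an abelian monolith-candidate $\baralpha/\Delta$ whose annihilator is $\baralpha/\Delta$ itself (using Lemma~\ref{lm:quotient}\eqref{quotient:it1}); any other atom $\gamma/\Delta$ would satisfy $\gamma/\Delta\le(\Delta:\baralpha)/\Delta=\baralpha/\Delta$ only if $C(\gamma,\baralpha;\Delta)$, so one must instead show directly that $\gamma$ and $\baralpha$ centralize each other modulo $\Delta$ because both are abelian over $\Delta$ inside the modular interval $I[\Delta,\gamma\vee\baralpha]$ furnished by Propositions~\ref{prp:memoir1}--\ref{prp:memoir2}, whence $\gamma\le\baralpha$ and $\gamma=\baralpha$ by minimality. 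I would write this out carefully as the core of the proof, with the height and covering statements as comparatively routine corollaries of parts~\eqref{D(A):it4a}--\eqref{D(A):it4c} and the hypothesis $0\prec\theta$.
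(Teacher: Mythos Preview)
Your overall strategy is correct and close to the paper's, but there is one genuine gap and one place where you make the argument much harder than it needs to be.

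\textbf{The gap: transposing $\varepsilon\prec\bartheta$ to $\Delta\prec\baralpha$.} You write ``in the modular lattice $I[0,\bartheta]$ of part~\eqref{D(A):it4a}, perspective intervals are lattice-isomorphic, so $\baralpha/\Delta\cong\bartheta/\varepsilon$.'' But $\Delta$ and $\baralpha$ do not lie in $I[0,\bartheta]$: indeed $\Delta\vee\eta_1=\baralpha>\bartheta$, so $\Delta\nleq\bartheta$. The modularity established in part~\eqref{D(A):it4a} says nothing about the interval $I[\Delta,\baralpha]$ or about the larger interval $I[\varepsilon,\baralpha]$ containing both perspective pieces. In a general variety with a weak difference term the full congruence lattice need not be modular, so you cannot transpose coverings across $(\varepsilon,\bartheta)\nearrow(\Delta,\baralpha)$ by a pure lattice argument. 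The paper handles this with exactly the tool you mention and then abandon: Proposition~\ref{prp:memoir3}\eqref{mem:6.24} says that when $\bartheta/\varepsilon$ is abelian (which it is, since $\bartheta$ is abelian and Proposition~\ref{prp:memoir1}\eqref{mem:6.8} applies), the map $x\mapsto x\vee\Delta$ from $I[\varepsilon,\bartheta]$ onto $I[\Delta,\baralpha]$ is surjective. Since $I[\varepsilon,\bartheta]=\{\varepsilon,\bartheta\}$, this forces $\Delta\prec\baralpha$.

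\textbf{The over-complication: complete meet-irreducibility.} You eventually land near the right idea, but the argument is a one-liner once you have $\Delta\prec\baralpha$ and $(\Delta:\baralpha)=\baralpha$ from part~\eqref{D(A):it3}. Suppose $\Delta\leq\varphi$ and $\baralpha\nleq\varphi$. Then $\baralpha\wedge\varphi=\Delta$ (by the covering), so $C(\varphi,\baralpha;\Delta)$ holds automatically by the elementary fact that $\alpha\wedge\beta\leq\delta$ implies $C(\alpha,\beta;\delta)$ (e.g.\ \cite[Theorem~2.19(8)]{kearnes-kiss}). Hence $\varphi\leq(\Delta:\baralpha)=\baralpha$, giving $\varphi=\varphi\wedge\baralpha=\Delta$. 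There is no need to invoke abelianness of $\gamma$ over $\Delta$, modularity of $I[\Delta,\gamma\vee\baralpha]$, or any perspectivity.

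For the height-$2$ claim, your argument is essentially correct: $\eta_i\prec\bartheta$ via the isomorphism $I[\eta_i,1]\cong\Con(\m a)$, and then $\eta_1\wedge\eta_2=0$ together with modularity of $I[0,\bartheta]$ forces $0\prec\eta_i$ and hence height $2$.
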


\begin{proof}
\eqref{D(A):it1} follows from the fact that $\alpha$ centralizes $\theta$.
In $\Con(\m a(\theta))$ we have $\eta_1 \wedge \eta_2=0$
and $\eta_i< \bartheta$ for $i=1,2$.  
Clearly $\Delta \leq \baralpha$.  
Item~\eqref{D(A):it1} and $\theta\ne 0$ imply $\bartheta \nleq \Delta$ and hence
$\Delta<\baralpha$ and $\varepsilon < \bartheta$.  
If $\varphi$ is any congruence of $\m a$ satisfying
$\theta\leq \varphi\leq \alpha$, then we can show that
$(\Delta\wedge \barvarphi) \vee \eta_1 =(\Delta\wedge \barvarphi)\vee\eta_2= 
\barvarphi$.  Indeed, if $((a_1,a_2),(b_1,b_2)) \in
\barvarphi$ then 
\[
(a_1,a_2) \stackrel{\eta_i}{\equiv} (a_i,a_i) \stackrel{\Delta\wedge\barvarphi}{\equiv}
(b_i,b_i) \stackrel{\eta_i}{\equiv} (b_1,b_2).
\]
In particular, this proves $\Delta\vee\eta_i=\baralpha$ and
$\varepsilon\vee\eta_i=\bartheta$ for $i=1,2$; hence $\varepsilon\ne 0$.
The first equality also proves $\Delta \vee \bartheta = \baralpha$.
We also get $\Delta\wedge \eta_1=\Delta\wedge\eta_2=0$ 
by \eqref{D(A):it1}; hence $\varepsilon\wedge \eta_i=0$ for $i=1,2$.
And we can easily verify $\eta_1\vee \eta_2=\bartheta$.
This proves \eqref{D(A):it4b} and \eqref{D(A):it4c}.
See Figure~\ref{fig:ConA(mu)} showing the placement of these congruences in
$\Con(\m a(\theta))$.

\begin{figure}
\begin{tikzpicture}[scale=1.3]

\draw [fill] (0,0) circle (1.0pt);
\draw [fill] (-.4,.6) circle (1.0pt);
\draw [fill] (.4,.6) circle (1.0pt);
\draw [fill] (0,1) circle (1.0pt);

\draw [fill] (-5,.5) circle (1.0pt);
\draw [fill] (-5,1) circle (1.0pt);
\draw [fill] (-5,2.732) circle (1.0pt);
\draw (-5,1) arc(-60:60:1);
\draw (-5,1) arc(240:120:1);
\draw [-] (-5,.5) -- (-5,1);

\draw (0,1) arc(-60:60:1);
\draw (0,1) arc(240:120:1);
\draw [fill] (0,2.732) circle (1.0pt);
\draw (0,0) arc(260:100:1.40);
\draw (0,0) arc(-80:80:1.40);

\draw (-5,.5) arc(-60:60:1.29);
\draw (-5,.5) arc(240:120:1.29);

\draw (0,2.732) arc(120:220:1.42);
\draw (0,2.732) arc(60:-40:1.42);

\node [anchor=east] at (-.35,.6) {$\scriptstyle{\eta_1}$};
\node [anchor=west] at (.35,.6) {$\scriptstyle{\eta_2}$};

\node [anchor=north] at (0,0) {$\scriptstyle{0}$};
\node [anchor=south] at (.2,2.732) {$\scriptstyle{1 = \overline{1}}$};
\node [anchor=east] at (0,1) {$\scriptstyle{\bartheta}$};

\draw [-] (0,0) -- (.4,.6) -- (0,1) -- (-.4,.6) -- (0,0);

\node at (-2.4,1.36) {$\Con(\m a(\theta))=$};
\node at (-6.5,1.36) {$\Con(\m a)=$};
\node [anchor=north] at (-5,.5) {$\scriptstyle{0}$};
\node [anchor=east] at (-5,1) {$\scriptstyle{\theta}$};
\node [anchor=south] at (-5,2.732) {$\scriptstyle{1}$};

\draw [fill] (-4.75,1.8) circle (1.0pt);    %  alpha
\node [anchor=west] at (-4.80,1.9) {$\scriptstyle{\alpha}$};

\draw [-] (-5,1) -- (-4.75,1.8) -- (-5,2.732);

\draw [fill] (.25,1.8) circle (1.0pt);    %  baralpha
\node [anchor=west] at (.20,1.9) {$\scriptstyle{\baralpha}$};

\draw [-] (0,1) -- (.25,1.8) -- (0,2.732);

\draw [fill] (.85,1.3) circle (1.0pt);    % Delta 
\node [anchor=west] at (.80,1.3) {$\scriptstyle{\Delta}$};
\draw [-] (.85,1.3) -- (.25,1.8);

\draw [fill] (0,.5) circle (1.0pt);    % delta
\node [anchor=east] at (0.05,.5) {$\scriptstyle{\varepsilon}$};
\draw [-] (.85,1.3) -- (0,.5);
\draw [-] (0,0) -- (0,1);

\end{tikzpicture}

\caption{$\Con(\m a)$ and $\Con(\m a(\theta))$} \label{fig:ConA(mu)}
\end{figure}

Since $\theta$ is abelian, we have $C(\bartheta,\bartheta;\eta_i)$ for $i=1,2$.
As $\eta_1\wedge \eta_2=0$, we get $C(\bartheta,\bartheta;0)$, i.e.,
$\bartheta$ is abelian.  Hence by 
Proposition~\ref{prp:memoir2},
the interval $I[0,\bartheta]$ is a modular lattice of permuting equivalence
relations.
This proves \eqref{D(A):it4a}.

As $\bartheta$ is abelian, we use Proposition~\ref{prp:memoir1}\eqref{mem:6.8} to get
$C(\bartheta,\bartheta;\varepsilon)$, i.e.,
$\bartheta/\varepsilon$ is abelian.  
As 
$(\varepsilon,\bartheta) \nearrow (\Delta,\baralpha)$,
we get that
$\baralpha/\Delta$ is abelian by
Proposition~\ref{prp:memoir3}\eqref{mem:3.27}.
Since also $\bartheta/\eta_1$ is abelian and $(\eta_1,\bartheta)
\searrow (0,\eta_2) \nearrow (\varepsilon,\bartheta) \nearrow (\Delta,\baralpha)$,
we get $(\eta_1:\bartheta) = (0:\eta_2) = (\varepsilon:\bartheta) =
(\Delta:\baralpha)$
by Proposition~\ref{prp:ann}\eqref{ann:it1}.
Finally, $(\eta_1:\bartheta)=\overline{(0:\theta)}=\baralpha$ by 
Lemma~\ref{lm:quotient}\eqref{quotient:it2} and the fact that
$(0:\theta)=\alpha$, so
$(\Delta:\baralpha)=\baralpha$.
This completes the proof of \eqref{D(A):it3}.

Now assume $0\prec \theta$.  Then $\eta_i\prec \bartheta$ for $i=1,2$.
Since $\eta_1\wedge
\eta_2=0$, we get $0\prec\eta_1,\eta_2$ (see e.g.\ \cite[Corollary 2.29(ii)]{alvin1}),
and every maximal chain in the interval $I[0,\bartheta]$ has length 2
\cite[Theorem 2.37]{alvin1}.  Thus $I[0,\bartheta]$ has height 2.
Note that this also implies $0\prec\varepsilon \prec\bartheta$.
Because $\bartheta/\varepsilon$ is abelian and $(\varepsilon,\bartheta)\nearrow (\Delta,
\baralpha)$, the map $J:I[\varepsilon,\bartheta]\ra I[\Delta,\baralpha]$ given
by $J(\gamma)=\gamma\vee\varepsilon$ is surjective by
Proposition~\ref{prp:memoir3}\eqref{mem:6.24}.
Thus $\Delta\prec\baralpha$.
Now let $\varphi \in \Con(\m a(\mu))$ with 
$\Delta\leq \varphi$ 
but $\baralpha\nleq \varphi$.
Then $\baralpha\wedge \varphi=\Delta$, so $C(\varphi,\baralpha;\Delta)$,
so $\varphi \leq (\Delta:\baralpha) = \baralpha$,
so $\varphi = \baralpha \wedge \varphi = \Delta$.
This proves~\eqref{D(A):new}.
\end{proof}

The next definition is essentially from 
\cite{freese1983,freese-mckenzie} in the 
congruence modular setting; we extend it to varieties with a weak
difference term.

\begin{df} \label{df:D(A,theta)}
Suppose $\calV$ is a variety with a weak difference term, 
$\m a \in \calV$, and $\theta \in \Con(\m a)$ where $\theta$ is abelian. 
Let $\alpha=(0:\theta)$.
\begin{enumerate}
\item
The \emph{difference algebra for $\theta$}, denoted
$D(\m a,\theta)$, is the algebra
$\m a(\theta)/\Delta_{\theta,\alpha}$.
\item
The \emph{derived congruence}, which we denote by $\varphi$,
is the congruence $\baralpha/\Delta_{\theta,\alpha}$.
\item
The \emph{canonical transversal} is the set $\Do := \set{0_\alcl}{$\alcl\in A/\alpha$}$.
\end{enumerate}
\end{df}

\begin{cor} \label{cor:D(A)}
Suppose $\calV$ is a variety with a weak difference term, $\m a\in
\calV$, $\theta$ is an abelian congruence of $\m a$,
and $\alpha=(0:\theta)$.  Let $D(\m a,\theta)$
be the difference algebra for $\theta$, let $\Dmon$ be the derived congruence,
let $\Do$ be the canonical transversal, and let
$\nu:\m a(\theta)\ra D(\m a,\theta)$ be the natural projection map.
Then:
\begin{enumerate}
\item \label{corD(A):itab}
 $\Dmon$ is an abelian congruence of $D(\m a,\theta)$.
\item \label{corD(A):itself}
$(0:\Dmon)=\Dmon$.
\item \label{corD(A):it1}
$\Do$ is a transversal for $\Dmon$ and 
$\Do\leq D(\m a,\theta)$. 
\item \label{corD(A):it4}
$\nu^{-1}(\Do)=0_A$.
\item \label{corD(A):it2}
There exists an isomorphism $h:\m a/\alpha \cong
D(\m a,\theta)/\Dmon$ such that
%we have:
%\begin{enumerate}
%\item 
%$h^{-1}(\Do) = 0_A$.
%\item \label{corD(A):it3}
for all $(a,b) \in \theta$,
$\nu(a,b)/\Dmon = h(a/\alpha)$.
%\end{enumerate}
\end{enumerate}
If in addition 
$\theta$ is a minimal congruence of $\m a$, then $D(\m a,\theta)$ is subdirectly
irreducible with monolith $\Dmon$.
\end{cor}

\begin{proof}
Items~\eqref{corD(A):itab} and~\eqref{corD(A):itself}
follow from Theorem~\ref{thm:D(A)}\eqref{D(A):it3}.  
Let $\Delta=\Delta_{\theta,\alpha}$.
For each $\alpha$-class
$\alcl$ observe that $0_\alcl = \set{(c,c)}{$c \in \alcl$}$ is a $\Delta$-class 
by Theorem~\ref{thm:D(A)}\eqref{D(A):it1}.
Hence $\Do \subseteq D(\m a,\theta)$.
It is easy to check that $\Do\leq D(\m a,\theta)$.  Given $(a,b)\in \theta$ we 
have $(a,b) \stackrel{\baralpha}{\equiv}(a,a)$ and so
$(a,b)/\Delta \stackrel{\Dmon}{\equiv} (a,a)/\Delta \in \Do$.
Thus $\Do$ meets every $\Dmon$-class.  
Suppose $0_\alcl,0_{\alcl'} \in \Do$ 
and $0_\alcl\stackrel{\Dmon}{\equiv}0_{\alcl'}$.  
Pick $c \in \alcl$ and $c' \in \alcl'$.
Then $(c,c) \stackrel{\baralpha}{\equiv} (c',c')$, so $(c,c') \in \alpha$, so $\alcl=\alcl'$.
Thus $\Do$ is a transversal for $\Dmon$.

To prove item~\eqref{corD(A):it4}, we must show that for all $(a,b) \in \theta$
we have $(a,b)\stackrel{\Delta}{\equiv}(c,c)$ for some $c \in A$ if and only if
$a=b$; and this follows from Theorem~\ref{thm:D(A)}\eqref{D(A):it1}.  
To prove item~\eqref{corD(A):it2}, define
$h$ by $h(a/\alpha) = ((a,a)/\Delta)/\varphi$.

Finally, if $\theta$ is a minimal congruence, then
Theorem~\ref{thm:D(A)}\eqref{D(A):new} implies
that $D(\m a,\theta)$ is subdirectly irreducible with monolith $\Dmon$.
\end{proof}

\section{The classes of an abelian congruence} 
\label{sec:structure}

Suppose $\m a$ is an algebra in a congruence modular variety,
$\theta$ is an abelian congruence of $\m a$, and $\alpha = (0:\theta)$.
A classical result of Gumm \cite[see 3.3]{gumm-easyway} states that
if $\thcl,\thcl'$ are two $\theta$-classes contained in a common 
$\alpha$-class $\alcl$, 
then $|\thcl|=|\thcl'|$ and in fact
$\Grp a(\theta,e)$ and $\Grp a(\theta,e')$ are isomorphic for any
$e \in \thcl$ and $e' \in \thcl'$.
Moreover, it can be shown that
these groups are all isomorphic to $\Grpd{D(\m a,\theta)}(\Dmon,0_\alcl)$
where $\Dmon$ is the derived congruence of the difference algebra
$D(\m a,\theta)$.
These results extend to varieties with a difference term, as we shall see
in Lemma~\ref{lm:inject} below.
However they fail in general for varieties with a weak difference term, and in
Appendix~\hyperlink{A2}{2} we will show just how badly they can fail there.

Nonetheless, we shall show that in varieties with a weak difference term,
%the difference algebra
$D(\m a,\theta)$ 
and 
%its derived congruence
$\Dmon$
have the following property:
for every $\alpha$-class $\alcl$, the group $\Grpd{D(\m a,\theta)}(\Dmon,0_\alcl)$ forms a sort
of ``universal domain" for all the groups $\Grp a(\theta,e)$ with $e \in \alcl$.

\begin{df} \label{df:lambda-e}
Suppose $\m a$ is an algebra in a variety $\calV$ with a weak difference term,
$\theta$ is an abelian congruence, $\alpha=(0:\theta)$, $\Delta =
\Delta_{\theta,\alpha}$, and $\Dmon = \baralpha/\Delta$.
Given $e \in A$, the map $\embed e:e/\theta \ra D(\m a,\theta)$ is defined by
$\embed e(x)=(x,e)/\Delta$.
\end{df}

\begin{lm} \label{lm:inject}
Suppose $\m a,\calV,\theta,\alpha,\Delta,\Dmon$ are as in Definition~\ref{df:lambda-e}.
\begin{enumerate}
\item \label{inject:it1}
For all $e \in A$, the map $\embed e$
is an embedding of
$\Grp a(\theta,e)$ into $\Grpd{D(\m a,\theta)}(\Dmon,0_{e/\alpha})$.  
\item\label{inject:it2}
If $\calV$ has a difference term, then $\embed e$ is an
isomorphism.
\end{enumerate}
\end{lm}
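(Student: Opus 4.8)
The plan is to check directly that $\embed e$ is a well-defined homomorphism of abelian groups, that it is injective, and (for part~\eqref{inject:it2}) that it is surjective under the difference-term hypothesis.

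For part~\eqref{inject:it1} I would first observe that $\theta\le(0:\theta)=\alpha$ (as $\theta$ is abelian), so for $x\in e/\theta$ we get $x\equiv_\alpha e$, hence $(x,e)\equiv_{\baralpha}(e,e)$; thus $\embed e(x)=(x,e)/\Delta$ really does lie in the $\Dmon$-class of $0_{e/\alpha}=(e,e)/\Delta$, i.e.\ in the universe of $\Grpd{D(\m a,\theta)}(\Dmon,0_{e/\alpha})$. Since $D(\m a,\theta)\in\calV$, the weak difference term $d$ of $\calV$ is one for $D(\m a,\theta)$, and $\Dmon$ is abelian (Corollary~\ref{cor:D(A)}), so the group operation on that class is $X+Y=d(X,0_{e/\alpha},Y)$. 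As $d$ is a term it acts coordinatewise on $\m a(\theta)\le\m a^2$, which gives
\[
\embed e\bigl(d(x,e,y)\bigr)=\bigl(d(x,e,y),\,e\bigr)/\Delta=d\bigl((x,e),(e,e),(y,e)\bigr)/\Delta=d\bigl(\embed e(x),0_{e/\alpha},\embed e(y)\bigr)
\]
for $x,y\in e/\theta$; together with $\embed e(e)=0_{e/\alpha}$ this shows $\embed e$ is a homomorphism. For injectivity: if $\embed e(x)=\embed e(y)$ then $(x,e)\equiv_\Delta(y,e)$, while trivially $(x,e)\equiv_{\eta_2}(y,e)$; since $\Delta\wedge\eta_2=0$ by Theorem~\ref{thm:D(A)}\eqref{D(A):it4c}, we conclude $(x,e)=(y,e)$, i.e.\ $x=y$. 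Hence $\embed e$ is an embedding.

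For part~\eqref{inject:it2} I would assume $d$ is a difference term (so also $d(x,x,y)\approx y$) and show $\embed e$ is onto. Take $z$ in the $\Dmon$-class of $0_{e/\alpha}$ and write $z=(p,q)/\Delta$ with $(p,q)\in\theta$; then $(p,q)\equiv_{\baralpha}(e,e)$ gives $p\equiv_\alpha e$ (hence $q\equiv_\alpha e$), and of course $p\equiv_\theta q$. The natural guess for a preimage is $x:=d(p,q,e)$: since $p\equiv_\theta q$ and $d(q,q,e)=e$, we get $x\equiv_\theta e$, so $x\in e/\theta$ --- this is the only place the difference-term identity is used. To finish I would verify $(x,e)\equiv_\Delta(p,q)$: because $(q,e)\in\alpha$, the pair $\bigl((q,q),(e,e)\bigr)$ lies in the generating set of $\Delta$, so $(q,q)\equiv_\Delta(e,e)$, and applying $d$ coordinatewise in $\m a(\theta)$ with the first two arguments $(p,q),(q,q)$ fixed yields
\[
(x,e)=d\bigl((p,q),(q,q),(e,e)\bigr)\equiv_\Delta d\bigl((p,q),(q,q),(q,q)\bigr)=\bigl(d(p,q,q),\,q\bigr)=(p,q),
\]
using $d(q,q,e)=e$, $d(q,q,q)=q$, and $d(p,q,q)=p$ (the last because $p\equiv_\theta q$ and $d$ restricts to a Maltsev operation on each $\theta$-class). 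Thus $z=(x,e)/\Delta=\embed e(x)$, so $\embed e$ is onto, and with part~\eqref{inject:it1} it is an isomorphism.

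Everything in part~\eqref{inject:it1} is routine once one notices that the homomorphism property is just the coordinatewise action of $d$ and that injectivity follows from $\Delta\wedge\eta_2=0$. The one substantive point --- and the only place the stronger hypothesis is needed --- is the surjectivity argument in part~\eqref{inject:it2}: one has to produce the preimage $x=d(p,q,e)$, note that $d(q,q,e)=e$ keeps it inside $e/\theta$, and then see that the single generating relation $(q,q)\equiv_\Delta(e,e)$ together with the coordinatewise action of $d$ collapses $(d(p,q,e),e)$ to $(p,q)$ modulo $\Delta$.
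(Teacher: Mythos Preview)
Your proof is correct and follows essentially the same strategy as the paper: verify that $\embed e$ respects the group operations via the coordinatewise action of $d$, prove injectivity, and for part~\eqref{inject:it2} exhibit $d(p,q,e)$ as a preimage and show the required $\Delta$-equivalence. The details differ slightly. For injectivity the paper computes directly: from $(x,e)\equiv_\Delta(y,e)$ it applies $d$ to the three pairs $(x,e),(y,e),(y,y)$ to obtain $(x,y)\equiv_\Delta(y,y)$ and then invokes Theorem~\ref{thm:D(A)}\eqref{D(A):it1}; your use of $\Delta\wedge\eta_2=0$ from Theorem~\ref{thm:D(A)}\eqref{D(A):it4c} is cleaner and avoids any element manipulation. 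For surjectivity the paper builds the matrix $\begin{pmatrix} r & d(r,s,e)\\s & e\end{pmatrix}$ directly as a $d$-combination of generators of $M(\theta,\alpha)$, whereas you exploit that $\Delta$ is a congruence of $\m a(\theta)$ and vary only the third argument of $d$ across the generating pair $((q,q),(e,e))$; both reach the same $\Delta$-equivalence.
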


\begin{proof}
\eqref{inject:it1}
Let $\alcl=e/\alpha$.
Clearly $\embed e(e) = 0_\alcl$.  If $(x,e)/\Delta = (y,e)/\Delta$, then 
$x,y,e$ all belong to a common $\theta$-class.  
So
\begin{align*}
(x,e) &\stackequiv{\Delta} (y,e), \quad
(y,e) \stackequiv{\Delta} (y,e), \quad
(y,y) \stackequiv{\Delta} (y,y).
\end{align*}
Applying the weak difference term $d$ to these pairs and using $(x,y),(y,e)\in
\theta$ gives
\[
(x,y)=(d(x,y,y),d(e,e,y)) \stackequiv{\Delta} (d(y,y,y),d(e,e,y)) = (y,y).
\]
By Theorem~\ref{thm:D(A)}\eqref{D(A):it1}, we get
$x=y$.  Thus $\embed e$
is injective.  It is easily checked that $\embed e$ preserves $+$.

\eqref{inject:it2}
Suppose that $d(x,y,z)$ is a difference term, i.e., $\calV \models d(x,x,y)\approx y$.
Given $(r,s)/\Delta \in 0_\alcl/\Dmon$, so $(r,s) \in \theta \cap \alcl^2$,
define $a:=d(r,s,e)$.  Then
$a \stackequiv{\theta} d(r,r,e)=e$, so $a \in e/\theta$.  Observe that
\[
\begin{pmatrix} r&r\\s&s\end{pmatrix},\
\begin{pmatrix} s&s\\s&s\end{pmatrix}, 
\begin{pmatrix} s&e\\s&e\end{pmatrix} \in M(\theta,\alpha).
\]
Also observe that $d(r,s,s)=r$ as $(r,s)\in \theta$.
Thus applying $d(x,y,z)$ to the above matrices gives
\[
\begin{pmatrix} r & a\\s & e\end{pmatrix} =
\begin{pmatrix} d(r,s,s) & d(r,s,e)\\d(s,s,s) & d(s,s,e)\end{pmatrix} \in M(\theta,\alpha)\subseteq \Deltah(\theta,\alpha).
\]
Hence 
$(r,s)\stackrel\Delta\equiv (a,e)$,
so $\embed e(a) = (r,s)/\Delta$, proving $\embed e$ is surjective.
\end{proof}

%\begin{cor} \label{cor:inject}
%Suppose $\m a$ is an algebra in a variety with a difference term,
%%$\theta$ is an abelian congruence, $\alpha=(0:\theta)$, and $(e,e')
%\in \alpha$.  Then $\Grp a(\theta,e)\cong \Grp a(\theta,e')$.
%\end{cor}
%
%\begin{proof}
%%By Lemma~\ref{lm:inject}\eqref{inject:it2}, $\embed{e'}^{-1}
%\circ\embed e$ is an isomorphism.
%\end{proof}

\begin{lm}\label{lm:samerange}
Suppose $\m a,\calV,\theta,\alpha,\Delta$ are as in Definition~\ref{df:lambda-e}.
\begin{enumerate}
\item \label{samerange:it1}
If $a\stackrel\theta\equiv b \stackrel\theta\equiv e$, then
$(a,b) \stackrel\Delta\equiv (d(a,b,e),e)$.
\item \label{samerange:new}
If $a\stackrel\theta\equiv b$ and
$a \stackrel\alpha\equiv c\stackrel\alpha\equiv e$,
then $(d(a,e,c),d(b,e,c))\stackrel\Delta\equiv (a,b)$.
\item
\label{samerange:it1.5}
For all $e \in A$, $\ran(\embed e) = \thcl^2/\Delta$ where $\thcl=e/\theta$.
%\item \label{samerange:it2}
%For all $(e,e') \in \theta$ we have $\ran(\embed e) = \ran(\embed{e'})$.
\end{enumerate}
\end{lm}

\begin{proof}
\eqref{samerange:it1}
Let $x = d(a,b,e)$.  
Observe that
\[
\begin{pmatrix} a&a\\b&b\end{pmatrix},\
\begin{pmatrix} b&b\\b&b\end{pmatrix}, 
\begin{pmatrix} e&b\\e&b\end{pmatrix} \in M(\theta,\theta). 
\]
Applying $d$ to these matrices gives
\[
\begin{pmatrix} x & a\\e & b\end{pmatrix} =
\begin{pmatrix} d(a,b,e) & d(a,b,b)\\d(b,b,e) & d(b,b,b)\end{pmatrix} \in M(\theta,\theta)
\subseteq M(\theta,\alpha)\subseteq \Deltah(\theta,\alpha).
\]
Hence $(x,e) \stackequiv{\Delta} (a,b)$ as required.

\eqref{samerange:new} This is proved similarly to \eqref{samerange:it1},
by applying $d$ to the matrices
\[
\begin{pmatrix} a&a\\b&b\end{pmatrix},\quad 
\begin{pmatrix} e&a\\e&a\end{pmatrix},\quad 
\begin{pmatrix} c&a\\c&a\end{pmatrix}.
\]

\eqref{samerange:it1.5}
Clearly $\ran(\embed e)\subseteq \thcl^2/\Delta$.  Let $a,b \in \thcl$.
Then $d(a,b,e)\in \thcl$ and $\embed e(d(a,b,e)) = 
(d(a,b,e),e)/\Delta = (a,b)/\Delta$ by \eqref{samerange:it1},
proving $\thcl^2/\Delta\subseteq \ran(\embed e)$.
%
%\eqref{samerange:it2}
%follows from \eqref{samerange:it1.5}.
\end{proof}

The previous lemma justifies the next definition.

\begin{df} \label{df:Brange}
Suppose $\m a,\calV,\theta,\alpha,\Delta$ are as in Definition~\ref{df:lambda-e}.
If $\thcl$ is a $\theta$-class and $\alcl$ is the $\alpha$-class containing $\thcl$,
then let $\ran(\thcl)$ denote 
the subgroup of $\Grpd{D(\m a,\theta)}(\Dmon,0_\alcl)$ with universe
$\thcl^2/\Delta$ (equivalently, with universe 
$\ran(\embed e)$ for any $e \in \thcl$).
We call $\ran(\thcl)$ the \emph{range} of $\thcl$.
\end{df}

The next definitions and lemma contain an important tool that will help us
understand the arrangement of ranges of $\theta$-classes in their
corresponding $\Grpd {D(\m a,\theta)}(\Dmon,0_\alcl)$.

\begin{df} \label{df:dE-trans}
Suppose $\m a$ is an algebra with a weak difference term $d$, and 
$\alcl\subseteq A$.
\begin{enumerate}
\item
A \emph{basic positive $(d,\alcl)$-translation} is a unary polymial of $\m a$ of the
form $d(x,e,e')$ or $d(e,e',x)$ with $e,e' \in \alcl$.
\item
A \emph{positive $(d,\alcl)$-translation} is any composition of zero or more
basic positive $(d,\alcl)$-translations.  (The composition of zero 
unary polynomials is $\mathrm{id}_A$.)
\end{enumerate}
\end{df}

\begin{df} \label{df:arrow}
Suppose $\m a$ is an algebra with a weak difference term $d$,
$\theta \in \Con(\m a)$ is abelian, and $\alpha=(0:\theta)$.
Let $\thcl,\thcl'$ be $\theta$-classes contained in a common $\alpha$-class
$\alcl$.  
We write $\thcl\ra\thcl'$ to mean there exists a positive 
$(d,\alcl)$-translation
$f$ such that  $f(\thcl)\subseteq \thcl'$.  We call such $f$ a
\emph{witness} to $\thcl\ra\thcl'$.
\end{df}

Thus $\ra$ is a reflexive transitive directed edge relation on $A/\theta$
which does not connect $\theta$-classes from different $\alpha$-classes.
Note that $\ra$ depends on the choice of $d$.
%If $d$ is a difference term, then clearly $\thcl\ra\thcl'$
%for all $\theta$-classes $\thcl,\thcl'$ contained in a common $\alpha$-class.
%If $d$ is just a weak difference term, then the relation $\ra$ can be more
%complicated.

\begin{lm} \label{lm:arrow}
Suppose $\m a,\calV,\theta,\alpha,\Delta$ are as in Definition~\ref{df:lambda-e}.
Fix a weak difference term $d$ for $\calV$.
Let $\alcl$ be an $\alpha$-class and let $\thcl,\thcl'$ be $\theta$-classes
contained in $\alcl$.
\begin{enumerate}
\item \label{arrow:it1.5}
If $\thcl\ra\thcl'$ witnessed by $f$, then 
$\ran(\thcl)\subseteq \ran(\thcl')$, and
$(f(a),f(b)) \stackrel\Delta\equiv (a,b)$ for all $a,b \in \thcl$.
%\item \label{arrow:it2}
%If $\thcl\ra \thcl'$, then 
%$\ran(\thcl)\subseteq \ran(\thcl')$.
\item \label{arrow:it3}
%For all $\theta$-classes $\thcl_1,\thcl_2$ contained in a common $\alpha$-class $\alcl$,
There exists a $\theta$-class $\thcl''\subseteq \alcl$ with $\thcl\ra \thcl''$ and $\thcl'\ra \thcl''$.
%\item \label{arrow:itdiffterm}
%If $d$ is a difference term, then 
%$\thcl\ra \thcl'$.
\item \label{arrow:it4}
If $\thcl\ra \thcl'$, then for all $e \in \thcl$ and $e' \in \thcl'$ there 
exists a witness $f$
%$f \in \Pol_1(\m a)$ 
to $\thcl\ra \thcl'$ satisfying $f(e)=e'$.
\item \label{arrow:it5}
If $d$ is a difference term for $\m a$, then $\thcl_1\ra \thcl_2$ for all 
$\theta$-classes $\thcl_1,\thcl_2$ contained in a common $\alpha$-class.
\end{enumerate}
\end{lm}

\begin{proof}
\eqref{arrow:it1.5} 
It suffices to prove this
for basic positive
$(d,\alcl)$-translations.  Assume $f(x)=d(x,e,e')$ with $e,e' \in E$ and 
$f(\thcl)\subseteq \thcl'$.  
For all $a,b \in \thcl$,
$(f(a),f(b))\stackrel\Delta\equiv (a,b)$ follows from Lemma~\ref{lm:samerange}\eqref{samerange:new}.  This also proves $\thcl^2/\Delta \subseteq (\thcl')^2/
\Delta$, so $\ran(\thcl)\subseteq \ran(\thcl')$.
A symmetrical proof works if $f(x) = d(e,e',x)$.

\eqref{arrow:it3} 
%Given $\thcl_1,\thcl_2$ contained in the common $\alpha$-class $\alcl$,
Choose $e \in \thcl$ and $e' \in \thcl'$
and let $\thcl''$ be the $\theta$-class containing
$d(e,e,e')$.  $\thcl''\subseteq \alcl$ by idempotence of $d$.  Define
$f_1(x) = d(x,e,e')$ and $f_2(x) = d(e,e,x)$.  Then 
$f_1,f_2$ witness $\thcl\ra\thcl''$ and $\thcl'\ra\thcl''$ respectively.

%\eqref{arrow:itdiffterm}
%If $d$ is a difference term, then in the proof of
%\eqref{arrow:it3} we will have $d(e_1,e_1,e_2)=e_2$ and so $\thcl'=\thcl_2$.

\eqref{arrow:it4}
Suppose $f$ witnesses $\thcl\ra \thcl'$, $e \in \thcl$, and $e' \in \thcl'$.  
Let $g(x) = d(x,f(e),e')$ and $h = g\circ f$.  Then $h$ witnesses
$\thcl\ra\thcl'$ and $h(e) = e'$.

\eqref{arrow:it5}
Taking $e_i \in \thcl_i$, the polynomial
$f(x)=d(x,e_1,e_2)$ will witness $\thcl_1\ra \thcl_2$, since
$\calV\models d(x,x,y) \approx y$.
\end{proof}

\begin{cor} \label{cor:directed}
Suppose $\m a,\calV,\theta,\alpha,\Delta,\Dmon$ are as in Definition~\ref{df:lambda-e}.  Let $\alcl \in A/\alpha$.
\begin{enumerate}
\item \label{directed:it1}
The set
$S_\alcl:=\set{\ran(\thcl)}{$\thcl \in A/\theta$ and $\thcl\subseteq \alcl$}$
is a directed set of subgroups of $\Grpd{D(\m a,\theta)}(\Dmon,0_\alcl)$
whose union is $\Grpd{D(\m a,\theta)}(\Dmon,0_\alcl)$.
\item \label{directed:it2}
Hence if $\alcl$ is finite, there exists 
$\thcl \in \alcl/\theta$
with $\ran(\thcl)=\Grpd{D(\m a,\theta)}(\Dmon,0_\alcl)$.
\item \label{directed:it3}
If $\calV$ has a difference term, then $S_{\alcl} = \{\Grpd{D(\m a,\theta)}(\Dmon,0_\alcl)\}$.
\end{enumerate}
\end{cor}

\begin{proof}
For all $\thcl_1,\thcl_2 \in A/\theta$ with $\thcl_1,\thcl_2\subseteq \alcl$, there exists $\thcl
\in A/\theta$ with 
$\thcl\subseteq \alcl$ and
$\ran(\thcl_1)\cup \ran(\thcl_2)\subseteq \ran(\thcl)$ by
Lemma~\ref{lm:arrow}; this means $S_\alcl$ is directed.
To show that the union of $S_\alcl$ is all of $\Grpd{D(\m a,\theta)}(\Dmon,0_\alcl)$,
let $(a,b)/\Delta \in \Grpd{D(\m a,\theta)}(\Dmon,0_\alcl)$.  Let $\thcl=b/\theta$;
then $a \in \thcl$ so $(a,b)/\Delta \in
\ran(\thcl) \subseteq \bigcup S_\alcl$.
This proves \eqref{directed:it1} and \eqref{directed:it2}.  
Item \eqref{directed:it3}
follows from Lemma~\ref{lm:inject}\eqref{inject:it2}.
\end{proof}

As mentioned at the start of this section,
if $\m a,\calV,\theta,\alpha,\Delta,\Dmon$ are as in 
Definition~\ref{df:lambda-e} and $\calV$ is congruence modular,
or more generally has a difference term,
then it follows from Lemma~\ref{lm:inject}\eqref{inject:it2} that
all $\theta$-classes in a common $\alpha$-class
have the same cardinality.
As we will show in Appendix~\hyperlink{A2}{2},
this uniformity is no longer
to be expected in varieties with a weak difference term.  
However, we do know of one nontrivial restriction on the sizes of the
classes of an abelian \emph{minimal} congruence
$\theta$, when the algebra is idempotent and $(0:\theta)=1$.

\begin{lm} \label{lm:idem}
Suppose $\m a$ is a finite idempotent
algebra in a variety with a Taylor term, $\theta$ is an
abelian minimal congruence, and $(0:\theta)=1$.  Let $q$ be the maximal size of a $\theta$-class.
Then every $\theta$-class has size $1$ or $q$.
\end{lm}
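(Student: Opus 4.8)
The plan is to use the machinery of Section~\ref{sec:structure}, together with the hypothesis that $\m a$ is idempotent and $(0:\theta)=1$, to show that any two $\theta$-classes of \emph{maximal} size are related by the relation $\ra$ in both directions, and in fact that a class of maximal size ``absorbs'' every other class in the sense of $\ra$. Since $(0:\theta)=1$, there is a single $\alpha$-class $C=A$, so by Corollary~\ref{cor:directed}\eqref{directed:it2} and finiteness there is some $e_0\in A$ with $\ran(\embed{e_0})=\Grpd{D(\m a,\theta)}(\Dmon,0_A)$; let $B_0=e_0/\theta$, a class of some size; I will argue $|B_0|=q$ and that every class maps into $B_0$ under $\ra$. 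First I would observe that for every $\theta$-class $B$ we have $B\ra B_0$: given any $B$, Lemma~\ref{lm:arrow}\eqref{arrow:it3} produces $B'$ with $B\ra B'$ and $B_0\ra B'$, but then $\ran(B_0)\subseteq\ran(B')\subseteq\Grpd{D(\m a,\theta)}(\Dmon,0_A)=\ran(B_0)$, so $\ran(B')=\ran(B_0)$; I would then want to upgrade equality of ranges to $B'\ra B_0$. This is the first technical point: a witness $f$ to $B_0\ra B'$ maps $B_0$ \emph{onto} a subset of $B'$ whose ``$\Delta$-image'' is all of $\ran(B_0)=\ran(B')$, and since $\embed{e}$ is injective (Lemma~\ref{lm:inject}\eqref{inject:it1}), $f$ restricted to $B_0$ is a bijection onto $B'$; the inverse bijection is realized by a polynomial by Corollary~\ref{cor:maltsev}, giving $B'\ra B_0$ and hence $B\ra B_0$ by transitivity. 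In particular $|B|\le|B_0|$ for all $B$, so $|B_0|=q$.

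Next, the heart of the argument: I want to show that \emph{every} class of size $q$ has the same range as $B_0$, equivalently (by the bijection argument just sketched) that $B\ra B_0$ and $B_0\ra B$ whenever $|B|=q$. Half of this is already done ($B\ra B_0$ always). For the other direction, fix $B$ with $|B|=q$ and take the $B'$ from Lemma~\ref{lm:arrow}\eqref{arrow:it3} applied to $B$ and $B_0$: we have $B\ra B'$ and $B_0\ra B'$, and a witness $f$ to $B\ra B'$ has $f\restrict B$ injective (again by injectivity of $\embed{e}$, since $f$ preserves $\Delta$-classes on $B$), so $|B'|\ge|B|=q$, forcing $|B'|=q$ and $f\restrict B$ a bijection $B\to B'$; invert it polynomially via Corollary~\ref{cor:maltsev} to get $B'\ra B$, and combine with $B_0\ra B'$ to get $B_0\ra B$. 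Together with $B\ra B_0$ this shows $\ran(B)=\ran(B_0)$.

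It remains to handle classes $B$ with $1<|B|<q$ and rule them out — this is where idempotence and the Taylor term hypothesis must finally be used, and I expect it to be the main obstacle. The idea is that a class $B$ with $1<|B|$ carries a nontrivial elementary abelian $p$-group structure (Theorem~\ref{thm:finite-vecspace}), and since $B\ra B_0$, the subgroup $\ran(B)$ of $\Grpd{D(\m a,\theta)}(\Dmon,0_A)$ is a nonzero $\bbZ_p$-subspace; the absorbing class $B_0$ of size $q$ realizes the whole space. The plan is to apply an absorption/WNU argument: because $\m a$ is finite idempotent with a Taylor term, one can use the WNU term $w$ (Theorem~\ref{thm:locfin}) on a tuple mixing an element of a size-$1$ class with elements of $B$, or on a tuple within a partial transversal, to produce a polynomial witnessing $B_0\ra B$ — i.e. collapsing the big class onto $B$ — which combined with $B\ra B_0$ would force $|B|=q$, a contradiction unless $|B|=1$. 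Concretely, I would pick a transversal $D$ of $\theta$ and look at the polynomial $g(x)=w(x,d_2,\ldots,d_n)$ or similar built from $w$ and fixed transversal elements; idempotence guarantees $g$ fixes $D$ pointwise (hence preserves $\theta$-classes), and the WNU identities together with abelianness of $\theta$ and the two-term condition (Lemma~\ref{lm:2TC}, Proposition~\ref{prp:TC2}) should force $g$ to act on each $\theta$-class $B$ as an endomorphism of $\Grp a(\theta,e)$ whose image has size equal to $|B'|$ for a ``meet'' class $B'$ — and by choosing the transversal elements to lie in classes whose ranges span everything, one can drive the image to have the range of $B_0$, yielding $B\ra$ (something of size $q$) and the desired collapse. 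The delicate part is producing a polynomial that \emph{both} lands inside the small class $B$ \emph{and} witnesses an arrow from a size-$q$ class, since Lemma~\ref{lm:arrow}\eqref{arrow:it3} only gives a common upper bound, not a common lower bound, in the $\ra$ order; overcoming this asymmetry — probably by exploiting that the range poset is a lattice of subspaces and that idempotent Taylor algebras have no proper nontrivial ``absorption-free'' behavior — is where the real work lies.
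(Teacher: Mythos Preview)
Your plan has a genuine gap at exactly the point you flag as ``where the real work lies.'' You never produce an argument ruling out $1<|B|<q$; you gesture at WNU/absorption but give no mechanism. Worse, your concrete suggestion rests on a misreading of idempotency: $g(x)=w(x,d_2,\ldots,d_n)$ does \emph{not} fix a transversal $D$ pointwise --- idempotency only gives $w(d,d,\ldots,d)=d$, not $w(d,d_2,\ldots,d_n)=d$ for distinct $d_i$. The earlier ``invert the bijection by Corollary~\ref{cor:maltsev}'' step is also unjustified: that corollary matches a polynomial at \emph{two} prescribed points, not across an entire $\theta$-class, so it does not furnish a polynomial inverse to $f\restrict{B_0}$, and hence does not give $B'\ra B_0$ from $B_0\ra B'$.

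The paper's argument is short and uses idempotency in a way you miss entirely. Suppose $1<|B|<q$ with $0\in B$, and let $\Bmax$ have size $q$ with $e\in\Bmax$. Since each $\embed{e'}$ is injective and $|B|<q=|\Bmax|$, we have $\ran(\embed{0})\subsetneq D(\m a,\theta)=\ran(\embed{e})$. Pick $u\in\Bmax$ with $\embed{e}(u)\notin\ran(\embed{0})$, and pick $b\in B\setminus\{0\}$ and $b'\in\Bmax$ with $\embed{0}(b)=\embed{e}(b')$. By Corollary~\ref{cor:maltsev} there is $f\in\Pol_1(\m a)$ with $f(e)=e$ and $f(b')=u$; write $f(x)=t(x,\tup c)$ for a term $t$. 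Now the key move: since $(0:\theta)=1$, each $(c_i,c_i)\stackrel{\Delta}{\equiv}(0,0)$; and since $\m a$ is \emph{idempotent}, $t(0,\ldots,0)=0$, so $u':=t(b,0,\ldots,0)\in B$. Applying $t$ coordinatewise to $(b',e)\stackrel{\Delta}{\equiv}(b,0)$ together with $(c_i,c_i)\stackrel{\Delta}{\equiv}(0,0)$ gives $(u,e)\stackrel{\Delta}{\equiv}(u',0)$, i.e.\ $\embed{e}(u)=\embed{0}(u')\in\ran(\embed{0})$, a contradiction. The whole point is that idempotency lets you replace \emph{all} parameters of an arbitrary polynomial by a single fixed $0\in B$ while staying in $B$, and $(0:\theta)=1$ guarantees this substitution preserves $\Delta$-classes. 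No WNU, no absorption, no $\ra$ gymnastics are needed.
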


\begin{proof}
$\HSP(\m a)$ has a weak
difference term by finiteness of $\m a$ and Theorem~\ref{thm:locfin}.
Pick a $\theta$-class $\Bmax$ of size $q$.  Suppose there exists a $\theta$-class 
$\thcl$ with $1<|\thcl|<q$.  Let $\Delta=\Delta_{\theta,1}$.
Fix $e \in \Bmax$ and $0 \in \thcl$.  We have 
$\ran(C)\subset A(\theta)/\Delta = \ran(\Bmax)$ by 
Lemma~\ref{lm:inject} and Corollary~\ref{cor:directed}.
Fix $b \in \thcl\setminus \{0\}$ and let $b' \in \Bmax\setminus \{e\}$ be such that
$\embed{0}(b)=\embed e(b')$.
Also pick $u \in \Bmax$ with $\embed e(u) \not\in \ran(C)$.

By Corollary~\ref{cor:maltsev}, 
there exists a unary polynomial $f \in \Pol_1(\m b)$
with $f(b')=u$ and $f(e)=e$.  Choose a $(1+n)$-ary term $t(x,\tup y)$ and $\tup c \in A^n$
so that $f(x) = t(x,\tup c)$.  Let $u' = t(b,0,\ldots,0)$.  We have the following:
\[
(b',e) \stackrel{\Delta}{\equiv} (b,0), \quad
(c_1,c_1) \stackrel{\Delta}{\equiv} (0,0), \quad \ldots,\quad
(c_n,c_n) \stackrel{\Delta}{\equiv} (0,0)
\]
where the first congruence holds because $\embed e(b')=\embed 0(b)$ while the
remaining congruences are because $(0:\theta)=1$.
Now applying $t$ coordinatewise and using idempotency gives
%\[
$(u,e) \stackrel{\Delta}{\equiv} (u',0)$
%\]
which contradicts our choice of $u$.
\end{proof}

\section{The division ring of an abelian minimal congruence} \label{sec:divring}

Let $\m a$ be an algebra with a weak difference term, let $\theta$ be an
abelian congruence, and assume that $\theta$ is \emph{minimal},
i.e., $0\prec \theta$.
In this section we will 
explain in detail how to construct a division ring
$\bbF_\theta$ and equip each $\theta$-class with the structure of a
left vector space over $\bbF_\theta$.
One construction of $\bbF_\theta$ was sketched by Freese 
in \cite[Section 4]{freese1983}
assuming $\m a$ belongs to a congruence modular variety.
Our construction, which uses the difference algebra of $\theta$,
is simpler and better adapted to varieties which have a weak
difference term but no difference term.

\subsection{Easy case}
We first consider an easy case.
Throughout this subsection we make the following blanket assumptions:

\begin{assum}\label{assum:easycase}
$\calV$ is a variety with
a weak difference term, $\m d\in \calV$, $\varphi \in \Con\m d$, $0\prec\varphi$,
$\varphi$ is abelian, and $\m d$ has a subuniverse $T\leq \m d$ which is a
transversal for $\varphi$.
\end{assum}

\begin{df}
Given Assumption~\ref{assum:easycase}, 
\begin{align*}
F =F_{\m d,\varphi,T} &:= \set{\lambda \in \End(\m d)}{$\lambda(\varphi)
\subseteq \varphi$ and $\lambda(e)=e$ for all $e \in T$}.\\
0=0_{\m d,\varphi,T}&:=  \mbox{the retraction $\m d\ra \m t$ given by
$x \stackrel\varphi\equiv 0(x) \in T$.}\\
1&:= \mathrm{id}_D
\end{align*}

If $\lambda,\mu \in F$, then $\lambda+\mu:D\ra D$ and $-\lambda:D\ra D$ are defined by
\begin{align*}
(\lambda+\mu)(x) &:= d(\lambda(x),0(x),\mu(x)).\rule{2.35in}{0in}\\
(-\lambda)(x) &:= d(0(x),\lambda(x),0(x))
\end{align*}
where $d$ is a weak difference term for $\calV$. (Note that this definition does not
depend on the choice of $d$, by Lemma~\ref{lm:samewdt}.)
\end{df}

\begin{lm} \label{lm:ring-easycase}
Given Assumption~\ref{assum:easycase}:
\begin{enumerate}
\item \label{ringeasy:it1}
If $\lambda,\mu \in F$, then $\lambda+\mu \in F$, $-\lambda \in F$, and $\lambda\circ\mu
\in F$.
\item \label{ringeasy:it2}
$(F,+,-,\circ,0,1)$ is a division ring.
%\item \label{ringeasy:it3}
%If $\m d$ is subdirectly irreducible, then
%\[
%%F = \set{\lambda \in \End(\m d)}{$\lambda(e)=e$ for all $e \in T$}.
%\]
\end{enumerate}
\end{lm}

\begin{proof}
\eqref{ringeasy:it1} Let $\lambda,\mu \in F$.
Clearly $\lambda\circ\mu\in F$.   Put $\sigma := \lambda+\mu$.
Fix a $\varphi$-class $\thcl$, let $e$ be the unique element of $\thcl \cap T$,
and observe that $\lambda(\thcl)\subseteq \thcl$ and $\mu(\thcl)\subseteq\thcl$
while $0(\thcl)=\{e\}$.
Thus $\sigma(\thcl) = d(\lambda(\thcl),e,\sigma(\thcl))\subseteq \thcl$
and $\sigma(e)=d(\lambda(e),e,\sigma(e)) = d(e,e,e)=e$, which proves
$\sigma(\varphi)\subseteq \varphi$ and $\sigma|_T=\mathrm{id}_T$.

To prove $\sigma \in \End(\m d)$, let $f$ be a $k$-ary basic operation
and let $\tup a=(a_1,\ldots,a_k)\in D^k$.  Let $a'=f(\tup a)$, for each $
i \in [k]$ let $e_i=0(a_i) \in T$ and let $e' = 0(a') \in T$.  Thus
$(a',e') \in \varphi$ and $(a_i,e_i) \in \varphi$ for all $i\in [k]$.
Observe that $f(\tup e) \stackrel\varphi\equiv f(\tup a) 
=a'\stackrel\varphi\equiv e'$ and
$f(\tup e) \in T$ (since $T\leq \m d$), which implies $f(\tup e)=e'$.
Let $\lambda(\tup a):= (\lambda(a_1),\ldots,\lambda(a_k))$ and
$\mu(\tup a):= (\mu(a_1),\ldots,\mu(a_k)$.
Since $\varphi$ is abelian, we can apply 
Proposition~\ref{prp:TC2} to $f$ and $\lambda(a_i)
\stackrel{\varphi}\equiv e_i \stackrel{\varphi}\equiv \mu(a_i)$ for
$i \in [k]$ to get
\[
d(f(\lambda(\tup a)),f(\tup e),f(\mu(\tup a)))
= f(d(\lambda(a_1),e_1,\mu(a_1),\ldots,d(\lambda(a_k),e_k,\mu(a_k)))).
\]
Using $f(\lambda(\tup a))=\lambda(f(\tup a))$ and $f(\mu(\tup a))=
\mu(f(\tup a))$, the above equation simplifies to
\[
\sigma(f(\tup a))= f(\sigma(a_1),\ldots,\sigma(a_k)),
\]
proving $\sigma \in \End(\m d)$.  Thus $\sigma \in F$, and a similar proof
shows $-\lambda \in F$.  

\eqref{ringeasy:it2}
Let $\lambda,\mu \in F$.
Observe that for each $\varphi$-class $\thcl$, if $e$ is the unique element
of $\thcl\cap T$, then $\lambda\restrict \thcl$ and $\mu\restrict \thcl$
are endomorphisms of the abelian
group $\Grp d(\varphi,e)$, $0\restrict \thcl$ is the constant zero endomorphism
of this group, and $(\lambda+\mu)\restrict \thcl$ is the pointwise sum of
$\lambda\restrict \thcl$ and $\mu\restrict \thcl$ in this group.  Thus
the map $\Phi:F\ra \prod_{e \in T}\End(\Grp d(\varphi,e))$ given by
$\Phi(\lambda)_e = \lambda\restrict\thcl$ where $\thcl=e/\varphi$, is a unital
embedding of $(F,+,-,\circ,0,1)$ into the direct product of the endomorphism
rings of the groups $\Grp d(\varphi,e)$ for $e \in T$.  Hence
$(F,+,-,\circ,0,1)$ is a unital ring.  

It remains to show that each
element of $F\setminus \{0\}$ is invertible.
Let $\lambda \in F\setminus \{0\}$.  One can easily check that
$T\leq \ran(\lambda)\leq \m d$ and $T\leq \lambda^{-1}(T)\leq \m d$.
Observe that $T$ is a maximal proper subuniverse of $\m d$ by Lemma~\ref{lm:poly}.
Because $\lambda\ne 0$, we get $\ran(\lambda)\ne T$  and 
$\lambda^{-1}(T)\ne D$.  Hence $\ran(\lambda)=D$ and
$\lambda^{-1}(T)=T$.  Thus $\lambda$ is surjective, and $\lambda\restrict \thcl$
has trivial kernel for each $\varphi$-class $\thcl$, so $\lambda$ is injective 
as well.  Letting $\lambda^{-1}$ be the functional inverse of $\lambda$, we can
easily check that $\lambda^{-1} \in F$.
\end{proof}

\begin{df}
Given Assumption~\ref{assum:easycase}, we 
denote the division ring $(F,+,-,\circ,0,1)$ just constructed by 
$\bbF_{\m d,\varphi,T}$.
\end{df}

\begin{cor}
Given Assumption~\ref{assum:easycase}, let $\bbF:= \bbF_{\m d,\varphi,T}$.
For each $e \in T$, 
the group $\Grp d(\varphi,e)$ becomes  
a left $\bbF$-vector space $\FGrp d(\varphi,e)$
via the action
$\lambda\cdot a := \lambda(a)$.
\end{cor}

\subsection{The general case}
In this subsection we revoke Assumption~\ref{assum:easycase}.
We will instead assume only that $\calV$ is a variety with a weak
difference term, $\m a$ is an algebra in $\calV$, and $\theta$ is an abelian
minimal congruence of $\m a$.  
Let $\m d=D(\m a,\theta)$ be the difference algebra for $\theta$, let
$\varphi = \baralpha/\Delta$ be the derived congruence, where 
$\alpha=(0:\theta)$ and $\Delta=\Delta_{\theta,\alpha}$, and let
$\Do = \set{0_\alcl}{$\alcl\in A/\alpha$}$ be the canonical transversal.
By Corollary~\ref{cor:D(A)}, $\m d$ is subdirectly irreducible, $\varphi$
is its monolith, $\varphi$ is abelian,
and $\Do$ is a transversal for $\varphi$ which is a subuniverse of $\m d$.
Thus we can appeal to the previous subsection to obtain the division ring
$\bbF_{\m d,\varphi,\Do}$.
In this section we will explain how $\bbF_{\m d,\varphi,\Do}$ acts naturally
on each group $\Grp a(\theta,e)$ for any $e \in A$.  The next lemma
explains the crucial fact needed.

\begin{lm} \label{lm:ran-sub}
Suppose $\calV$ is a variety with a weak difference term, $\m a\in \calV$,
$\theta$ is an abelian minimal congruence of $\m a$, and 
$\alpha=(0:\theta)$.  Let 
$\m d:=D(\m a,\theta)$
be the difference algebra for $\theta$, let $\varphi$ be the derived congruence,
let $\Do$ be the canonical transversal, and let $\bbF:= \bbF_{\m d,\varphi,\Do}$
be the division ring defined in the previous subsection.  Let $\alcl$ be an
$\alpha$-class.  For every $\theta$-class $\thcl$ contained in $\alcl$,
$\ran(\thcl)$ is an $\bbF$-subspace of the vector space 
$\FGrp d(\varphi,0_\alcl)$.
\end{lm}

\begin{proof}
Let $\Delta=\Delta_{\theta,\alpha}$.
Fix an $\alpha$-class $\alcl$ and a $\theta$-class $\thcl\subseteq \alcl$.
By Definition~\ref{df:Brange}, we only need to show that $\ran(\thcl)$ is
closed under the action of $\bbF$.
Let $a,e \in \thcl$, so $q:=(a,e)/\Delta$ is a typical element of $\ran(\thcl)$.
Let $\lambda \in \bbF$.  
If $a=e$, then $q = 0_\alcl \in \Do$ and so $\lambda(q)=q \in
\ran(\thcl)$ as required.  For the remainder of the proof, assume $a\ne e$.

Choose $(b,u) \in \theta$ with $\lambda((a,e)/\Delta)=(b,u)/\Delta$ and 
let $\thcl_0 = u/\theta$.
By Corollary~\ref{cor:directed}, we can choose a $\theta$-class $\thcl'$
with $\ran(\thcl)\cup \ran(\thcl_0)\subseteq \ran(\thcl')$.  Fix 
$e' \in \thcl'$ and recall that $\ran(\thcl')=\ran(\embed {e'})$.
Hence there exist $a',b' \in \thcl'$ with $(a',e')/\Delta = (a,e)/\Delta = q$
and $(b',e')/\Delta = (b,u)/\Delta = \lambda(q)$.

Since $(a,e)\stackrel\Delta\equiv (a',e')$ and $a\ne e$, we get $a'\ne e'$
by Theorem~\ref{thm:D(A)}\eqref{D(A):it1}.  
Thus $(a,e)\in \cg(a',e')$ by minimality of $\theta$, and so 
Corollary~\ref{cor:maltsev}
gives a polynomial $f \in \Pol_1(\m a)$ with $f(a')=a$ and $f(e')=e$.
Observe that $f(\thcl')\subseteq \thcl$.

Let $b^\ast = f(b') \in \thcl$.  We will show $\lambda(q)=(b^\ast,e)/\Delta$,
which will finish the proof.  Choose a term $t(x,\tup y)$ and parameters
$c_1,\ldots,c_n \in A$ so that $f(x)=t(x,\tup c)$.  For each $i\in [n]$
let $\alcl_i=c_i/\alpha$.  
Then calculating in $\m d$,
\begin{align*}
t(q,0_{\alcl_1},\ldots,0_{\alcl_n}) &= 
t((a',e')/\Delta,(c_1,c_1)/\Delta,\ldots,(c_n,c_n)/\Delta)\\
&= (t(a',\tup c),t(e',\tup c))/\Delta\\
&= (a,e)/\Delta=q.
\end{align*}

Recall that $\lambda \in \bbF$; in particular, $\lambda \in \End(\m d)$
and $\lambda(0_{\alcl'})=0_{\alcl'}$ for every $\alpha$-class $\alcl'$.
Thus
\begin{align*}
\lambda(q) &= \lambda(t(q,0_{\alcl_1},\ldots,0_{\alcl_n}))\\
&= t(\lambda(q),0_{\alcl_1},\ldots,0_{\alcl_n}) \\
&= t((b',e')/\Delta,(c_1,c_1)/\Delta,\ldots,(c_n,c_n)/\Delta)\\
&= (t(b',\tup c),t(e',\tup c))/\Delta\\
&= (b^\ast,e)/\Delta
\end{align*}
as required.
\end{proof}

As a consequence, in the context of Lemma~\ref{lm:ran-sub}, we can ``translate"
the $\bbF$-vector space action of the vector space $\FGrp d(\varphi,0_\alcl)$
to the abelian groups $\Grp a(\theta,e)$ via $\embed e$ and
$\embed e^{-1}$, for each $e \in \alcl$.

\begin{cor} \label{cor:ran-sub}
Suppose $\calV$ is a variety with a weak difference term, $\m a\in \calV$,
$\theta$ is an abelian minimal congruence of $\m a$, and 
$\alpha=(0:\theta)$.  Let 
$\m d:=D(\m a,\theta)$
be the difference algebra for $\theta$, let $\varphi$ be the derived congruence,
let $\Do$ be the canonical transversal, and let $\bbF:= \bbF_{\m d,\varphi,\Do}$.  For every $\theta$-class $\thcl$ and every $e \in \thcl$, the rule
\[
\lambda\cdot a := (\embed e^{-1}\circ\lambda\circ\embed e)(a),\quad
\lambda \in \bbF,~a\in \thcl
\]
defines a left $\bbF$-vector space action on $\Grp a(\theta,e)$.
With respect to this action, $\embed e$ is a left $\bbF$-vector space
embedding of $\FGrp a(\theta,e)$ into $\FGrp d(\varphi,0_\thcl)$.
\end{cor}

\begin{df}
Suppose $\calV$ is a variety with a weak difference term, $\m a \in \calV$,
and $\theta$ is an abelian minimal congruence of $\m a$.  
\begin{enumerate}
\item
The \emph{division
ring associated to $\theta$} is the division ring $\bbF_\theta:= 
\bbF_{\m d,\varphi,\Do}$ specified in the previous corollary.
\item
For each $e \in A$, the \emph{canonical
action of $\bbF_\theta$ on $\Grp a(\theta,e)$} is the action defined in the
previous corollary.
\end{enumerate}
\end{df}

The following result will be useful.

\begin{prp} \label{prp:samefield}
Suppose $\calV$ is a variety with a weak difference term, $\m a,\m b \in \calV$,
and $\theta \in \Con(\m a)$, $\kappa \in \Con(\m b)$
are such that $0\prec \theta$, $0\prec \kappa$, and $\theta,\kappa$
are both abelian.  If $D(\m a,\theta)\cong D(\m b,\kappa)$, then
$\bbF_\theta \cong \bbF_\kappa$.
\end{prp}

\begin{proof}
Let $\varphi_{\m a}$ be the monolith of $D(\m a,\theta)$,
let $\varphi_{\m b}$ be the monolith of $D(\m b,\kappa)$,
and
let $\psi:D(\m a,\theta)\cong D(\m b,\kappa)$ be an isomorphism.  
Let $\Do_{\m a}$ and $\Do_{\m b}$ be the canonical transversals for 
$\varphi_{\m a}$ and $\varphi_{\m b}$ respectively.
Clearly $\psi(\varphi_{\m a})=\varphi_{\m b}$ since the monolith of
any subdirectly irreducible algebra is a characteristic congruence
(i.e., preserved by all automorphisms).
Hence $\psi(\Do_{\m a})$ is a transversal for $\varphi_{\m b}$.
By Lemma~\ref{lm:aut}, there exists an automorphism $\sigma\in
\Aut(D(\m b,\kappa))$ satisfying $\sigma(\psi(\Do_{\m a}))=
\Do_{\m b}$.  Thus $\sigma\circ\psi:(D(\m a,\theta),\varphi_{\m a},\Do_{\m a})
\cong (D(\m b,\kappa),\varphi_{\m b},\Do_{\m b})$.  Since these triples are
exactly the data used to define $\bbF_\theta$ and $\bbF_\kappa$, it follows that
the map $\lambda \mapsto (\sigma\circ\psi)\circ \lambda \circ 
(\sigma\circ\psi)^{-1}$ is an isomorphism from $\bbF_\theta$ to $\bbF_\kappa$.
\end{proof}

Note that if $\gamma,\theta\in \Con(\m a)$ with $\gamma\prec
\theta$ and $\theta/\gamma$ abelian, then the construction 
above provides a division ring $\bbF_{\theta/\gamma}$ associated
to the abelian minimal congruence $\theta/\gamma$ of $\m a/\gamma$.
Thus every abelian cover pair of congruences (of an algebra
in a variety with a weak difference term) can be assigned a division ring.
It turns out that the division rings assigned to perspective abelian
cover pairs in this way are isomorphic.

\begin{thm} \label{thm:persp'}
Suppose $\calV$ is a variety with a weak difference term, $\m a \in \calV$,
and $\gamma,\theta,\delta,\varepsilon \in \Con(\m a)$ with 
$\gamma<\theta$
and $\delta< \varepsilon$, $\theta/\gamma$ and $\varepsilon/\delta$
are abelian, and $(\gamma,\theta)
\nearrow (\delta,\varepsilon)$.
Let $\alpha = (\gamma:\theta)$.
\begin{enumerate}
\item \label{persp':it1}
The rule 
\[
(a/\gamma,b/\gamma)/\Delta_{\theta/\gamma,\alpha/\gamma} \mapsto
(a/\delta,b/\delta)/\Delta_{\varepsilon/\delta,\alpha/\delta}
\quad
\mbox{for}~(a,b)\in \theta
\]
is a well-defined isomorphism $D(\m a/\gamma,\theta/\gamma)\cong
D(\m a/\delta,\varepsilon/\delta)$.
\item \label{persp':it2}
If $\gamma\prec\theta$ and $\delta\prec\varepsilon$, then
$\bbF_{\theta/\gamma}\cong \bbF_{\varepsilon/\delta}$.
\end{enumerate}
\end{thm}

\begin{proof}
Observe that 
$(0:\theta/\gamma) = \alpha/\gamma$  
by Lemma~\ref{lm:quotient}\eqref{quotient:it1}, so we may assume
with no loss of generality that $\gamma=0$. 
Let $\barma$, $\barvarepsilon$ and $\baralpha$ denote
$\m a/\delta$, $\varepsilon/\delta$ and 
$\alpha/\delta$ respectively.
Note that
$(0:\barvarepsilon)=\baralpha$ 
by Lemma~\ref{lm:quotient}\eqref{quotient:it1} 
and Proposition~\ref{prp:ann}, so $D(\barma,\barvarepsilon)=\barma(\barvarepsilon)/\Delta_{\barvarepsilon,\baralpha}$.  

\eqref{persp':it1}
Let $\psi:\m a(\theta)\ra \barma(\barvarepsilon)$ be the homomorphism
given by
by $\psi((a,b)) = (a/\delta,b/\delta)$, let
$\nu:\barma(\barvarepsilon)\ra 
%\barma(\barvarepsilon)/\Delta_{\barvarepsilon, \baralpha}=
D(\barma,\barvarepsilon)$ be the natural projection map, and let
$\Psi=\nu\circ\psi$.
Since $\varepsilon = \delta\circ\theta\circ\delta$ by 
Proposition~\ref{prp:ann}, $\psi$ is surjective, so $\Psi$ is also surjective.
We must show that $\ker(\Psi)=\Delta_{\theta,\alpha}$.  Let $\Delta=\Delta_{\theta,\alpha}$. Because $\varepsilon=\delta\circ\theta\circ\delta$, we have 
\begin{equation} \label{persp':eq1}
M(\barvarepsilon,\baralpha) = \left\{\begin{pmatrix}
a/\delta & a'/\delta\\b/\delta & b'/\delta\end{pmatrix}:
\begin{pmatrix} a&a'\\b&b'\end{pmatrix}\in M(\theta,\alpha) \right\}.
\end{equation}
Hence $\Delta\subseteq \ker(\Psi)$.
Before proving the opposite inclusion, we establish the following.

\medskip\noindent\textsc{Claim.}
If $(a,b),(r,s) \in \theta$ and $(a,r),(b,s) \in \delta$, then
$(a,b) \stackrel{\Delta}\equiv (r,s)$.

\begin{proof}[Proof of Claim]
Let
$d$ be a weak difference term for $\calV$.
Note first that $d(a,r,r) \stackrel\theta\equiv d(b,s,r)$ and
$d(a,r,r) \stackrel{\delta}\equiv d(a,a,a) = a= d(b,b,a) \stackrel\delta\equiv
d(b,s,r)$.  Since $\theta\cap\delta=0$, this proves
\begin{equation} \label{persp':eq2}
d(a,r,r) = d(b,s,r).
\end{equation}
Next, note that 
\begin{equation} \label{persp':eq3}
d(\underline{r},r,r) = r = d(\underline{r},s,s),
\end{equation}
so since $(r,s) \in \theta$ and 
$(r,b) \in \delta\vee\theta = \varepsilon\leq \alpha$ and
$C(\alpha,\theta;0)$, we can replace the underlined occurrences of ``$r$"
in equation~\ref{persp':eq3} by ``$b$" to get
\begin{equation} \label{persp':eq4}
d(b,r,r)  = d(b,s,s).
\end{equation}
Now we build some $(\theta,\alpha)$-matrices.
Let
\[
M_1 = \begin{pmatrix}a&a\\b&b\end{pmatrix},
~~
M_2 = \begin{pmatrix}a&r\\a&r\end{pmatrix},
~~
M_3 = \begin{pmatrix}b&s\\b&s\end{pmatrix},
~~
M_4 = \begin{pmatrix}s&s\\s&s\end{pmatrix},
~~
M_5 = \begin{pmatrix}r&r\\s&s\end{pmatrix}.
\]
Clearly $M_1,\ldots,M_5 \in M(\theta,\alpha)$.
Applying the weak difference term gives
\begin{align*}
N:=d(M_1,M_2,M_2) &= \begin{pmatrix} a & d(a,r,r)\\b & d(b,r,r)\end{pmatrix} \in
M(\theta,\alpha)\\
N':=d(M_3,M_4,M_5) &= \begin{pmatrix} d(b,s,r)&r\\d(b,s,s)&s\end{pmatrix} \in
M(\theta,\alpha).
\end{align*}
The second column of $N$ and the first column of $N'$ are identical
by equations~\eqref{persp':eq2} and~\eqref{persp':eq4}.
Since $\Delta$ is the ``horizontal transitive closure" of
$M(\theta,\alpha)$, 
this proves $(a,b)\stackequiv{\Delta}(r,s)$, proving 
the Claim.
\end{proof}

Now we prove $\ker(\Psi)\subseteq \Delta$.
Assume $((a,b),(a',b')) \in \ker(\Psi)$; that is, 
\[
\mbox{$(a,b),(a',b') \in \theta$
and $(a/\delta,b/\delta) \stackrel{\Delta_{\barvarepsilon,\baralpha}}\equiv
(a'/\delta,b'/\delta)$.}
\]
By definition of $\Delta_{\barvarepsilon,\baralpha}$ and 
equation~\ref{persp':eq1}, there exist $n\geq 0$ and
\begin{equation} \label{persp':eq5}
\begin{pmatrix} r_i&a_i\\s_i&b_i\end{pmatrix} \in M(\theta,\alpha)\quad
\mbox{for $i\leq n$}
\end{equation}
such that 
$(a/\delta,b/\delta)=(r_0/\delta,s_0/\delta)$, 
$(a_i/\delta,b_i/\delta) = (r_{i+1}/\delta,s_{i+1}/\delta)$ for all $i<n$, and
$(a'/\delta,b'/\delta)=(a_n/\delta,b_n/\delta)$.
The Claim then gives
$(a,b)\stackrel{\Delta}\equiv (r_0,s_0)$, 
$(a_i,b_i)\stackrel{\Delta}\equiv (r_{i+1},s_{i+1})$ for 
$i<n$, and 
$(a_n,b_n)\stackrel{\Delta}\equiv (a',b')$. 
As equation~\ref{persp':eq5} gives $(r_i,s_i) \stackrel\Delta\equiv (a_i,
b_i)$ for $i<n$, we get $(a,b)\stackrel\Delta\equiv(a',b')$ by transitivity
as required.

\eqref{persp':it2} follows from \eqref{persp':it1} and Proposition~\ref{prp:samefield}.
\end{proof}

\section{Similarity} \label{sec:sim}

In his 1983 paper \cite{freese1983}, Freese introduced a relation 
of \emph{similarity} between subdirectly irreducible algebras
in a congruence modular variety.  If the monoliths of the algebras
are nonabelian, then the algebras are similar if and only if they are 
isomorphic.  If the monoliths are abelian, then the relation is more
interesting.  Freese's original definition in the latter case, as well as
the equivalent formulation found in \cite{freese-mckenzie}, 
relied heavily on having a difference term, and does not easily adapt to
varieties with a weak difference term.

Happily, Freese gave two other equivalent formulations of the similarity
relation in the abelian monolith case, one of which is based 
on the construction in Section~\ref{sec:update}.  Both formulations
generalize nicely to varieties with a weak difference term.
We develop this generalization in this section.

The following definition, in the context of congruence modular varieties, is found in
\cite[Definition 10.9]{freese-mckenzie}.  

\begin{df} \label{df:D(A)}
Suppose $\calV$ is a variety with a weak difference term, and
$\m a \in \calV$ is subdirectly irreducible with monolith $\mu$.  The algebra $D(\m a)$ is
defined as follows:
\begin{enumerate}
\item
If $\mu$ is nonabelian, then $D(\m a)=\m a$.
\item
If $\mu$ is abelian, then $D(\m a)$ is $D(\m a,\mu)$, the difference algebra for $\mu$.
\end{enumerate}
\end{df}

Observe that, in the context of the previous definition, $D(\m a)$ is a subdirectly irreducible
algebra in $\HS(\m a^2)$; furthermore, the monoliths of $\m a$ and $D(\m a)$ 
are either both abelian or both nonabelian.

\begin{df} \label{df:similar}
Suppose $\calV$ is a variety with a weak difference term, and
$\m a,\m b \in \calV$ are subdirectly irreducible.  We say that $\m a$ and $\m b$ are
\emph{similar}, and write $\m a\sim \m b$, if $D(\m a)\cong D(\m b)$.
\end{df}

By \cite[Theorem 5.1]{freese1983} and \cite[Theorem~10.11]{freese-mckenzie},
our definition of similarity, when
restricted to the congruence modular setting, is equivalent to the original definition given
in \cite{freese1983} and \cite{freese-mckenzie}.  It is in this sense that our definition
extends the original one.

If we let $\mu$ and $\montwo$ denote the respective monoliths of 
$\m a$ and $\m b$, then
Definition~\ref{df:similar} can be rephrased as follows: $\m a\sim \m b$ if and only if one of the following holds:
\begin{enumerate}
\item
$\mu$ and $\montwo$ are both nonabelian and $\m a\cong \m b$; or
\item
$\mu$ and $\montwo$ are both abelian and $D(\m a,\mu)\cong D(\m b,\montwo)$.
\end{enumerate}

Note also that, 
if the monoliths $\mu$ and $\montwo$ are
abelian and $\m a\sim \m b$, then 
$\bbF_\mu\cong \bbF_\montwo$ 
%and $\m a/(0:\mu)\cong \m b/(0:\montwo)$ 
by Proposition~\ref{prp:samefield}.
% and Corollary~\ref{cor:D(A)}}.

Before stating the main theorem of this section, we introduce 
a variant of a  crucial construct due to Zhuk \cite[Section 6.6]{zhuk2020}.

\begin{df} \label{df:bridge}
Suppose $\m a,\m b$
are subdirectly irreducible algebras in a common signature with monoliths
$\mu,\montwo$ respectively.  A \emph{\proper\ bridge} from $\m a$ to $\m b$
is a subuniverse $T\leq\m a\times \m a\times \m b\times \m b$ satisfying
\begin{enumerate}[label=(B\arabic*)]
\item \label{bridge:it1}
$\proj_{1,2}(T)=\mu$ and $\proj_{3,4}(T)=\montwo$.
\item \label{bridge:it2}
For all $(a_1,a_2,b_1,b_2) \in T$ we have $a_1=a_2$ if and only if $b_1=b_2$.
\item \label{bridge:it3}
For all $(a_1,a_2,b_1,b_2)\in T$ we have $(a_i,a_i,b_i,b_i) \in T$ for $i=1,2$.
\end{enumerate}
If $T$ is a \proper\ bridge from $\m a$ to $\m b$, then we define its \emph{trace}
to be the set $\tr(T) = \set{(a,b)\in A\times B}{$(a,a,b,b) \in T$}$, and
its \emph{kernel} to be the set 
\[
\ker(T) = 
\set{((a_1,b_1),(a_2,b_2))}{$(a_1,a_2,b_1,b_2) \in T$}.
\]
\end{df}

Observe that for any \proper\ bridge $T$ we have
$\tr(T) = \proj_{1,3}(T) = \proj_{2,4}(T)$ by property \ref{bridge:it3}, 
and hence $\tr(T)\leq_{sd}\m a\times \m b$ by property
\ref{bridge:it1}.

In the special case where one of the two monoliths %in the previous definition 
is nonabelian,
the existence of a \proper\ bridge is possible only in the most trivial way.

\begin{prp} \label{prp:onenonabel}
Suppose $\calV$ is a variety with a weak difference term, $\m a,\m b
\in \calV$ are subdirectly irreducible algebras with monoliths
$\mu,\montwo$ respectively, and $T$ is a \proper\ bridge
from $\m a$ to $\m b$.  
Assume that one of $\mu$ or $\montwo$ is nonabelian. Then so is the other, 
$\tr(T)$ is the graph of an isomorphism $h:\m a\cong\m b$, and
$T = \set{(a,b,h(a),h(b))}{$(a,b)\in \mu$}$.
\end{prp}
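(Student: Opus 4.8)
The plan is to transfer the whole statement to the trace. We may assume $\mu$ is nonabelian, since reordering the coordinates of $T$ by $(a_1,a_2,b_1,b_2)\mapsto(b_1,b_2,a_1,a_2)$ turns a proper bridge from $\m a$ to $\m b$ into one from $\m b$ to $\m a$, and both the hypothesis and the conclusion are symmetric under this. Put $R=\tr(T)$, which by \ref{bridge:it3} is a subdirect product $R\le_{sd}\m a\times\m b$, and let $\eta_1=\ker(\proj_1\restrict{R})$, $\eta_2=\ker(\proj_2\restrict{R})$ in $\Con(R)$, so $\eta_1\wedge\eta_2=0_R$ and $R/\eta_1\cong\m a$, $R/\eta_2\cong\m b$. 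I claim it is enough to prove $\eta_1=0_R$. Granting this, $\proj_1\restrict{R}$ is an isomorphism and $R=\graph(g)$ for the surjective homomorphism $g=\proj_2\circ(\proj_1\restrict{R})^{-1}\colon\m a\ra\m b$; by \ref{bridge:it3} every $(a_1,a_2,b_1,b_2)\in T$ has $(a_1,b_1),(a_2,b_2)\in R=\graph(g)$, so $b_i=g(a_i)$, whence $T\subseteq\set{(a_1,a_2,g(a_1),g(a_2))}{$(a_1,a_2)\in\mu$}$ and \ref{bridge:it1} forces equality. Then $\montwo=\proj_{3,4}(T)=g(\mu)$, while \ref{bridge:it2} gives $\ker(g)\wedge\mu=0_A$; since $\mu$ is the monolith of $\m a$ (hence the unique atom of $\Con(\m a)$), $\ker(g)$ is $0_A$ or contains $\mu$, and the latter is impossible, so $g$ is an isomorphism $h$ and $\montwo=h(\mu)$ is nonabelian because $\mu$ is.

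Write $K=\ker(T)$. By \ref{bridge:it3}, $K$ is a subuniverse of $R\times R$; it is reflexive on $R$ (by definition of $\tr(T)$); its images under the two coordinate projections to $\m a$ and to $\m b$ are exactly $\proj_{1,2}(T)=\mu$ and $\proj_{3,4}(T)=\montwo$; and \ref{bridge:it2} gives $K\cap\eta_1=K\cap\eta_2=0_R$. Using these facts I would first show $\montwo$ is nonabelian. Suppose not. As $\mu$ is nonabelian, $C(\mu,\mu;0)$ fails, so there are a term $F(x_1,\dots,x_m,y_1,\dots,y_n)$ and pairs $(a_i,b_i),(c_j,d_j)\in\mu$ with $F^{\m a}(\bar a,\bar c)=F^{\m a}(\bar a,\bar d)$ but $F^{\m a}(\bar b,\bar c)\ne F^{\m a}(\bar b,\bar d)$. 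For each pair choose a lift in $K$: $(r_i,r_i')\in K$ with $\proj_1 r_i=a_i$, $\proj_1 r_i'=b_i$, and $(s_j,s_j')\in K$ with $\proj_1 s_j=c_j$, $\proj_1 s_j'=d_j$; set $\beta_i=\proj_2 r_i,\beta_i'=\proj_2 r_i',\gamma_j=\proj_2 s_j,\gamma_j'=\proj_2 s_j'$, so $(\beta_i,\beta_i'),(\gamma_j,\gamma_j')\in\montwo$. Applying $F$ coordinatewise inside the subalgebra $K\le R\times R$ and using that $K$ is reflexive,
\[
\bigl(F^R(\bar r,\bar s),\ F^R(\bar r,\bar s')\bigr)=F^K\bigl((r_1,r_1),\dots,(r_m,r_m),(s_1,s_1'),\dots,(s_n,s_n')\bigr)\in K .
\]
Reading coordinates in $R\le\m a\times\m b$, the two sides are $(F^{\m a}(\bar a,\bar c),F^{\m b}(\bar\beta,\bar\gamma))$ and $(F^{\m a}(\bar a,\bar d),F^{\m b}(\bar\beta,\bar\gamma'))$; since $F^{\m a}(\bar a,\bar c)=F^{\m a}(\bar a,\bar d)$, this pair lies in $\eta_1$, hence in $K\cap\eta_1=0_R$, so $F^{\m b}(\bar\beta,\bar\gamma)=F^{\m b}(\bar\beta,\bar\gamma')$. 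If $\montwo$ were abelian, $C(\montwo,\montwo;0)$ would upgrade this, via the $\montwo$-pairs $(\beta_i,\beta_i')$ and $(\gamma_j,\gamma_j')$, to $F^{\m b}(\bar\beta',\bar\gamma)=F^{\m b}(\bar\beta',\bar\gamma')$. But then
\[
\bigl(F^R(\bar r',\bar s),\ F^R(\bar r',\bar s')\bigr)=F^K\bigl((r_1',r_1'),\dots,(r_m',r_m'),(s_1,s_1'),\dots,(s_n,s_n')\bigr)\in K
\]
has equal $\m b$-coordinates, so it lies in $K\cap\eta_2=0_R$, forcing $F^{\m a}(\bar b,\bar c)=F^{\m a}(\bar b,\bar d)$, a contradiction. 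Hence $\montwo$ is nonabelian.

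Now suppose, toward a contradiction, that $\eta_1\ne0_R$. As $R/\eta_1\cong\m a$ is subdirectly irreducible, $\eta_1$ is completely meet-irreducible in $\Con(R)$ with unique upper cover $\mu^\circ:=(\proj_1\restrict{R})^{-1}(\mu)$, and $\mu^\circ/\eta_1$ is (a copy of) the nonabelian congruence $\mu$; since $\mu$ is the monolith, $(0:\mu)$ in $\Con(\m a)$ is $0$ (it is $0$ or $\ge\mu$, and $\ge\mu$ would make $\mu$ abelian), so applying Lemma~\ref{lm:quotient}\eqref{quotient:it2} to $\proj_1\restrict{R}$ gives $(\eta_1:\mu^\circ)=\eta_1$ in $\Con(R)$. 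Let $\theta$ be the congruence of $R$ generated by $K$; since the $\m a$- and $\m b$-images of $K$ are $\mu$ and $\montwo$, we have $\theta\le\mu^\circ\wedge\montwo^\circ$ (where $\montwo^\circ:=(\proj_2\restrict{R})^{-1}(\montwo)$), while $\theta\vee\eta_1=\mu^\circ$ because the $\m a$-image of $K$ is \emph{all} of $\mu$. If $\eta_2=0_R$, then $\mu^\circ\wedge\montwo^\circ\le\montwo^\circ=0_R$ would give $\mu^\circ=\theta\vee\eta_1=\eta_1$, i.e.\ $\mu=0$, impossible; so $\eta_2\ne0_R$, and the argument just given, applied with $\m b,\montwo,\eta_2$ in place of $\m a,\mu,\eta_1$ and using that $\montwo$ is nonabelian, yields $(\eta_2:\montwo^\circ)=\eta_2$. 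Applying Proposition~\ref{prp:ann}\eqref{ann:it2} to the transposes $(\theta\wedge\eta_1,\theta)\nearrow(\eta_1,\mu^\circ)$ and $(\theta\wedge\eta_2,\theta)\nearrow(\eta_2,\montwo^\circ)$ (legitimate since $\eta_1\prec\mu^\circ$ and $\eta_2\prec\montwo^\circ$) gives $(\theta\wedge\eta_1:\theta)=\eta_1$ and $(\theta\wedge\eta_2:\theta)=\eta_2$; since $C(\varphi,\theta;\delta)$ is preserved under meets in $\delta$, the annihilator $(\delta:\theta)$ is meet-multiplicative in $\delta$, so $(0_R:\theta)=(\theta\wedge\eta_1\wedge\eta_2:\theta)=\eta_1\wedge\eta_2=0_R$.

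The hard part will be to derive a contradiction from the configuration just reached --- $(0_R:\theta)=0_R$, with $\theta$ generated by the reflexive relation $K$ that satisfies $K\cap\eta_1=K\cap\eta_2=0_R$ and projects onto the nonabelian monoliths $\mu$ and $\montwo$ --- and thereby conclude $\eta_1=0_R$. I would try this by a transport argument in the style of the second paragraph: fixing $(p,q)\in\mu$ with $p\ne q$ so that $\mu=\cg^{\m a}(p,q)$, expressing a witness of the non-abelianness of $\mu$ in terms of $p$ and $q$, lifting it along $K$, and using $K\cap\eta_1=0_R$ together with $(\eta_1:\mu^\circ)=\eta_1$ to collapse a nontrivial pair of $R$ into the diagonal --- hence $p=q$. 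The delicate point is that, $\mu$ being nonabelian, there is no single-polynomial description of $\cg^{\m a}(p,q)$, so the transport must be organized along a Mal'cev chain; alternatively one could try to exclude the lattice configuration above directly inside $\Con(R)$. Once $\eta_1=0_R$ is established, the reduction of the first paragraph completes the proof.
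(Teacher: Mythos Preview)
Your reduction in the first paragraph to showing $\eta_1=0_R$ is sound, and the transport argument in your second paragraph proving that $\montwo$ must also be nonabelian is correct. But the proof of $\eta_1=0_R$ is, as you acknowledge, incomplete. Moreover, one step in your third paragraph is wrong: preservation of $C(\varphi,\theta;\delta)$ under meets in $\delta$ gives only $(\delta_1\wedge\delta_2:\theta)\ge(\delta_1:\theta)\wedge(\delta_2:\theta)$, not equality, so from $(\theta\wedge\eta_i:\theta)=\eta_i$ you cannot deduce $(0_R:\theta)=\eta_1\wedge\eta_2$. Even if that conclusion held, you have not indicated how it yields a contradiction.

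The paper avoids the congruence-lattice detour entirely. Once $\montwo$ is known to be nonabelian (the paper simply assumes this WLOG; you have proved it), set
\[
S=\set{(b,c)\in B^2}{$\exists a\in A$ with $(a,b),(a,c)\in\tr(T)$},
\]
a tolerance of $\m b$; note that $S=0_B\iff\eta_1=0_R$. The set
\[
Y=\left\{\begin{pmatrix}b&c\\b'&c'\end{pmatrix}:\exists\,(a,a')\in\mu,\ (a,a',b,b'),(a,a',c,c')\in T\right\}
\]
is a subuniverse of $\m b^{2\times 2}$ containing $X(\montwo,S)$: for generators $\left(\begin{smallmatrix}c&c\\d&d\end{smallmatrix}\right)$ with $(c,d)\in\montwo$, use \ref{bridge:it1}; for generators $\left(\begin{smallmatrix}b&c\\b&c\end{smallmatrix}\right)$ with $(b,c)\in S$, take $a'=a$ from the definition of $S$. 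Hence $M(\montwo,S)\subseteq Y$. By \ref{bridge:it2}, every matrix in $Y$ has one column in $0_B$ iff the other does, which is exactly $C(S,\montwo;0_B)$; then $C(\cg(S),\montwo;0_B)$ by \cite[Theorem~2.19(2)]{kearnes-kiss}. If $S\ne 0_B$ then $\cg(S)\ge\montwo$, making $\montwo$ abelian --- contradiction. So $\eta_1=0_R$, and your first paragraph finishes the argument.
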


\begin{proof}
Assume with no loss of generality that 
$\montwo$ is nonabelian.  We will first show that
$\tr(T)$ is the graph of a surjective homomorphism $h:\m a\ra \m b$.  
Let
\[
S = \set{(b,c) \in B^2}{$\exists a \in A$ with $(a,b),(a,c) \in \tr(T)$}.
\]
$S$ is a tolerance of $\m b$.  Let
\[
Y = \left\{\begin{pmatrix}b&c\\b'&c'\end{pmatrix} \in B^{2\times 2}~:~
\mbox{$\exists\, (a,a') \in \mu$ with $(a,a',b,b'),(a,a',c,c') \in T$}\right\}.
\]
One can easily show, using \ref{bridge:it1} and the definitions of $S$ and $Y$, that
$X(\montwo,S)\subseteq Y$ 
(see Definition~\ref{df:M}).  Since $Y$ is a subuniverse of $\m b^{2\times 2}$, we 
get $M(\montwo,S)\subseteq Y$.  Then from \ref{bridge:it2} and 
Definition~\ref{df:cent}\eqref{dfcent:it2} we can deduce $C(S,\montwo;0_B)$.
Since $S$ is a tolerance, we can improve this to $C(\cg(S),\montwo;0_B)$ by
\cite[Theorem 2.19(2)]{kearnes-kiss}.  Now assume that $S\ne 0_B$; then $\cg(S)\geq \montwo$
since $\montwo$ is the monolith of $\m b$; but then $\montwo$ would be
abelian, contrary to our assumption.  This proves $S=0_B$ and hence $\tr(T)$ is the graph
of a surjective homomorphism $h:\m a\ra \m b$.

Assume that $h$ is not injective.  Then $\mu\subseteq \ker(h)$ as $\mu$ is the
monolith of $\m a$.
Choose $(a,b) \in \mu$ with $a\ne b$.  By \ref{bridge:it1}, there exists $(c,d) \in
\montwo$ with $(a,b,c,d) \in T$.  Then $c\ne d$ by \ref{bridge:it2}.  By \ref{bridge:it3},
we have $(a,c),(b,d) \in \tr(T)$, or equivalently, $c=h(a)$ and $d=h(b)$.  
But then $(a,b) \in \mu
\subseteq \ker(h)$ implies $c=d$, contradiction.  Thus $h$ is an isomorphism.

Finally, for every $(a,b) \in \mu$ there is at least one pair $(c,d) \in \montwo$ with
$(a,b,c,d) \in T$ by \ref{bridge:it1}. On the other hand, the only pair 
$(c,d)$ that can work is $(c,d)=(h(a),h(b))$ by \ref{bridge:it3} and the
definition of $h$.  Hence $T=\set{(a,b,h(a),h(b))}{$(a,b) \in \mu$}$.
\end{proof}

\begin{lm} \label{lm:bridge}
Suppose $\calV$ is a variety with a weak difference term, $\m a,\m b
\in \calV$ are subdirectly directly algebras with monoliths
$\mu,\montwo$ respectively, and $T$ is a \proper\ bridge
from $\m a$ to $\m b$.   Let $\m c=\tr(T)\leq_{sd}\m a\times \m b$
and $\tau = \ker(T)$.  Then $\tau \in \Con(\m c)$.  Hence
\begin{enumerate}[label=(B\arabic*)]
\setcounter{enumi}{3}
\item \label{bridge:it4}
For all $(a_1,a_2,b_1,b_2) \in T$ we have $(a_2,a_1,b_2,b_1)\in T$.
\item \label{bridge:it5}
If $(a_1,a_2,b_1,b_2) \in T$ and $(a_2,a_3,b_2,b_3) \in T$, then
$(a_1,a_3,b_1,b_3) \in T$.
\end{enumerate}
\end{lm}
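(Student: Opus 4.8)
The plan is to show directly that $\tau$ is a congruence of $\m c$; then \ref{bridge:it4} and \ref{bridge:it5} are nothing but the statements that $\tau$ is symmetric and transitive, read off through the definition of $\ker(T)$. Before doing anything I would record three routine facts. First, the coordinate shuffle $(a_1,a_2,b_1,b_2)\mapsto((a_1,b_1),(a_2,b_2))$ is an isomorphism $\m a\times\m a\times\m b\times\m b\cong(\m a\times\m b)^2$ carrying the subuniverse $T$ onto $\tau$, so $\tau$ is a subuniverse of $(\m a\times\m b)^2$. Second, if $((a_1,b_1),(a_2,b_2))\in\tau$, so $(a_1,a_2,b_1,b_2)\in T$, then \ref{bridge:it3} gives $(a_1,a_1,b_1,b_1),(a_2,a_2,b_2,b_2)\in T$, i.e. $(a_1,b_1),(a_2,b_2)\in\tr(T)$; hence $\tau\subseteq\tr(T)^2$, and combined with the first fact $\tau$ is a subuniverse of $\m c^2$. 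Third, for $(a,b)\in\tr(T)$ we have $(a,a,b,b)\in T$ by the definition of the trace, i.e. $((a,b),(a,b))\in\tau$, so $\tau$ is reflexive on $\tr(T)$.

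The argument then divides according to whether the monoliths are abelian. If $\mu$ — equivalently, by Proposition~\ref{prp:onenonabel}, $\montwo$ — is nonabelian, then Proposition~\ref{prp:onenonabel} produces an isomorphism $h\colon\m a\cong\m b$ with $\tr(T)=\graph(h)$ and $T=\set{(a,b,h(a),h(b))}{$(a,b)\in\mu$}$. In this case $a\mapsto(a,h(a))$ is an isomorphism $\m a\cong\m c$, and under it $\mu$ corresponds precisely to $\tau$; hence $\tau\in\Con(\m c)$.

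In the remaining case both $\mu$ and $\montwo$ are abelian, and here the idea is to apply Lemma~\ref{lm:maltsev} to the reflexive subuniverse $\rho=\tau$ of $\m c^2$, using as the ternary term a weak difference term $d$ of $\calV$ (which is a weak difference term of $\m c$, since $\m c\in\calV$). The only thing to check is that $d\bigl((a_1,b_1),(a_1,b_1),(a_2,b_2)\bigr)=(a_2,b_2)$ and $d\bigl((a_1,b_1),(a_2,b_2),(a_2,b_2)\bigr)=(a_1,b_1)$ for every $((a_1,b_1),(a_2,b_2))\in\tau$. For such a pair, \ref{bridge:it1} gives $(a_1,a_2)\in\mu$ and $(b_1,b_2)\in\montwo$; since $\mu$ is abelian the restriction of $d$ to the $\mu$-class of $a_1$ is a Maltsev operation, so $d(a_1,a_1,a_2)=a_2$ and $d(a_1,a_2,a_2)=a_1$, and likewise $d(b_1,b_1,b_2)=b_2$ and $d(b_1,b_2,b_2)=b_1$ because $\montwo$ is abelian. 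Reading $d$ coordinatewise in $\m a\times\m b$ gives exactly the two required identities, so Lemma~\ref{lm:maltsev} yields $\tau\in\Con(\m c)$.

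The only genuine subtlety is that a weak difference term is guaranteed to act as a Maltsev operation only on the blocks of an abelian congruence; this is why the clean Lemma~\ref{lm:maltsev} argument covers only the abelian-monolith case, and the non-abelian case must be handled separately by falling back on Proposition~\ref{prp:onenonabel}, where the bridge degenerates to (the graph of) an isomorphism and the claim is essentially trivial. Everything else in the proof is bookkeeping with the definitions of $T$, $\tr(T)$, and $\ker(T)$.
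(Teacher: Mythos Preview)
Your proof is correct and follows essentially the same approach as the paper: split into the nonabelian case (handled via Proposition~\ref{prp:onenonabel}) and the abelian case (handled via Lemma~\ref{lm:maltsev} using that $d$ is Maltsev on blocks of abelian congruences). The paper packages the abelian case slightly differently, introducing the auxiliary congruence $\theta=\set{((a_1,b_1),(a_2,b_2))\in C^2}{$(a_1,a_2)\in\mu$ and $(b_1,b_2)\in\montwo$}$ and noting $\tau\subseteq\theta$, but this amounts to exactly the check you perform directly.
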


\begin{proof}
Note that $\tau$ is a reflexive subuniverse of $\m c^2$ by \ref{bridge:it3}, 
and that
\ref{bridge:it4} and \ref{bridge:it5} simply express the symmetry and
transitivity of $\tau$.  Hence all we need to prove is that $\tau$ is symmetric
and transitive.

If $\mu$ or $\kappa$ is nonabelian, then $C$ is the graph of an isomorphism
$h:\m a\cong \m b$ and $\tau = \set{((a,h(a)),(b,h(b))}{$(a,b)\in \mu$}$
by Proposition~\ref{prp:onenonabel}.  Clearly $\tau$ is symmetric and transitive
 in this case.

If $\mu$ and $\kappa$ are both abelian,
let $d(x,y,z)$ be a weak difference term for $\calV$.  Let 
\[
\theta = 
\set{((a_1,b_1),(a_2,b_2)) \in C^2}{$(a_1,a_2) \in \mu$ and $(b_1,b_2) \in
\montwo$}.
\]
Clearly $\theta \in \Con(\m c)$.
Because $\mu$ and
$\montwo$ are abelian, the restriction of $d$ to any $\theta$-class is a Maltsev
operation.  
We have that $\tau$ is a reflexive subuniverse of 
$\m c^2$ and $\tau\subseteq \theta$ by \ref{bridge:it1}.
Thus $\tau$ is symmetric and transitive by Lemma~\ref{lm:maltsev}.
\end{proof}

\begin{lm} \label{lm:bridge2}
Suppose $\calV$ is a variety with a weak difference term, $\m a,\m b
\in \calV$ are subdirectly directly with monoliths $\mu,\montwo$ respectively, 
and $T$ is a \proper\ bridge
from $\m a$ to $\m b$.   Let $\m c=\tr(T)\leq_{sd}\m a\times \m b$
and $\tau = \ker(T)$.  
Also let
$\delta_1,\delta_2 \in \Con(\m c)$ be the kernels of the two projection
homomorphisms $\proj_1:\m c\ra \m a$ and $\proj_2:\m c\ra \m b$, and let
$\coverdelta_i$ be the unique upper cover of $\delta_i$ in $\Con(\m c)$
for $i=1,2$.
\begin{enumerate}
\item \label{bridge2:it1}
$(\delta_1,\coverdelta_1)\searrow (0,\tau) \nearrow (\delta_2,\coverdelta_2)$.
\item \label{bridge2:it2}
For all $(a,r),(b,s) \in \tr(T)$ we have $(a,b) \in (0:\mu) \iff (r,s) \in
(0:\montwo)$.
\item \label{bridge2:it3}
$\m a/(0:\mu) \cong \m b/(0:\montwo)$.
\end{enumerate}
\end{lm}

\begin{proof}
\eqref{bridge2:it1}
Clearly $\tau \leq \coverdelta_1 \wedge \coverdelta_2$ by \ref{bridge:it1}.
Choose $(a_1,a_2) \in \mu$ with $a_1\ne a_2$.  
Again by \ref{bridge:it1},
there exist $b_1,b_2 \in B$ with 
$(a_1,a_2,b_1,b_2) \in T$ and hence $(a_1,b_1)\stackequiv{\tau} (a_2,b_2)$,
which proves $\tau\nleq \delta_1$.
A similar argument shows $\tau\nleq\delta_2$.  This proves
$\tau \vee \delta_i=\coverdelta_i$ for $i=1,2$.
Finally, suppose $((a_1,b_1),(a_2,b_2)) \in \tau\cap \delta_1$.  Then
$a_1=a_2$, so $b_1=b_2$ by \ref{bridge:it2}, so $(a_1,b_1)=(a_2,b_2)$.
This shows $\tau\wedge\delta_1=0$,
and a similar proof shows $\tau\wedge\delta_2=0$.
This 
proves $(\delta_1,\coverdelta_1)\searrow (0,\tau) \nearrow 
(\delta_2,\coverdelta_2)$ as required.

\eqref{bridge2:it2}
By \eqref{bridge2:it1} and 
Proposition~\ref{prp:ann}\eqref{ann:it2}
we get $(\delta_1:\coverdelta_1)=(0:\tau) = (\delta_2:\coverdelta_2)$.
On the other hand, letting $\proj_1:\m c\ra \m a$ be the first projection
homomorphism, we have 
$(\delta_1:\coverdelta_1) = (\proj_1^{-1}(0):\proj_1^{-1}(\mu))=
\proj_1^{-1}((0:\mu))$ by 
Lemma~\ref{lm:quotient}\eqref{quotient:it2}, and similarly $(\delta_2:\coverdelta_2)=\proj_2^{-1}((0:\montwo))$.
Together these facts prove $\proj_1^{-1}((0:\mu))=\proj_2^{-1}((0:\montwo))$, which
is equivalent to the claim in \eqref{bridge2:it2}.

\eqref{bridge2:it3}
Let $\alpha=(0:\mu)$ and $\beta = (0:\montwo)$.
The set $\barC:= \set{(a/\alpha,r/\beta)}{$(a,r) \in C$}$ clearly satisfies
$\barC\leq_{sd}\m a/\alpha\times \m b/\beta$, and is the graph of a bijection 
by \eqref{bridge2:it2}.
\end{proof}

Here is the main theorem of this section.  
The equivalence of items~\eqref{persp:it1}--\eqref{persp:it3}
was proved for congruence modular varieties
by Freese \cite[Theorems 2.1 and 5.1]{freese1983}; see also \cite[Theorems 10.8 and 10.11]{freese-mckenzie}.

\begin{thm} \label{thm:persp}
Suppose $\calV$ is a variety with a weak difference term, and
$\m a,\m b\in \calV$ are subdirectly irreducible.
The following are equivalent:
\begin{enumerate}
\item \label{persp:it1}
$\m a\sim \m b$.
\item \label{persp:it2}
There exist an algebra $\m c \in \calV$,
surjective homomorphisms $f_1:\m c\ra \m a$ and $f_2:\m c\ra \m b$,
 and congruences 
$\psi,\tau \in \Con(\m C)$ with $\psi<\tau$, such that, letting $\delta_i = \ker(f_i)$ and 
letting $\coverdelta_i$
denote the unique upper cover of $\delta_i$ in $\Con(\m c)$ for $i=1,2$, we have
$(\delta_1,\coverdelta_1) \searrow (\psi,\tau) \nearrow (\delta_2,\coverdelta_2)$.

\item \label{persp:it3}
There exist an algebra $\m c \leq_{sd}\m a\times \m b$ and a congruence
$\tau \in \Con(\m c)$ which, with 
the projection
homomorphisms $f_1=\proj_1:\m c\ra \m a$ and $f_2=\proj_2:\m c\ra \m b$ and $\psi:=0_C$,
 satisfy the conclusion of item \eqref{persp:it2}.
\item \label{persp:it4}
There exists a \proper\ bridge from $\m a$ to $\m b$.
\end{enumerate}
\end{thm}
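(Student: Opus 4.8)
The plan is to run the cycle $\eqref{persp:it1}\Rightarrow\eqref{persp:it4}\Rightarrow\eqref{persp:it3}\Rightarrow\eqref{persp:it2}\Rightarrow\eqref{persp:it4}$ together with the extra arrow $\eqref{persp:it4}\Rightarrow\eqref{persp:it1}$. Since $\eqref{persp:it3}$ is literally a special case of $\eqref{persp:it2}$ (take $\m c$ subdirect, $f_i=\proj_i$, $\psi=0$), this delivers all the equivalences: from any statement one reaches $\eqref{persp:it4}$, and from $\eqref{persp:it4}$ one reaches every statement. Let $\mu,\montwo$ denote the monoliths of $\m a,\m b$. In each arrow the case where one (hence, by Proposition~\ref{prp:memoir3}\eqref{mem:3.27} applied along the perspectivities, both) of $\mu,\montwo$ is nonabelian will reduce to the situation $\m a\cong \m b$, where $D(\m a)=\m a\cong\m b=D(\m b)$ and the ``graph'' set $\set{(a,b,h(a),h(b))}{$(a,b)\in\mu$}$ is a proper bridge for any isomorphism $h$; so I will concentrate on the abelian-monolith case.

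For $\eqref{persp:it4}\Rightarrow\eqref{persp:it3}$: take $\m c=\tr(T)\leq_{sd}\m a\times\m b$, $\tau=\ker(T)$, $\psi=0$, and invoke Lemma~\ref{lm:bridge2} for the double perspectivity; $\tau\ne 0$ because choosing $(a_1,a_2)\in\mu$ with $a_1\ne a_2$ and, via \ref{bridge:it1}, some $(a_1,a_2,b_1,b_2)\in T$ yields $(a_1,b_1)\stackequiv{\tau}(a_2,b_2)$ with $(a_1,b_1)\ne(a_2,b_2)$. For $\eqref{persp:it4}\Rightarrow\eqref{persp:it1}$ with $\mu,\montwo$ abelian, set $\Delta_{\m a}=\Delta_{\mu,(0:\mu)}$ and $\Delta_{\m b}=\Delta_{\montwo,(0:\montwo)}$, so $D(\m a)=\m a(\mu)/\Delta_{\m a}$ and $D(\m b)=\m b(\montwo)/\Delta_{\m b}$. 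By \ref{bridge:it1} the bridge $T$ is a subuniverse of $\m a(\mu)\times\m b(\montwo)$; let $\Xi$ be its image in $D(\m a)\times D(\m b)$ under the two quotient maps. Lemma~\ref{lm:bridge3}\eqref{bridge3:it4} says precisely that $(a,b)\stackequiv{\Delta_{\m a}}(a',b')\iff (r,s)\stackequiv{\Delta_{\m b}}(r',s')$ whenever $(a,b,r,s),(a',b',r',s')\in T$; hence $\Xi$ is the graph of a well-defined injective function, while \ref{bridge:it1} makes it total and surjective. A subuniverse of a product that is the graph of a bijection is the graph of an isomorphism, so $D(\m a)\cong D(\m b)$, i.e.\ $\m a\sim\m b$.

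For $\eqref{persp:it1}\Rightarrow\eqref{persp:it4}$ with $\mu,\montwo$ abelian, fix an isomorphism $g_0:D(\m a)\cong D(\m b)$. By Corollary~\ref{cor:D(A)} each of $D(\m a),D(\m b)$ is subdirectly irreducible with abelian monolith and carries a surjection $h_{\m a}:\m a(\mu)\ra D(\m a)$ (resp.\ $h_{\m b}$) and a distinguished subuniverse transversal $\Do$ for its monolith with $h_{\m a}^{-1}(\Do_{\m a})=0_A$ and $h_{\m a}(a,b)$ lying in the monolith-class determined by $a/(0:\mu)$ for $(a,b)\in\mu$ (likewise for $\m b$). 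Since $g_0(\Do_{\m a})$ and $\Do_{\m b}$ are both subuniverse transversals for the monolith of $D(\m b)$, Lemma~\ref{lm:aut} supplies an automorphism $\sigma$ of $D(\m b)$ with $\sigma(g_0(\Do_{\m a}))=\Do_{\m b}$; put $g=\sigma\circ g_0$, an isomorphism carrying $\Do_{\m a}$ onto $\Do_{\m b}$ and the monolith of $D(\m a)$ onto that of $D(\m b)$. I would then take $T$ to be the fibered product
\[
T=\set{(a_1,a_2,b_1,b_2)}{$(a_1,a_2)\in\mu,\ (b_1,b_2)\in\montwo,\ g(h_{\m a}(a_1,a_2))=h_{\m b}(b_1,b_2)$},
\]
a subuniverse of $\m a\times\m a\times\m b\times\m b$. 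Property \ref{bridge:it1} follows from surjectivity of $h_{\m a},h_{\m b}$; \ref{bridge:it2} from $a_1=a_2\iff h_{\m a}(a_1,a_2)\in\Do_{\m a}\iff g(h_{\m a}(a_1,a_2))\in\Do_{\m b}\iff b_1=b_2$; and \ref{bridge:it3} from the fact that $h_{\m a}(a_i,a_i)$ is the unique element of $\Do_{\m a}$ in the monolith-class of $h_{\m a}(a_1,a_2)$ (using $(a_1,a_2)\in\mu\leq(0:\mu)$), so applying the monolith- and $\Do$-preserving map $g$ and comparing with the analogous statement for $h_{\m b}(b_i,b_i)$ forces $g(h_{\m a}(a_i,a_i))=h_{\m b}(b_i,b_i)$. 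I expect this arrow—more precisely, the need to correct $g_0$ by an automorphism so that the distinguished transversals $\Do$ correspond—to be the main obstacle; without this correction \ref{bridge:it2} and \ref{bridge:it3} can both fail.

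Finally, for $\eqref{persp:it2}\Rightarrow\eqref{persp:it4}$, keep $f_i,\delta_i,\coverdelta_i,\psi,\tau$ as given, so $\m a\cong\m c/\delta_1$ has monolith $\coverdelta_1/\delta_1$ and $\m b\cong\m c/\delta_2$ has monolith $\coverdelta_2/\delta_2$. If these monoliths are nonabelian, then $(0:\coverdelta_i/\delta_i)=0$ (a nonzero centralizer would contain the monolith, forcing it abelian), so by Lemma~\ref{lm:quotient}\eqref{quotient:it2} and Proposition~\ref{prp:ann}\eqref{ann:it2} we get $\delta_i=(\delta_i:\coverdelta_i)=(\psi:\tau)$ for $i=1,2$, whence $\delta_1=\delta_2$, $\m a\cong\m b$, and we use the graph bridge. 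If the monoliths are abelian, then $C(\tau,\tau;\delta_i)$ holds (since $\tau/\psi$ is abelian and $\tau\wedge\delta_i=\tau\wedge\psi$), so $\coverdelta_i=\tau\vee\delta_i=\delta_i\circ\tau\circ\delta_i$ by Proposition~\ref{prp:memoir1}\eqref{mem:6.7}, and therefore $f_i(\tau)=\coverdelta_i/\delta_i$ is exactly the monolith of $\m a$ (resp.\ $\m b$). A direct check then shows that
\[
T=\set{(f_1(x),f_1(y),f_2(x),f_2(y))}{$(x,y)\in\tau$}
\]
is a proper bridge: \ref{bridge:it1} is the identity just obtained, \ref{bridge:it2} holds because $f_1(x)=f_1(y)\iff (x,y)\in\psi\iff f_2(x)=f_2(y)$ for $(x,y)\in\tau$, and \ref{bridge:it3} is witnessed by the diagonal pairs $(x,x)$ and $(y,y)$. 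This completes the cycle.
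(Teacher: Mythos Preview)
Your proof is correct and follows essentially the same route as the paper's: the paper also uses Lemma~\ref{lm:bridge2} for \eqref{persp:it4}$\Rightarrow$\eqref{persp:it3}, Lemma~\ref{lm:bridge3}\eqref{bridge3:it4} for \eqref{persp:it4}$\Rightarrow$\eqref{persp:it1} in the abelian case, the automorphism from Lemma~\ref{lm:aut} to align the transversals $\Do$ for \eqref{persp:it1}$\Rightarrow$\eqref{persp:it4}, and the same bridge $T=\set{(f_1(x),f_1(y),f_2(x),f_2(y))}{$(x,y)\in\tau$}$ for \eqref{persp:it2}$\Rightarrow$\eqref{persp:it4}. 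The one small difference is in the nonabelian case of \eqref{persp:it2}: the paper argues directly that $C(\delta_1\vee\delta_2,\tau;\psi)$ forces $\delta_1=\delta_2$ (else $\delta_1\vee\delta_2\geq\tau$ makes $\tau/\psi$ abelian), whereas you obtain $\delta_1=\delta_2$ via $(\delta_i:\coverdelta_i)=\delta_i$ and Proposition~\ref{prp:ann}\eqref{ann:it2}; both arguments are short and equivalent in spirit.
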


\begin{proof}
Throughout this proof, let $\mu,\montwo$ denote the monoliths of $\m a,\m b$ respectively.
The implication \eqref{persp:it3}\Implies\eqref{persp:it2} is clear,
and \eqref{persp:it4}\Implies\eqref{persp:it3} holds by 
Lemma~\ref{lm:bridge2}\eqref{bridge2:it1}.
 We will prove
\eqref{persp:it2}\Implies\eqref{persp:it1}\Implies\eqref{persp:it4}
first in the case when one of $\mu$ or $\montwo$ is nonabelian, and then
in the case when both $\mu$ and $\montwo$ are abelian.

\medskip\noindent\textsc{Case 1}: $\mu$ or $\montwo$ is nonabelian.

\medskip
% (1) => (4)
\eqref{persp:it1}\Implies\eqref{persp:it4}.  
Since $\m a\sim\m b$, it follows that both $\mu$ and $\montwo$ are nonabelian
and there exists
an isomorphism $h:\m a\cong \m b$.  Then the set $T=\set{(a,b,h(a),h(b))}{$(a,b)
\in \mu$}$ is a \proper\ bridge from $\m a$ to $\m b$.

\medskip
% (2) => (1)
\eqref{persp:it2}\Implies\eqref{persp:it1}.  
Assume there exist $\m c \in \calV$,
surjective homomorphisms $f_1:\m c\ra \m a$ and $f_2:\m c\ra \m b$,
 and congruences 
$\psi,\tau \in \Con(\m C)$ with $\psi<\tau$, such that, letting $\delta_i = \ker(f_i)$ and 
letting $\coverdelta_i$
denote the unique upper cover of $\delta_i$ for $i=1,2$, we have
$(\delta_1,\coverdelta_1) \searrow (\psi,\tau) \nearrow (\delta_2,\coverdelta_2)$.
Because we are in Case 1, at least one of $\coverdelta_1/\delta_1$ or $\coverdelta_2/\delta_2$
is nonabelian.  By Proposition~\ref{prp:memoir3}\eqref{mem:3.27}, 
both are nonabelian, as is $\tau/\psi$.  

For each $i=1,2$ we have $\delta_i\wedge \tau=\psi$ and hence $C(\delta_i,\tau;\psi)$;
 see e.g.\ \cite[Theorem 2.19(8)]{kearnes-kiss}; thus $C(\delta_1\vee \delta_2,\tau;\psi)$
by semidistributivity of the centralizer relation in the first variable; see e.g.\ 
\cite[Theorem 2.19(5)]{kearnes-kiss}.  Suppose $\delta_1\ne \delta_2$; then without loss
of generality we can assume $\delta_1\vee\delta_2>\delta_2$.  Hence $\delta_1\vee\delta_2\geq
\coverdelta_2$ (as $\delta_2$ is completely meet-irreducible) and thus $\delta_1\vee\delta_2
\geq \tau$.  Then $C(\delta_1\vee\delta_2,\tau;\psi)$ implies $C(\tau,\tau;\psi)$ by 
monotonicity, which contradicts the fact that $\tau/\psi$ is nonabelian.  This proves
$\delta_1=\delta_2$ and hence $\m a \cong \m c/\delta_1=\m c/\delta_2\cong \m b$.  Hence
$\m a\sim \m b$.

\medskip
\noindent\textsc{Case 2}: $\mu$ and $\montwo$ are abelian.

Thus $D(\m a)=D(\m a,\mu)$ and $D(\m b)=D(\m b,\montwo)$.
For the remainder of this proof,
let $\alpha = (0:\mu)$, $\Delta_{\m a}=\Delta_{\mu,\alpha}$,
and $\Dmon_{\m a}=\baralpha_{\m a}/\Delta_{\m a}$.  Also
let $\Do_{\m a}\leq D(\m a)$ as given in
the proof of Corollary~\ref{cor:D(A)} for $(\m a,\mu)$.
Similarly define $\beta:=(0:\montwo)$, $\Delta_{\m b}$, $\Dmon_{\m b}$,
and $\Do_{\m b}$ for $(\m b,\montwo)$.

\medskip
%(2) => (1)
\eqref{persp:it2}\Implies\eqref{persp:it1}
We have $D(\m a) \cong D(\m c/\delta_1,\delta_1^+/\delta_1)$ and
$D(\m b)\cong D(\m c/\delta_2,\delta_2^+/\delta_2)$ since $\m a\cong
\m c/\delta_1$ and $\m b\cong \m c/\delta_2$.  
We can replace $\tau$ with any $\tau' \in \Con(\m c)$ satisfying
$\psi\prec\tau'\leq \tau$ and the hypotheses of \eqref{persp:it2} still hold;
thus we may assume $\psi\prec\tau$.
We have that $\tau/\psi$ is abelian, since $(\psi,\tau)\nearrow (\delta_1,\delta_1^+)$ and $\delta_1^+/\delta_1$ is abelian.  Then by Theorem~\ref{thm:persp'}
we get $D(\m c/\delta_1,\delta_1^+/\delta_1) \cong D(\m c/\psi,\tau/\psi) \cong
D(\m c/\delta_2,\delta_2^+/\delta_2)$.  Hence $D(\m a)\cong D(\m b)$,
so $\m a\sim \m b$.

\medskip
% (1) => (4)
\eqref{persp:it1}\Implies\eqref{persp:it4}.  
Assume $D(\m a)\cong D(\m b)$.
As in the proof of Proposition~\ref{prp:samefield}, there exists an
isomorphism $\lambda:D(\m a)\cong D(\m b)$ 
satisfying $\lambda(\varphi_{\m a})=\varphi_{\m b}$ and
$\lambda(\Do_{\m a})=\Do_{\m b}$.  
Define
\begin{align*}
T = \{(a_1,a_2,b_1,b_2) \in A\times A\times B\times B &:
~\mbox{$a_1\stackequiv{\mu}a_2$ and 
$b_1\stackequiv{\montwo}b_2$ and}\\
&\rule{.2in}{0in}
\mbox{$\lambda((a_1,a_2)/\Delta_{\m a}) = (b_1,b_2)/\Delta_{\m b}$}\}.
\end{align*}

Because $\lambda$ is an isomorphism, we get $T\leq \m a\times \m a\times
\m b\times \m b$ with $\proj_{1,2}(T)=\mu$ and 
$\proj_{3,4}(T)=\montwo$, proving~\ref{bridge:it1}.
Let $(a_1,a_2,b_1,b_2) \in T$ and assume $a_1=a_2$.  
Then $(a_1,a_2)/\Delta_{\m a} \in \Do_{\m a}$,  As $\lambda(\Do_{\m a})=
\Do_{\m b}$, we get $(b_1,b_2)/\Delta_{\m b}=\lambda((a_1,a_2)/\Delta_{\m a})
\in \Do_{\m b}$, so $b_1=b_2$ (see Theorem~\ref{thm:D(A)}\eqref{D(A):it1}).  A similar
argument shows that if $b_1=b_2$, then $a_1=a_2$.
This proves \ref{bridge:it2}.

Again suppose $(a_1,a_2,b_1,b_2) \in T$ and let 
$r=\lambda((a_1,a_1)/\Delta_{\m a})$ and $s=(b_1,b_1)/\Delta_{\m b}$.  
Observe that 
$(a_1,a_1)/\Delta_{\m a} \stackequiv{\Dmon_{\m a}}
(a_1,a_2)/\Delta_{\m a}$ and
$(b_1,b_1)/\Delta_{\m b} \stackequiv{\Dmon_{\m b}}
(b_1,b_2)/\Delta_{\m b}$.  
Then
\begin{align*}
r=\lambda((a_1,a_1)/\Delta_{\m a}) &\stackequiv{\Dmon_{\m b}}
\lambda((a_1,a_2)/\Delta_{\m a}) &\mbox{as $\lambda(\varphi_{\m a})=\varphi_{\m b}$}\\
&= (b_1,b_2)/\Delta_{\m b} &\mbox{as $(a_1,a_2,b_1,b_2)
\in T$}\\
&\stackequiv{\Dmon_{\m b}} (b_1,b_1)/\Delta_{\m b}=s,
\end{align*}
i.e., $(r,s) \in \Dmon_{\m b}$.
Also recall that $(a_1,a_1)/\Delta_{\m a} \in \Do_{\m a}$ and
$s=(b_1,b_1)/\Delta_{\m b} \in \Do_{\m b}$, and 
since $\lambda(\Do_{\m a})=\Do_{\m b}$
we also get $r=\lambda((a_1,a_1)/\Delta_{\m a})\in \Do_{\m b}$.  So
$r,s \in \Do_{\m b}$ and $(r,s)\in \Dmon_{\m b}$.  But $\Do_{\m b}$ is
a transversal for $\Dmon_{\m b}$.  These facts imply $r=s$,  which implies
$(a_1,a_1,b_1,b_1) \in T$.  A similar argument gives $(a_2,a_2,b_2,b_2)\in T$.
This proves \ref{bridge:it3}.  Thus $T$ is a \proper\ bridge from $\m a$ to $\m b$.
\end{proof}

Item~\eqref{simprop:it1} in the next result
extends the first claim of \cite[Theorem~10.10]{freese-mckenzie}
from congruence modular varieties to varieties with a weak difference term.
(The remainder of \cite[Theorem~10.10]{freese-mckenzie} was extended by Corollary~\ref{cor:D(A)}.)
Item~\eqref{simprop:it2} extends a fact established, in the congruence modular setting, within
the proof of \cite[Theorem~10.11]{freese-mckenzie}.

\begin{cor} \label{cor:simprop}
Suppose $\calV$ is a variety with a weak difference term, and
$\m a\in \calV$ is subdirectly irreducible with abelian monolith $\mu$.
\begin{enumerate}
\item \label{simprop:it1}
$\m a\sim D(\m a)$.
\item \label{simprop:it1.5}
Setting $\alpha = (0:\mu)$ and $\Delta = \Delta_{\mu,\alpha}$, the set 
\[
\TD a := \set{(a,b,(a,e)/\Delta,(b,e)/\Delta)}{$a\stackrel{\mu}{\equiv}
b \stackrel{\mu}{\equiv}e$}
\]
is a \proper\ bridge from $\m a$ to $D(\m a)$.
\item \label{simprop:it2}
If $(0:\mu)=\mu$ and 
there exists a subuniverse $A_0\leq \m a$ which is a transversal for
$\mu$, then $\m a\cong D(\m a)$.
\end{enumerate}
Hence each similarity class in $\calV$ consisting of subdirectly irreducible algebras
with abelian monoliths contains a unique (up to isomorphism)
 subdirectly irreducible 
algebra $\m s$ with abelian monolith $\Dmon$ satisfying (i)
$(0:\Dmon)=\Dmon$, and (ii) $\m s$ has a subuniverse which is a transversal
for $\Dmon$.
\end{cor}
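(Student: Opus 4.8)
The plan is to prove \eqref{simprop:it1.5} first, deduce \eqref{simprop:it1} from it via Theorem~\ref{thm:persp}, prove \eqref{simprop:it2} directly, and then combine \eqref{simprop:it1} and \eqref{simprop:it2} to get the concluding uniqueness statement. Throughout, put $\alpha=(0:\mu)$, $\Delta=\Delta_{\mu,\alpha}$, and $\Dmon=\baralpha/\Delta$, so that $D(\m a)=\m a(\mu)/\Delta$ is subdirectly irreducible with abelian monolith $\Dmon$ by Corollary~\ref{cor:D(A)}.

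For \eqref{simprop:it1.5} I would first verify that $\TD a$ is a subuniverse of $\m a\times\m a\times D(\m a)\times D(\m a)$: applying a term $t$ coordinatewise to tuples $(a_i,b_i,(a_i,e_i)/\Delta,(b_i,e_i)/\Delta)$ produces $(t(\tup a),t(\tup b),(t(\tup a),t(\tup e))/\Delta,(t(\tup b),t(\tup e))/\Delta)$, because $t$ acts coordinatewise in $\m a(\mu)$ and $t(\tup a)\stackequiv{\mu}t(\tup b)\stackequiv{\mu}t(\tup e)$ since $\mu$ is a congruence. Property~\ref{bridge:it3} is immediate (take $b=a$, resp.\ $a=b$, keeping $e$), and~\ref{bridge:it2} follows from the injectivity of $\embed e$ in Lemma~\ref{lm:inject}\eqref{inject:it1}. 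For~\ref{bridge:it1}, $\proj_{1,2}(\TD a)=\mu$ trivially (take $e=a$), while $\proj_{3,4}(\TD a)\subseteq\Dmon$ because $a\stackequiv{\mu}b$ forces $(a,b)\in\alpha$ (as $\mu\le\alpha$), hence $((a,e),(b,e))\in\baralpha$. The reverse inclusion is the one substantive point: $\proj_{3,4}(\TD a)$ is a reflexive subuniverse of $D(\m a)^2$ contained in the abelian congruence $\Dmon$, hence is a congruence by Lemma~\ref{lm:maltsev} (using a weak difference term of $D(\m a)$, which restricts to a Maltsev operation on each $\Dmon$-class), and it is nonzero (it contains $\bigl((a,b)/\Delta,(b,b)/\Delta\bigr)$ for any $(a,b)\in\mu$ with $a\ne b$, the two being distinct by Theorem~\ref{thm:D(A)}\eqref{D(A):it1}); since $\Dmon$ is the monolith, $\proj_{3,4}(\TD a)=\Dmon$. (Alternatively one argues pointwise that a pair of $\Dmon$-related elements lies in a common range $\ran(\embed e)$, using Lemma~\ref{lm:arrow}\eqref{arrow:it3}--\eqref{arrow:it4} and Lemma~\ref{lm:samerange}\eqref{samerange:it2}, which exhibits the pair in the required form.) Thus $\TD a$ is a proper bridge. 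Then \eqref{simprop:it1} follows immediately: $D(\m a)$ lies in $\calV$ and is subdirectly irreducible (as noted after Definition~\ref{df:D(A)}), so the proper bridge just constructed gives $\m a\sim D(\m a)$ via Theorem~\ref{thm:persp}\eqref{persp:it4}$\Rightarrow$\eqref{persp:it1}.

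For \eqref{simprop:it2}, assume $\alpha=(0:\mu)=\mu$ and fix a subuniverse $A_0\le\m a$ which is a transversal for $\mu$; let $\pi:\m a\to A_0$ be the retraction with kernel $\mu$ (a homomorphism, as $A_0\le\m a$). Define $\Phi:\m a\to D(\m a)$ by $\Phi(x)=(x,\pi(x))/\Delta$; this is the composite of the homomorphism $x\mapsto(x,\pi(x))$ of $\m a$ into $\m a(\mu)$ with the quotient onto $D(\m a)$. For injectivity: if $\Phi(x)=\Phi(y)$ then $(x,\pi(x))\stackequiv{\Delta}(y,\pi(y))$, and since $\Delta\le\baralpha$ this gives $(x,y)\in\alpha=\mu$ (this is the one place $\alpha=\mu$ is used); hence $\pi(x)=\pi(y)=:e$ and $\embed e(x)=\embed e(y)$, so $x=y$ by Lemma~\ref{lm:inject}\eqref{inject:it1}. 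For surjectivity: given $(r,s)/\Delta$ with $(r,s)\in\mu$, set $e=\pi(r)=\pi(s)$ and $x=d(r,s,e)$; then $x\stackequiv{\mu}e$, so $\pi(x)=e$, and $(x,\pi(x))=(d(r,s,e),e)\stackequiv{\Delta}(r,s)$ by Lemma~\ref{lm:samerange}\eqref{samerange:it1}, so $\Phi(x)=(r,s)/\Delta$. Hence $\Phi$ is an isomorphism $\m a\cong D(\m a)$.

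Finally, for the concluding statement: given a similarity class of subdirectly irreducibles with abelian monoliths, choose any $\m a$ in it and put $\m s=D(\m a)$; then $\m s$ lies in the class by \eqref{simprop:it1}, and by Corollary~\ref{cor:D(A)} it has abelian monolith $\Dmon$ with $(0:\Dmon)=\Dmon$ and the subuniverse $\Do$ as a transversal for $\Dmon$, which gives existence. For uniqueness, if $\m s_1$ and $\m s_2$ both lie in the class and both satisfy (i) and (ii), then $\m s_k\cong D(\m s_k)$ by \eqref{simprop:it2}, while $\m s_1\sim\m s_2$ means $D(\m s_1)\cong D(\m s_2)$ by the definition of similarity, so $\m s_1\cong\m s_2$. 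The only real obstacle is the equality $\proj_{3,4}(\TD a)=\Dmon$ in \eqref{simprop:it1.5}: a single tuple of $\TD a$ confines its last two entries to one common range $\ran(\embed e)$, so this equality is not pointwise-evident and needs either the Maltsev-closure argument or the directedness supplied by Lemma~\ref{lm:arrow} and Corollary~\ref{cor:directed}; the rest is bookkeeping with constructions and lemmas already in hand.
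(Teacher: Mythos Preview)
Your proof is correct. The main departure from the paper is organizational and in the key step of \eqref{simprop:it1.5}.

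The paper proves \eqref{simprop:it1} first, directly via the perspectivity characterization: it takes $\m c=\m a(\mu)$ with $\delta_1=\eta_1$, $\delta_2=\Delta$, $\tau=\eta_2$ and invokes Theorem~\ref{thm:D(A)} to verify the hypotheses of Theorem~\ref{thm:persp}\eqref{persp:it2}. It then proves \eqref{simprop:it1.5} separately, handling the inclusion $\Dmon\subseteq\proj_{3,4}(\TD a)$ pointwise: given $((a_1,a_2)/\Delta,(b_1,b_2)/\Delta)\in\Dmon$, it uses Lemma~\ref{lm:arrow} to find a common $e$ with $\ran(\embed{a_2})\cup\ran(\embed{b_2})\subseteq\ran(\embed e)$, which exhibits the pair in the required form. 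You instead prove \eqref{simprop:it1.5} first and deduce \eqref{simprop:it1} from it via Theorem~\ref{thm:persp}\eqref{persp:it4}$\Rightarrow$\eqref{persp:it1}, and your argument for $\proj_{3,4}(\TD a)=\Dmon$ is cleaner: observing that $\proj_{3,4}(\TD a)$ is a nonzero reflexive subuniverse of $D(\m a)^2$ contained in the abelian monolith $\Dmon$, hence a congruence by Lemma~\ref{lm:maltsev}, hence all of $\Dmon$. This avoids the directedness machinery entirely.

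For \eqref{simprop:it2} the two proofs are essentially identical; the only difference is that for injectivity of $\Phi$ the paper uses $\Delta_{\mu,\mu}=M(\mu,\mu)$ (Lemma~\ref{lm:faces}\eqref{faces:it1.5}) and the term condition directly, while you route through the injectivity of $\embed e$ from Lemma~\ref{lm:inject}. Both are fine. The concluding uniqueness argument is the same.
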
  

\begin{proof}
\eqref{simprop:it1}
Let $\eta_1,\eta_2$ be the kernels of the two projection maps
$\proj_1,\proj_2:\m a(\mu)\ra \m a$.
By Theorem~\ref{thm:D(A)}, if we set $\m b=D(\m a)$,
$\m c=\m a(\mu)$, $\delta_1 = \eta_1$,
$\delta_2 = \Delta$, $\coverdelta_1 = \barmu$, $\coverdelta_2 = \baralpha$,
$\psi=0$, and $\tau=\eta_2$, then the conditions of 
Theorem~\ref{thm:persp}\eqref{persp:it2} are met, showing $\m a\sim D(\m a)$.

\eqref{simprop:it1.5}
This can be deduced from the above proof of item~\eqref{simprop:it1}, following the proof
of Theorem~\ref{thm:persp} \eqref{persp:it2}\Implies\eqref{persp:it4}
in the abelian monolith case.
Instead, we give a direct proof.  
Let $\Dmon = \baralpha/\Delta$.
Clearly $\proj_{1,2}(\TD a)
= \mu$, $\proj_{3,4}(\TD a)\subseteq \Dmon$,
 \ref{bridge:it3} holds, and the \parenImplies\ direction of \ref{bridge:it2} holds.
To prove the \parenImpliedby\ direction of \ref{bridge:it2}, assume that
$a,b,e$ belong to a common $\mu$ class and $(a,e)\stackrel{\Delta}{\equiv} (b,e)$.
Using the notation of Definition~\ref{df:lambda-e}, we can express this as
$\embed e(a)=\embed e(b)$.  But $\embed e$ is injective by Lemma~\ref{lm:inject},
so $a=b$, completing the proof of \ref{bridge:it2}.

It remains to prove $\Dmon \subseteq \proj_{3,4}(\TD a)$.  
Let $((a_1,a_2)/\Delta,(b_1,b_2)/\Delta) \in \Dmon$
be given; thus $(a_1,a_2),(b_1,b_2) \in \mu$ and $a_1,a_2,b_1,b_2$ belong to a common
$\alpha$-class.  By Lemma~\ref{lm:arrow}, there exists $e \in a_1/\alpha$
with $\ran(\embed{a_2}) \cup \ran(\embed{b_2}) \subseteq \ran(\embed e)$.
As $(a_1,a_2)/\Delta \in \ran(\embed {a_2})$ and $(b_1,b_2)/\Delta \in 
\ran(\embed{b_2})$,
this implies the existence of $a,b \in e/\mu$ with $\embed e(a)=(a_1,a_2)/\Delta$ and
$\embed e(b)=(b_1,b_2)/\Delta$, which means
\[
((a_1,a_2)/\Delta,(b_1,b_2)/\Delta) = ((a,e)/\Delta,(b,e)/\Delta) \in \TD a.
\]

\eqref{simprop:it2}
Since $(0:\mu)=\mu$, we have $D(\m a)=\m a(\mu)/\Delta$ where $\Delta=\Delta_{\mu,\mu}$.
Since $\mu$ is abelian, we get $\Delta = \left\{
((a,b),(a',b')):{\scriptsize{\begin{pmatrix}a&a'\\b&b'\end{pmatrix}}} \in M(\mu,\mu)\right\}$
by Lemma~\ref{lm:faces}\eqref{faces:it1.5}.
Let $\pi:\m a\ra \m a_0$ be the homomorphism with 
$\pi\restrict{A_0}=\mathrm{id}_{A_0}$ and $\ker(\pi)=\mu$.  Then define
$\lambda:\m a\ra D(\m a)$ by $\lambda(x) = (x,\pi(x))/\Delta$.  
Suppose $(a,b) \in \ker(\lambda)$.  
Then $((a,\pi(a),(b,\pi(b)))\in \Delta$, so 
$M:={\scriptsize{\begin{pmatrix} a&b\\\pi(a)&\pi(b)
\end{pmatrix} \in M(\mu,\mu)}}$.  
Also $(a,b)\in \mu$, so $\pi(a)=\pi(b)$.  Thus the bottom entries of $M$ are equal.
Since $\mu$ is abelian, it follows that the top entries of $M$ are also equal,
i.e., $a=b$.  Thus $\lambda$ is injective.

To prove that $\lambda$ is surjective, let $(a,b) \in \mu$ be given and define
$e=\pi(a)=\pi(b)$ and $c = d(a,b,e)$ where $d(x,y,z)$ is a weak difference term for $\calV$.
Then $(a,b)/\Delta = (c,e)/\Delta=\lambda(c)$ as required.
\end{proof}

\section{Conclusion and questions} \label{sec:conclu}

In this paper we have extended Freese's analysis of abelian minimal
congruences, and his similarity relation between subdirectly irreducible
algebras, from the congruence modular setting to the setting of varieties
with a weak difference term.  We also established a connection 
between similarity and certain constructs which 
we have called ``\proper\ bridges."  In the companion paper \cite{bridges}, we 
will compare \proper\ bridges to Zhuk's original bridges (which are superficially
weaker) and explain some related elements of Zhuk's work in terms of centrality
and similarity.  In a third paper \cite{critical}, we 
will unite the analyses of 
``rectangular critical relations" by Zhuk
\cite[Lemma 8.21]{zhuk2020} (in the idempotent locally finite Taylor case)
and by
Kearnes and Szendrei \cite{kearnes-szendrei-parallel} (in the congruence
modular case), showing in particular the role of similarity, associated finite fields,
and linear equations. 

In the congruence modular setting, Freese and McKenzie \cite{freese-mckenzie} established
some nontrivial results concerning the similarity relation, 
such as bounding the number of similarity classes of
subdirectly irreducible algebras in finitely generated varieties.
One can ask whether any of these properties persist in varieties with a weak difference 
term, or with a difference term.  For example:

\begin{prb}
If $\m a$ is a finite algebra and $\HSP(\m a)$ has
a weak difference term, does it follow that there are only finitely many similarity classes
of subdirectly irreducible algebras in $\HSP(\m a)$ with abelian monolith?
\end{prb}

Finite subdirectly irreducible algebras with abelian monoliths in 
\emph{idempotent}
 locally finite
Taylor varieties are of particular interest.  We have seen one result
(Lemma~\ref{lm:idem}) showing how the assumption of idempotency 
can restrict the possible sizes of monolith classes.
In what other ways does idempotency affect the sizes of monolith classes?  

\begin{prb}
Suppose $\m a$ is a finite idempotent subdirectly irreducible 
algebra in a Taylor variety with abelian monolith $\mu$,
$\alpha = (0:\mu)$, and $\alcl$ is an $\alpha$-class.  If $q$ is the maximal size of a
$\mu$-class in $\alcl$, does it follow that every $\mu$-class in $\alcl$ has size $q$ or 1?
\end{prb}

When $\m a$ is a finite algebra in a variety with a weak difference term
and $\theta$ is an abelian minimal congruence,
tame congruence theory tells us that $\typ_{\m a}(0,\theta)=\tup 2$ and
therefore the minimal algebras $\m a|_N$ associated to $\langle 0,
\theta\rangle$-traces are weakly isomorphic to a one-dimensional vector 
space over a finite field $\bbF$ determined by $\theta$.

\begin{prb}
Is the finite field $\bbF$ mentioned above isomorphic to $\bbF_\theta$?  If not,
how are $\bbF$ and $\bbF_\theta$ related?
\end{prb}

\phantomsection
\hypertarget{A1}{}

\section*{Appendix 1: One division ring}

Let $\m a$ be an algebra in a variety with a weak difference term,
let $\theta$ be an abelian minimal congruence, and let
$\bbF_\theta$ be the division ring
constructed in Section~\ref{sec:divring}.
In this appendix we will show that 
$\bbF_\theta$ is isomorphic to another division ring $\bbD$ associated to $\theta$,
which Freese \cite[Section 4, page 150]{freese1983} 
described in the congruence modular setting and whose construction easily adapts
to algebras having a weak difference term.
We also show that when $\m a$ is finite, $\bbF_\theta$ is isomorphic
to the finite
field $\bbF$ arising in the proof sketch of Theorem~\ref{thm:finite-vecspace}.

First we describe our adaptation of Freese's construction.
Assume only that $\m a$ is an algebra having a weak difference term $d$,
and $\theta$ is an abelian minimal congruence.
Let the $\theta$-classes be indexed
by a set $I$ via the notation $A/\theta = \set{\thcl_i}{$i \in I$}$.
Let $T$ be a transversal for $\theta$ and for each $i \in I$ let
$e_i$ be the unique element of $T\cap \thcl_i$.  For $i,j \in I$ define
\[
R_{ij} = \set{f\restrict{\thcl_j}}{$f \in \Pol_1(\m a)$
and $f(e_j)=e_i$}.
\]
(Freese denoted this set by $\Hom(\theta,e_j,e_i)$.)
Also define $O_{ij} \in R_{ij}$ to be the constant map $\thcl_j\ra \{e_i\}$.
Let $\calR$ be the set of $I\times I$-matrices $(r_{ij})_{i,j \in I}$
where $r_{ij} \in R_{ij}$ for all $i,j \in I$, and for each $j \in I$,
$r_{ij} = O_{ij}$ for all but finitely many $i \in I$.
Let $V$ denote the direct sum of the abelian groups 
$\Grp a(\theta,e_i)$, $i \in I$,
realized as the subgroup of $\prod_{i \in I}\Grp a(\theta,e_i)$ 
consisting
of those $(a_i)_{i \in I} \in \prod_{i \in I}\thcl_i$ for which 
$a_j=e_j$ for all but finitely many $j \in I$.  
Observe that each $f \in R_{ij}$ is a group homomorphism from
$\Grp a(\theta,e_j)$ to $\Grp a(\theta,e_i)$, by Proposition~\ref{prp:affine};
that $R_{ij}$ is closed under the pointwise addition operation in
$\Grp a(\theta,e_i)$; and that $f \in R_{ij}$, $g \in R_{jk}$ imply
$f\circ g \in R_{ik}$.

Thus $\calR$ acts in the usual ``matrix" way on $V$: 
given $M=(r_{ij})_{i,j \in I} \in \calR$,
define $\sfL_M:V\ra V$ so that $\sfL_M((a_i)_{i \in I}) = (b_i)_{i \in I}$
where
\[
b_i = \sum_{j \in I,\,a_j\ne e_j} r_{ij}(a_j)\quad \mbox{with the sum computed in
$\Grp a(\theta,e_i)$}.
\]
One can show that each $\sfL_M \in \End(V)$, the map $M\mapsto \sfL_M$ is injective,
and its range $\sfL_{\calR}:=\set{\sfL_M}{$M \in \calR$}$ is a unital subring of $\bEnd(V)$.
Via the identification of $\calR$ with $\sfL_\calR$, 
$\calR$ inherits the operations of a unital ring, and we obtain the faithful
left $\calR$-module 
${}_\calR V$.
The minimality of $\theta$ implies that ${}_\calR V$ is a simple module.
Freese \cite[p.\ 150]{freese1983} explained this in the congruence modular
case; here is a proof that does not require congruence modularity.
Let $\zerobar = (e_i)_{i \in I} \in V$.
The claim of simplicity reduces to proving the following: 
if $\tup a = (a_i)_{i \in I} \in V$ with $\tup a\ne \zerobar$, 
and $\tup b = (b_i)_{i \in I} \in V$ is such that there exists $k \in I$ with
$b_i=e_i$ for all $i \in I\setminus\{k\}$, then $\tup b \in \calR\tup a$.
To prove it,
pick $\ell \in I$ with $a_\ell\ne e_\ell$.  By minimality of $\theta$ we have
$(b_k,e_k) \in \cg(a_\ell,e_\ell)$.  Thus by Corollary~\ref{cor:maltsev}, there
exists $f \in \Pol_1(\m a)$ with $f(a_\ell)=b_k$ and $f(e_\ell)=e_k$.
Let $M = (r_{ij})_{i,j \in I} \in \calR$ be the matrix given by 
$r_{k\ell}=f\restrict{\thcl_\ell}$ and $r_{ij}=O_{ij}$ for $(i,j)\ne (k,\ell)$.
Then $M\tup a = \sfL_M(\tup a)=\tup b$ as required.
It follows by Schur's Lemma
for modules that the endomorphism ring 
$\bEnd({}_{\calR}V)$ of the module ${}_\calR V$ is a division ring $\bbD$.
This is the division ring Freese associated to $\theta$ and the transversal
$T$ (in the congruence modular case).

\begin{appendixoneprp} \label{prp:new=F}
Suppose $\m a$ is an algebra in a variety with a weak difference term and
$\theta$ is an abelian minimal congruence of $\m a$.  Let $\bbD$ be the
division ring associated to $\theta$ and some transversal $T$
in the previous two paragraphs.  Then $\bbD \cong \bbF_\theta$.
\end{appendixoneprp}

\stepcounter{appendixonecounter}

\begin{proof}[Proof sketch]
Let $\alpha=(0:\theta)$ and $\Delta=\Delta_{\theta,\alpha}$.  Recall that
the universe of
$\bbF_\theta$ is the set of all endomorphisms $\lambda$ of the difference
algebra $D(\m a,\theta) = \m a(\theta)/\Delta$ which 
preserve $\varphi:=\baralpha/\Delta$ and fix the elements of 
$\Do = \set{0_\alcl}{$ \alcl \in A/\alpha$}$ pointwise.
Let $V$ be the abelian group defined in the paragraphs preceding
Proposition~\ref{prp:new=F}.  
Define the ring $\m p = \prod_{i \in I}\bEnd(\Grp a(\theta,e_i))$ and let
$\Psi:\m p\ra \bEnd(V)$ be the obvious (coordinatewise) ring embedding.
Given $\lambda \in \bbF_\theta$, define $\Phi(\lambda)=(\lambda_i)_{i \in I}$
where for each $i \in I$, $\lambda_i:\thcl_i\ra \thcl_i$ is the map given
as follows: given $a \in \thcl_i$, $\lambda_i(a)$ is the unique $b \in \thcl_i$
such that $\lambda((a,e_i)/\Delta) = (b,e_i)/\Delta$
(see Corollary~\ref{cor:ran-sub}).  
It is tedious but straightforward to check that $\Phi$ 
is an injective ring homomorphism from $\bbF_\theta$ into $\m p$.

It remains only to show that the range of 
$\Psi\circ\Phi$ is precisely $\End({}_{\calR}V)$, 
for then $\Psi\circ\Phi$ will be an isomorphism from $\bbF_\theta$ to $\bbD$.
We can establish this by proving the following:
\begin{enumerate}
\item
$\End({}_\calR V)\subseteq \Psi(P)$.
\item
For all $(\lambda_i)_{i \in I} \in P$, the following are equivalent:
\begin{enumerate}
\item \label{newF:it1}
$\Psi((\lambda_i)_{i \in I}) \in \End({}_\calR V)$.
\item \label{newF:it2}
For all $i,j \in I$ and all $r \in R_{ij}$,
$\lambda_i\circ r = r\circ\lambda_j$.
\item \label{newF:it3}
$(\lambda_i)_{i\in I} \in \Phi(\bbF_\theta)$.
\end{enumerate}
\end{enumerate}
The first item can be deduced by 
applying the law $h\circ \sfL_M = \sfL_M\circ h$ for $h \in \End({}_\calR V)$
and $M \in \calR$, in particular for $M=(m_{ij})_{i,j\in I}$ 
where for some $k$ we have $m_{kk}=\mathrm{id}_{\thcl_k}$
and $m_{ij}=O_{ij}$ for all $(i,j)\ne (k,k)$.
A similar argument establishes the equivalence of 
\eqref{newF:it1} and \eqref{newF:it2}.

It remains to prove \eqref{newF:it2} $\Leftrightarrow$ \eqref{newF:it3}.
First we record a helpful observation.

\medskip\noindent\textsc{Claim 1.}
For all $i,j \in I$ and $r \in R_{ij}$, there exists a term
$t(x,y_1,\ldots,y_n)$ and $\alpha$-classes $\alcl_1,\ldots,\alcl_n$ such
that for all $a \in \thcl_j$,
\[
(r(a),e_i)/\Delta = t^{D(\m a,\theta)}((a,e_j)/\Delta,0_{\alcl_1},\ldots,
0_{\alcl_n}).
\]

To prove Claim 1, choose 
$f \in \Pol_1(\m a)$ with $f(e_j)=
e_i$ and $r=f\restrict{\thcl_j}$, and pick a term $t(x,y_1,\ldots,y_n)$
and $\tup u \in A^n$ so that $f(x)=t(x,\tup u)$.  
Then for each $\ell\in [n]$ let $\alcl_\ell = u_\ell/\alpha$.
Then for any $a \in \thcl_j$,
\begin{align*}
(r(a),e_i)/\Delta &= (t(a,\tup u),t(e_j,\tup u))/\Delta\\
&= t^{D(\m a,\theta)}((a,e_j)/\Delta,(u_1,u_1)/\Delta,\ldots,
(u_n,u_n)/\Delta)\\
&= t^{D(\m a,\theta)}((a,e_j)/\Delta,0_{\alcl_1},\ldots,0_{\alcl_n}),
\end{align*}
which proves Claim 1.

\medskip
Now
assume that $(\lambda_i)_{i \in I} = \Phi(\lambda)$ with $\lambda \in 
\bbF_\theta$.
Fix $i,j \in I$ and $r \in R_{ij}$ and let $a \in \thcl_j$.
We must show $\lambda_i(r(a)) = r(\lambda_j(a))$.  
Choose $t(x,\tup y)$ and $\alcl_1,\ldots,\alcl_n\in A/\alpha$ for $i,j,r$
as in Claim 1.
By definition,
$\lambda_i(r(a))$ is the unique $b \in \thcl_i$ such that
$\lambda((r(a),e_i)/\Delta) = (b,e_i)/\Delta$.  Well,
\begin{align*}
\lambda((r(a),e_i)/\Delta) &= \lambda(
t^{D(\m a,\theta)}((a,e_j)/\Delta,0_{\alcl_1},\ldots,0_{\alcl_n}))
\\
&= t^{D(\m a,\theta)}(\lambda((a,e_j)/\Delta),0_{\alcl_1},\ldots,
0_{\alcl_n}) &\mbox{as $\lambda \in \bbF_\theta$}\\
&= t^{D(\m a,\theta)}((\lambda_j(a),e_j)/\Delta,0_{\alcl_1},\ldots,0_{\alcl_n})\\
&= (r(\lambda_j(a)),e_i)/\Delta,
\end{align*}
which implies $\lambda_i(r(a))=r(\lambda_j(a))$, proving \eqref{newF:it3}
$\Rightarrow$ \eqref{newF:it2}.

For the opposite implication, assume that $(\lambda_i)_{i \in I}$ satisfies
the condition of \eqref{newF:it2}.
We first claim that for all $i,j \in I$, all $a \in \thcl_i$, and all
$b \in \thcl_j$,
\begin{equation} \label{eq:prp:newF}
(a,e_i)\stackrel\Delta\equiv (b,e_j) ~~\mbox{implies}~~
(\lambda_i(a),e_i) \stackrel\Delta\equiv (\lambda_j(b),e_j).
\end{equation}
Indeed, assume $(a,e_i)\stackrel\Delta\equiv (b,e_j)$.
Choose a $\theta$-class $\thcl_k$ so that $\thcl_i,\thcl_j\ra \thcl_k$
(see Lemma~\ref{lm:arrow}).  Let $f_i,f_j \in \Pol_1(\m a)$ be witnesses to
$\thcl_i\ra\thcl_k$ and $\thcl_j\ra\thcl_k$ respectively.  
By Lemma~\ref{lm:arrow}, we can assume $f_i,f_j$ are chosen so that 
$f_\ell(e_\ell)=e_k$ for $\ell=i,j$.  
Then
$(f_i(a),e_k) \stackrel\Delta\equiv (a,e_i)\stackrel\Delta\equiv
(b,e_j) \stackrel\Delta\equiv (f_j(b),e_k)$ by Lemma~\ref{lm:arrow}\eqref{arrow:it1.5} and the assumption, which implies
$f_i(a)=f_j(b)$ by Lemma~\ref{lm:inject}\eqref{inject:it1}.  
Let $r=f_i\restrict{\theta_i}$ and $s=f_j\restrict{\theta_j}$ and note that
$r \in R_{ki}$ and $s \in R_{kj}$.
Condition \eqref{newF:it2} then gives
$\lambda_k\circ r = r\circ \lambda_i$ and $\lambda_k\circ s=s\circ \lambda_j$.
Then
\begin{align*}
(\lambda_i(a),e_i) \stackrel\Delta\equiv (r(\lambda_i(a)),e_k) 
&= (\lambda_k(r(a)),e_k)\\  
&=(\lambda_k(s(b)),e_k) =
(s(\lambda_j(b)),e_k) \stackrel\Delta\equiv
(\lambda_j(b),e_j)
\end{align*}
which proves \eqref{eq:prp:newF}.

Now let $\lambda:D(\m a,\theta)\ra D(\m a,\theta)$ be the unique map satisfying
\[
\lambda((x,e_i)/\Delta) = (\lambda_i(x),e_i)/\Delta
\]
for all $i \in I$ and all
$x \in \thcl_i$.  ($\lambda$ is well-defined by \eqref{eq:prp:newF}.)
We will show that $\lambda \in \bbF_\theta$, which will complete the
proof of Proposition~\ref{prp:new=F}.  Clearly 
$\lambda(\varphi)\subseteq \varphi$ and $\lambda(0_\alcl)=0_\alcl$ for all 
$\alcl\in A/\alpha$.  It remains to show that 
$\lambda \in \End(D(\m a,\theta))$.
The next Claim is an immediate consequence of Proposition~\ref{prp:affine}.

\medskip\noindent\textsc{Claim 2.}
For all $n\geq 1$, all $f \in \Pol_n(\m a)$, and all 
$\theta$-classes $\thcl_{i_1},\ldots, \thcl_{i_n}$, if $\thcl_\ell$ is
the $\theta$-class containing $f(e_{i_1},\ldots,e_{i_n})$, then 
there exist $r_t \in R_{\ell i_t}$ for $t\in [n]$ such that the following holds:
for all
$(a_1,\ldots,a_n) \in \thcl_{i_1}\times \cdots \times \thcl_{i_n}$,
\[
f(\tup a) = \sum_{t=1}^n r_t(a_t) + f(\tup e) \quad
\mbox{computed in $\Grp a(\theta,e_\ell)$.}
\]

Now let $F$ be an $n$-ary basic operation.  Choose $n$ elements of 
$D(\m a,\theta)$, which may be written $(a_1,e_{i_1})/\Delta,\ldots,
(a_n,e_{i_n})/\Delta$
for some $\theta$-classes $\theta_{i_t}$ and 
$a_t \in \thcl_{i_t}$.
We must show
\begin{equation}\label{eq:whattoshow}
\lambda(F((a_1,e_{i_1})/\Delta,\ldots,(a_n,e_{i_n})/\Delta))
= F(\lambda((a_1,e_{i_1})/\Delta),\ldots,\lambda((a_n,e_{i_n})/\Delta)).
\end{equation}
Let $\thcl_\ell$ be the $\theta$-class containing $F(\tup a)$ and choose $r_t
\in R_{\ell i_t}$ for $t \in [n]$ as in Claim 2 for $F$ and $C_{i_1},\ldots,
C_{i_n}$.
We have $(F(\tup a),F(\tup e)) \stackrel\Delta\equiv (d(F(\tup a),F(\tup e),
e_\ell),e_\ell)$ by Lemma~\ref{lm:samerange}\eqref{samerange:it1}.  
Thus the left-hand
side of \eqref{eq:whattoshow} can be written
\begin{align*}
\lambda(F((a_1,e_{i_1})/\Delta,\ldots,(a_n,e_{i_n})/\Delta)) &=
\lambda((F(\tup a),F(\tup e))/\Delta)\\
& = \lambda((d(F(\tup a), F(\tup e),e_\ell))/\Delta)\\
& = (\lambda_\ell(d(F(\tup a),F(\tup e),e_\ell)),e_\ell)/\Delta.
\end{align*}
The right-hand side of \eqref{eq:whattoshow} can be written
\begin{align*}
F(\lambda((a_1,e_{i_1})/\Delta),\ldots,\lambda((a_n,e_{i_n})/\Delta))
&= F(
(\lambda_{i_1}(a_1),e_{i_1})/\Delta,\ldots,
(\lambda_{i_n}(a_n),e_{i_n})/\Delta)\\
&= (F(\lambda_{i_1}(a_1),\ldots,\lambda_{i_n}(a_n)),F(\tup e))/\Delta\\
&= (d(F(\lambda_{i-1}(a_1),\ldots,\lambda_{i_n}(a_n)),F(\tup e),e_\ell),e_\ell)
/\Delta.
\end{align*}
Thus to prove \eqref{eq:whattoshow}, it will suffice to show
\begin{equation} \label{eq:whattoshow2}
\lambda_\ell(d(F(\tup a),F(\tup e),e_\ell)) = 
d(F(\lambda_{i_1}(a_1),\ldots,
\lambda_{i_n}(a_n)),F(\tup e),e_\ell).
\end{equation}

By Claim 2, there exist $r_t \in R_{\ell i_t}$ for $t \in [n]$ such that
for all $(x_1,\ldots,x_n)\in \thcl_{i_1}\times \cdots \times \thcl_{i_n}$,
$F(\tup x) = \sum_{t=1}^n r_t(x_t) + F(\tup e)$.
Recall that each $\lambda_j$ and $r_t$
is a group 
homomorphism between the appropriate groups from
$\Grp a(\theta,e_{i_1}),\ldots,\Grp a(\theta,e_{i_n}),\Grp a(\theta,e_\ell)$.
Thus computing in $\Grp a(\theta,e_\ell)$,
\begin{align*}
\lambda_\ell(d(F(\tup a),F(\tup e),e_\ell)) &= \lambda_\ell(F(\tup d)) - 
\lambda_\ell(F(\tup e))\\
&= \lambda_\ell\left(\sum_{t=1}^n r_t(a_t) + F(\tup e)\right) - \lambda_\ell(F(\tup e))\\
&= \sum_{t=1}^n \lambda_\ell(r_t(a_t)) &(\dagger)
\end{align*}
while
\begin{align*}
d(F(\lambda_{i_1}(a_1),\ldots,\lambda_{i_n}(a_n)),F(\tup e),e_\ell) &=
\left(\sum_{t=1}^n r_t(\lambda_{i_t}(a_t)) + F(\tup e)\right) - F(\tup e)\\
&= \sum_{t=1}^n r_t(\lambda_{i_t}(a_t). & (\ddagger)
\end{align*}
Since $\lambda_\ell\circ r_t=r_t\circ \lambda_{i_t}$ for each $t \in [n]$ by
assumption \eqref{newF:it2}, we have established \eqref{eq:whattoshow2} and
so have finished the proof of Proposition~\ref{prp:new=F}.
\end{proof}

Finally, we show that $\bbF_\theta$ coincides with the finite field $\bbF$ 
constructed in the proof of Theorem~\ref{thm:finite-vecspace} when $\m a$ is 
finite.

\begin{appendixoneprp} \label{prp:new=old}
Suppose $\calV$ is a variety with a weak difference term $d$, $\m a$ is a finite
algebra in $\calV$,
and $\theta$ is an abelian minimal congruence of $\m a$.  
Let $\bbF$ be the finite field associated to $\theta$ in the proof 
of Theorem~\ref{thm:finite-vecspace}.  That is, let $\thcl_1,\ldots,
\thcl_k$ be a list of the $\theta$-classes, 
let $\barthcl:=\thcl_1\times \cdots \times\thcl_k$,
choose $\tup e=(e_1,\ldots,e_k)
\in \barthcl$, and define $+$ on $\barthcl$ by $x+y:=d(x,\tup e,y)$
so that $\m c:=(\barthcl,+,\tup e)$ is an abelian group.
Let $\mathfrak{C}(\m a,\nu_\theta)$ be defined as in the
proof of Theorem~\ref{thm:finite-vecspace}, let $\m r$ be the subring
of
$\End(\m c)$ such that $\mathfrak{C}(\m a,\nu_\theta)$ is polynomially
equivalent to  $\RGrp\m c$, and let 
$\bbF=\End(\RGrp\m c)$.  Then $\bbF\cong \bbF_\theta$.
\end{appendixoneprp}

\begin{proof}[Proof sketch]
First recall the construction of Freese's division ring $\bbD$ associated
to $\theta$ and the transversal $T:= \{e_1,\ldots,e_k\}$
described at the beginning of this section.  By finiteness,
the abelian group $V$ in Freese's construction coincides with the group $\m c$.
Also recall the unital subring $\sfL_{\calR}$ of $\bEnd(\m c)$ 
from the construction
of $\bbD$.  If we can prove $\sfL_{\calR}=\m r$, then it will follow
that $\bbD=\bbF$ and therefore $\bbF\cong \bbF_\theta$ 
by Proposition~\ref{prp:new=F}.

Following \cite{gumm-maltsev}, the universe of $\m r$ may be given by
\[
R = \set{f(x)\in \Pol_1(\mathfrak{C}(\m a,\nu_\theta))}{$f(\tup e)=\tup e$}.
\] 
By \cite[Remark 2.8]{mayr-szendrei}, the unary polynomials of 
$\mathfrak{C}(\m a,\nu_\theta)$ are the functions $f:C\ra C$ which can
be described as follows: for some $k$-ary polynomials
$f_1,\ldots,f_k \in \Pol_k(\m a)$ with $f_i(\tup e) \in \thcl_i$ for $i\in [k]$,
\[
f(\tup c) = (f_1(\tup c),\ldots,f_k(\tup c))\quad\mbox{for all $\tup c \in C$}.
\]
Assuming $f \in R$, i.e., $f(\tup e)=\tup e$, we get
$f_i(\tup e)=e_i$ for each $i\in [k]$.  Thus
for each $i \in [k]$ there exist $r_1,\ldots,r_k \in R_{ij}$ (see Freese's 
construction) so that for all $\tup c=(c_1,\ldots,c_k) \in C$,
\[
f_i(\tup c) = \sum_{j=1}^k r_{ij}(c_j) \quad\mbox{computed in
$\Grp a(\theta,e_i)$},
\]
by Proposition~\ref{prp:affine}.  That is, $f = \sfL_M$ where
$M=(r_{ij})_{i,j\in [k]} \in \calR$ (assuming $f \in R$).  The logic
is reversible and we have $R=\sfL_{\calR}$ as desired.
\end{proof}

\phantomsection
\hypertarget{A2}{}

\section*{Appendix 2: Examples}

In this appendix we illustrate some of our results from Sections~\ref{sec:structure} and~\ref{sec:divring} by presenting
a recipe for constructing certain finite subdirectly irreducible algebras with
abelian monoliths in varieties having weak difference terms.  The algebras
$\m a$ can be constructed 
so that if $\mu$ is the monolith, $\alpha=(0:\mu)$, and
$\Dmon=\baralpha/\Delta_{\mu,\alpha}$,
then
\begin{enumerate}
\item
$\bbF_\mu$ can be any desired finite field $\bbF$;
\item
$|A/\alpha|$ can be any positive integer.
\item
The vector spaces $\FGrpd{D(\m a)}(\varphi,0_\alcl)$ as $\alcl$ ranges over
the $\alpha$-classes can have any desired (finite) dimensions.
\item
The ranges of the 
$\mu$-classes within a given $\alpha$-class $\alcl$ can be any 
collection of subspaces of 
$\FGrpd{D(\m a)}(\Dmon,0_\alcl)$, as long as the collection includes
$0_\alcl/\Dmon$ itself.
\end{enumerate}

Our recipe is a variant of 
a known construction \cite{bulatov-SMB}.
We first give a useful method for constructing varieties with a prescribed
weak difference term.
For this, we need the following result characterizing weak 
difference terms at the level of varieties.
Following \cite{kearnes-szendrei}, given a variety $\calV$, let $\m f=\m f_\calV(3)$
be the $\calV$-free algebra freely generated by $\{x,y,z\}$, let
$\alpha = \cg(x,z)$, $\beta=\cg(x,y)$ and $\gamma = \cg(y,z)$ calculated in $\m f$,
and define $\beta_0=\beta$, $\gamma_0=\gamma$, $\beta_{n+1} = \beta\vee(\alpha\wedge
\gamma_n)$, and $\gamma_{n+1}=\gamma\vee(\alpha\wedge \beta_n)$ for $n\geq 0$.

\begin{appendixtwothm}
\label{thm:maltsev-cond}
There exists an infinite sequence $\Sigma_0,\Sigma_1,\Sigma_2,\ldots$ of finite sets of
identities involving variables $x,y$ and ternary
operation symbols $\sfd,\sff_1,\sff_2,\ldots$, such that for every variety $\calV$
and every ternary term $d(x,y,z)$ in the signature of $\calV$,
the following are equivalent:
\begin{enumerate}
\item \label{maltsev-cond:it1}
$d$ is a weak difference term for $\calV$.
\item \label{maltsev-cond:it2}
For some $n\geq 0$ 
there exist idempotent ternary terms $f_1,f_2,\ldots$ in the signature of $\calV$ so 
that, when the symbols $\sfd,\sff_1,\sff_2,\ldots$ are interpreted by 
$d,f_1,f_2,\ldots$ respectively, the identities in 
$\Sigma_n$ are true in $\calV$.
\item \label{maltsev-cond:it3}
Letting $\m f=\m f_\calV(3)$ and interpreting $d$ as an element of $\m f$ in the
usual way, i.e.\ as $d^{\m f}(x,y,z)$, we have $(x,d) \in \gamma_m$ and $(d,z) \in \beta_m$ for some $m\geq 0$.
\end{enumerate}
\end{appendixtwothm}

\stepcounter{appendixtwocounter}

\begin{proof}[Proof sketch]
\eqref{maltsev-cond:it1}\Implies\eqref{maltsev-cond:it3} can be extracted from the proof
of Theorem 4.8 (1)\Implies(2)\Implies(3) in \cite{kearnes-szendrei}.
Standard arguments with roots in \cite{pixley,wille}
can produce sets $\Sigma_0,\Sigma_1,\Sigma_2,\ldots$ of identities witnessing 
\eqref{maltsev-cond:it2}\Iff\eqref{maltsev-cond:it3}.
To prove \eqref{maltsev-cond:it3}\Implies\eqref{maltsev-cond:it1}, 
assume that $(x,d)\in \gamma_m$ and $(d,z)\in \beta_m$.  Then by \cite[Theorem 2.1 
(3)\Implies(4)]{wires}, there exist idempotent ternary terms $f_1,\ldots,f_n,g_1,\ldots,g_n$
(for some $n$) such that $\calV\models f_i(x,y,x)\approx g_i(x,y,x)$ for each
$i \in [n]$, and for all $\m a\in \calV$ and all $a,b \in A$, if
\[
\bigwedge_{i=1}^n \left(\rule{0in}{.2in} f_i(a,a,b)=g_i(a,a,b)\leftrightarrow f_i(a,b,b)=g_i(a,b,b)\right),
\]
then $d(a,a,b)=b=d(b,a,a)$ and $d(b,b,a)=a=d(a,b,b)$.  Now one can easily adapt the proof
of \cite[Theorem 1.2 (3)\Implies(1)]{diffterm} to get that $d$ is a weak difference term 
for $\calV$.
\end{proof}

\begin{appendixtwodf} \label{df:semiover}
A \emph{semilattice-over-Maltsev} 
operation is any operation $d:A^3\ra A$
constructed in the following way.
Let $(S,\wedge)$ be a (meet) semilattice.  For each $s \in S$ let $V_s$ be a \nonempty\ set,
and assume that $V_s\cap V_t=\varnothing$ for all $s,t \in S$ with $s\ne t$.
For each pair $s,t \in S$ with $s\geq t$, assign a function $f_{(s,t)}:V_s\ra V_t$,
so that $f_{(s,s)}=\mathrm{id}_{V_s}$.
Finally, for each $s \in S$ let $m_s:(V_s)^3\ra V_s$ be a Maltsev operation.

From this data, define $A = \bigcup_{s \in S}V_s$ 
and $\nu:A\ra S$ so that $a \in V_{\nu(a)}$ for all $a \in A$.
Then define $d:A^3\ra A$ by
\[
d(a,b,c) = m_t(f_{(\nu(a),t)}(a),f_{(\nu(b),t)}(b),f_{(\nu(c),t)}(c))\quad\mbox{where
$t=\nu(a)\wedge \nu(b) \wedge \nu(c)$}.
\]
\end{appendixtwodf}
\stepcounter{appendixtwocounter}

\begin{appendixtwolm} \label{lm:variety}
Suppose $\m a$ is an algebra
and $d$ is a ternary term operation of $\m a$
which is semilattice-over-Maltsev.
Then $d$ is a weak difference term for $\HSP(\m a)$.
\end{appendixtwolm}
\stepcounter{appendixtwocounter}

\begin{proof}
Clearly $d$ is idempotent, and it can be easily checked that $\m a$ satisfies the
following identities:
\begin{align}
d(d(x,y,y),x,x) &\approx d(x,y,y) \approx d(d(x,y,y),y,y) \label{variety:eq}\\
d(x,x,d(x,x,y)) &\approx d(x,x,y) \approx d(y,y,d(x,x,y)). \notag
\end{align}
Define ternary terms $f_i,g_i$ for $i=0,1,2,3$ by
\begin{align*}
f_0(x,y,z) &= x   & g_0(x,y,z) &= d(x,z,z)\\
f_1(x,y,z) &= d(x,y,y) & g_1(x,y,z) &= d(d(x,y,y),z,z)\\
f_2(x,y,z) &= z   & g_2(x,y,z) &= d(x,x,z)\\
f_3(x,y,z) &= d(y,y,z) & g_3(x,y,z) &= d(x,x,d(y,y,z)).
\end{align*}
From idempotency of $d$ and the identities~\eqref{variety:eq}, one can deduce that
$\m a$ satisfies the identities
\begin{equation} \label{variety:eq3}
f_i(x,y,x) \approx g_i(x,y,x)\quad\mbox{for all $i$}
\end{equation}
and
\begin{align} 
x &\approx f_0(x,y,y) & f_2(x,x,y) &\approx y \label{variety:eq2}\\
f_1(x,x,y) &\approx f_0(x,x,y) & f_2(x,y,y) &\approx f_3(x,y,y)\notag\\
f_1(x,y,y) &\approx g_1(x,y,y) & g_3(x,x,y) &\approx f_3(x,x,y) \notag\\
g_0(x,x,y) &\approx g_1(x,x,y) & g_3(x,y,y) &\approx g_2(x,y,y)\notag \\
g_0(x,y,y) &\approx d(x,y,y) & d(x,x,y) &\approx g_2(x,x,y). \notag
\end{align}
The identities \eqref{variety:eq3} and \eqref{variety:eq2}
therefore hold in $\HSP(\m a)$.
Let $\m f=\m f_{\HSP(\m a)}(3)$ be the free algebra in $\HSP(\m a)$ freely generated
by $\{x,y,z\}$.
As in the discussion preceding Theorem~\ref{thm:maltsev-cond},
let $\alpha=\cg(x,z)$, $\beta=\cg(x,y)$ and $\gamma = \cg(y,z)$
be calculated in $\m f$.  Interpreting $d$ and $f_0,\ldots,f_3,g_0,\ldots,g_3$ as 
elements of $\m f$ in the usual way, we find that the identities \eqref{variety:eq3}
and \eqref{variety:eq2} insure that $f_0,\ldots,f_3,g_0,\ldots,g_3$ witness
\begin{align*}
(x,d) &\in \gamma \circ(\alpha\cap (\beta \circ (\alpha\cap \gamma)\circ\beta)) \circ
\gamma 
\subseteq \gamma \vee (\alpha \wedge (\beta \vee (\alpha\wedge \gamma))) = \gamma_2\\
(d,z) &\in \beta \circ(\alpha\cap (\gamma \circ (\alpha\cap \beta)\circ\gamma)) \circ
\beta
\subseteq \beta \vee (\alpha \wedge (\gamma \vee (\alpha\wedge \beta))) = \beta_2.
\end{align*}
Thus $d$ is a weak difference term for $\HSP(\m a)$ by Theorem~\ref{thm:maltsev-cond}.
\end{proof}

Here is our recipe for constructing interesting finite subdirectly irreducible algebras
with abelian monoliths.

\begin{appendixtwoexmp} \label{exmp1}
Fix a finite field $\bbF$.  Let $\Vn 00,\Vn 01,\ldots,
\Vn 0m$ be finite-dimensional vector
spaces over $\bbF$ with $m \geq 0$ and
$\dim(\Vn 00) >0$, chosen so that
$\Vn 00,\ldots,\Vn 0m$ are pairwise disjoint.
For each $\ell\leq m$ let $\Wn 0\ell,\Wn 1\ell,\ldots,\Wn {n_\ell}\ell$
be a list of (not necessarily distinct) subspaces of $\Vn 0\ell$ with 
$\Wn 0\ell = \Vn 0\ell$.

Choose vector spaces $\Vn i\ell$ over $\bbF$
and isomorphisms $\sigman i\ell:\Vn i\ell \cong \Wn i\ell$ for 
$\ell \leq m$ and $1\leq i\leq n_\ell$, so that
the spaces $\Vn i\ell$ ($0\leq \ell\leq m$, $0\leq i\leq n_\ell$) are pairwise disjoint.
Also set $\sigman 0\ell = \mathrm{id}_{\Vn 0\ell}$ for each $\ell\leq m$.  
Let $\Cn \ell$ be the (disjoint) union of $\Vn 0\ell,\Vn 1\ell,\ldots,\Vn {n_\ell}\ell$, and
let $A$ be the (disjoint) union of $\Cn 0,\ldots,\Cn m$.
Then let $\mu$ and $\alpha$ be the equivalence relations on $A$ whose equivalence classes
are the spaces $\Vn i\ell$ ($\ell\leq m$, $i\leq n_\ell$) and the sets
$\Cn 0,\ldots,\Cn m$ respectively.  
Also let $B=\Vn 00\cup\cdots\cup\Vn 0m$ and let 
$\psi$ be the equivalence relation on $B$ whose equivalence classes are
$\Vn 00,\ldots,\Vn 0m$.
We will build  an algebra $\m a$ with universe $A$ such that:
\begin{itemize}
\item
$\m a$ belongs to a variety with a weak difference term.
\item
$\m a$ is subdirectly irreducible with abelian monolith $\mu$.  In fact,
$\mu$ will be the unique minimal reflexive subuniverse of $\m a^2$ properly
containing $0_A$.
\item
$\alpha = (0:\mu)$.
\item
Letting $\Delta = \Delta_{\mu,\alpha}$ and $\Dmon = \baralpha/\Delta$,
there is a bijection from $D(\m a,\mu)$ to $B$ 
which sends
$\Dmon$ to $\psi$ and sends each 
$\Dmon$-class $(\mu\cap(\Cn \ell)^2)/\Delta$ to $\Vn 0\ell$.
\item
Modulo this bijection, for each $\alpha$-class $\Cn \ell$, the ranges 
(see Definition~\ref{df:Brange})
of
the $\mu$-classes $\Vn i\ell$ contained
in $\Cn \ell$ correspond precisely to the subspaces $\Wn i\ell$ in $\Vn 0\ell$.
\item
The division ring $\bbF_\mu$ associated to $\mu$ is $\bbF$ (up to isomorphism).
\end{itemize}

Begin by letting $S = \set{(\ell,i)}{$\ell\leq m$ and $i\leq n_\ell$}$.
Thus $S$ is the set of indices for the vector spaces $\Vn i\ell$ which constitute $A$.
Define a binary operation $\wedge$ on $S$ by
\[
(\ell,i) \wedge (k,j) = \left\{\begin{array}{cl}
(\ell,i) & \mbox{if $(\ell,i)=(k,j)$}\\
(\ell,0) & \mbox{if $k=\ell$ but $i\ne j$}\\
(0,0) & \mbox{if $k\ne \ell$.}
\end{array}\right.
\]
Then $(S,\wedge)$ is the meet semilattice with least element $(0,0)$
pictured in Figure~\ref{fig:semilattice}.

\begin{figure} 
\begin{tikzpicture}

\draw [fill] (0,0) circle (2.0pt);
\draw [fill] (-1.5,1) circle (2.0pt);
\draw [fill] (0,1) circle (2.0pt);
\node at (-.75,1) {$\dots$};
\draw [fill] (2,1) circle (2.0pt);
\draw [fill] (6,1) circle (2.0pt);
\node at (4,1) {$\dots$};
\node at (2,2) {$\dots$};
\node at (6,2) {$\dots$};
\draw [fill] (1.25,2) circle (2.0pt);
\draw [fill] (2.75,2) circle (2.0pt);
\draw [fill] (5.25,2) circle (2.0pt);
\draw [fill] (6.75,2) circle (2.0pt);

\draw [-] (-1.5,1) -- (0,0) -- (0,1);
\draw [-] (2,1) -- (0,0) -- (6,1);
\draw [-] (1.25,2) -- (2,1) -- (2.75,2);
\draw [-] (5.25,2) -- (6,1) -- (6.75,2);

\node [anchor=north] at (0,0) {{\scriptsize $(0,0)$}};
\node [anchor=north] at (6,1) {{\scriptsize $(m,0)$}};
\node [anchor=north] at (2.2,1) {{\scriptsize $(1,0)$}};
\node [anchor=south] at (-1.5,1) {{\scriptsize $(0,1)$}};
\node [anchor=south] at (0,1) {{\scriptsize $(0,n_0)$}};

\node [anchor=south] at (1.25,2) {{\scriptsize $(1,1)$}};
\node [anchor=south] at (2.75,2) {{\scriptsize $(1,n_1)$}};
\node [anchor=south] at (5.25,2) {{\scriptsize $(m,1)$}};
\node [anchor=south] at (6.75,2) {{\scriptsize $(m,n_m)$}};

\end{tikzpicture}
\caption{The semilattice $(S,\wedge)$} \label{fig:semilattice}
\end{figure}

Recall that for each $s=(\ell,i) \in S$ we have a corresponding vector space $V_s$
over $\bbF$.  
To each pair of elements $s,t\in S$ with $s\geq t$
in the semilattice ordering,
we associate an $\bbF$-linear map $f_{(s,t)}: V_s
\ra V_t$ as
follows.
\begin{enumerate}
\item
$f_{(s,s)} = \mathrm{id}_{V_s}$ for all $s \in S$.
\item
$f_{((\ell,i),(\ell,0))} = \sigman i\ell$ for $(\ell,i)\in S$ with
$i\ne 0$.
\item
$f_{((\ell,i),(0,0))}=$ the constant zero map, 
for $(\ell,i) \in S$ with $\ell\ne 0$.
\end{enumerate}

Now let $d:A^3\ra A$ be the semilattice-over-Maltsev operation 
(see Definition~\ref{df:semiover})
defined by the data
$(S,\wedge)$, $(V_s:s \in S)$, $(f_{(s,t)}:s,t \in S,\,s\geq t)$, and the family
of Maltsev operations $m_s(x,y,z)=x-y+z$ evaluated in each vector space $V_s$.
That is, 
for all $a \in V_{s_1}$, $b \in V_{s_2}$ and $c \in V_{s_3}$, letting $t=s_1\wedge s_2\wedge
s_3$ we have
\[
\mbox{$d(a,b,c) = 
f_{(s_1,t)}(a) -
f_{(s_2,t)}(b) +
f_{(s_3,t)}(c)$\quad calculated in $V_t$}.
\]
For each $\ell\leq m$ define
$\sigma_\ell:\Cn\ell\ra \Vn 0\ell$ 
so that $\sigma_\ell\restrict{\Vn i\ell} = 
\sigman i\ell$ for all $i\leq n_\ell$.
For each $s \in S$ let $\zeron s$ denote the zero element of $V_s$.
We define some unary operations on $A$.
\begin{enumerate}
\item
For each $(\ell,i)\in S$ and each
nonconstant $\bbF$-linear map $g:\Vn 0\ell\ra \Vn i\ell$,
define $\Fn i\ell g:A\ra A$ by
\[
\Fn i\ell g(a) = \left\{\begin{array}{cl}
(g\circ\sigma_\ell)(a) & \mbox{if $x \in \Cn \ell$}\\
\zeroiell 0k & \mbox{if $a \in \Cn k$ with $k\ne \ell$.}
\end{array}\right.
\]

\item
For each $\ell\in \{1,\ldots,m\}$ with $\dim(\Vn 0\ell)>0$,  choose nonconstant
$\bbF$-linear maps $g_\ell:\Vn 00\ra \Vn 0\ell$ and $h_\ell:\Vn 0\ell\ra \Vn 00$ and
define $G_\ell,G'_\ell:A\ra A$ by
\[
G_\ell(a) = 
\left\{\begin{array}{cl}
(g_\ell \circ\sigma_0)(a) & \mbox{if $a \in \Cn 0$}\\
\zeroiell 0k & \mbox{if $a \in \Cn k$ with $k\ne 0$}
\end{array}\right.
\]
and
\[
G_\ell'(a) = 
\left\{\begin{array}{cl}
(h_\ell \circ\sigma_\ell)(a) & \mbox{if $a \in \Cn \ell$}\\
\zeroiell 0k & \mbox{if $a \in \Cn k$ with $k\ne \ell$}
\end{array}\right.
\]
\end{enumerate}

We also need the following binary operations.
\begin{enumerate}
\setcounter{enumi}{2}
\item
Fix $1 \in \Vn 00$ with $1\ne \zeroiell 00$.
For every $s \in S$, define $H_s:A^2\ra A$ by
\[
H_s(a,b) = \left\{\begin{array}{cl}
1 & \mbox{if $a,b \in V_s$}\\
\zeroiell 00 & \mbox{otherwise.}\end{array}\right.
\]

\item
For each $\ell \in \{1,\ldots,m\}$, define $K_\ell:A^2\ra A$ by
\[
K_\ell(a,b) = \left\{\begin{array}{cl}
b & \mbox{if $a \in \Cn \ell$}\\
\zeroiell 0k & \mbox{if $a\not\in \Cn \ell$ and $b \in \Cn k$}.
\end{array}\right.
\]
\end{enumerate}

Let $\scrF$ be the set of all the operations $\Fn i\ell g$, let
$\scrG$ 
be the set of all the operations $G_\ell$ and $G_\ell'$,
let $\scrH = \set{H_s}{$s \in S$}$, and let 
$\scrK = \set{K_\ell}{$1\leq \ell\leq m$}$.
Finally, let
\[
\m a = (A,\{d\}\cup\scrF\cup \scrG\cup\scrH\cup\scrK).
\]
\begin{appendixtwoclm} \label{clm:examp}
\rule{.1in}{0in}
\begin{enumerate}
\item \label{ex-clm:it1}
$d$ is a weak difference term $\HSP(\m a)$.
\item \label{ex-clm:it2}
$\m a$ is subdirectly irreducible with abelian monolith $\mu$.
Moreover, $\mu$ is the unique minimal reflexive subuniverse of $\m a^2$ properly
containing $0_A$.
\item \label{ex-clm:it3}
$(0:\mu)=\alpha$.
\item \label{ex-clm:it4}
$\Delta_{\mu,\alpha}$ is the set of all pairs $((a,b),(a',b'))$ such that 
for some $\ell\leq m$ and some $i,j \leq n_\ell$,
$\{a,b\}\subseteq  \Vn i\ell$ while $\{a',b'\}\subseteq \Vn j\ell$ and
\[
\mbox{$\sigman i\ell(a)-\sigman i\ell(b) = \sigman j\ell(a')-\sigman j\ell(b')$ 
calculated in $\Vn 0\ell$.}
\]
\end{enumerate}
\end{appendixtwoclm}
\stepcounter{appendixtwocounter}

\begin{proof}
\eqref{ex-clm:it1} follows from Lemma~\ref{lm:variety}.  
It is easy to check that $\mu$ and $\alpha$ are congruences of $\m a$.  
Let $\theta \in \Con(\m a)$ satisfy $0\ne \theta \subseteq \mu$.
(We want to show $\theta=\mu$.)
We can choose $(\ell,i)\in S$ so that $\theta\cap (\Vn i\ell)^2$ contains a 
pair
$(c,c')$ with $c\ne c'$.  
Let $a=c-c'$ calculated in $\Vn i\ell$.  Then 
$a\ne \zeroiell i\ell$ and $(a,\zeroiell i\ell)\in \theta$ because of $d(x,y,z)$.
Now let $j\leq n_\ell$ and $b \in \Vn j\ell$ with $b\ne 0_{(\ell,j)}$.
Let $a' = \sigman i\ell(a)$.  Then $a' \ne \zeroiell 0\ell$.  
Hence there exists an $\bbF$-linear map
$g:\Vn 0\ell \ra V_{(n,j)}$ with $g(a')=b$.
Then $\Fn j\ell g(a)=b$ while $\Fn j\ell g(\zeroiell i\ell)=\zeroiell j\ell$,
proving $(b,\zeroiell j\ell)\in \theta$.  
As $j$ and $b$ were arbitrary, 
this proves $\mu\cap (\Cn \ell)^2\subseteq \theta$.

If $\ell>0$, we can use the operation $G_\ell'$ to show that $\theta \cap (\Vn 00)^2$
contains a nontrivial pair.  Then the argument in the
previous paragraph gives $\mu\cap (\Cn 0)^2 \subseteq \theta$.
Now let $k \in \{1,\ldots,m\}$ be such that 
some $\mu$-class $\Vn jk$ contained in $\Cn k$ has more than one element.
Then 
we can use the operation $G_k$ to show that $\theta \cap (\Vn 0k)^2$ contains
a nontrivial pair.  Again by the argument in the previous paragraph, we get $\mu\cap(\Cn k)^2
\subseteq \theta$.  This finishes the proof that $\theta=\mu$; hence 
$0_A\prec \mu$.

Next let $(c,c')$ be any nontrivial pair in $A^2$ with $(c,c')\not\in \mu$.
Say $c \in V_s$ and $c' \in V_t$ with $s\ne t$.
Then the 
polynomial $f(x) = H_s(x,c)$ sends
$(c,c')$ to $(1,\zeroiell 00)$, so
$\cg(c,c')\cap \mu\ne 0$. As we have shown
$0_A\prec \mu$, this proves $\mu$ is the monolith of $\m a$.
Combining the above argument with Corollary~\ref{cor:maltsev}, we also get
that for all $(a,b) \in \mu$ and all $(c,c') \in A^2$ with $c\ne c'$, there
exists $f \in \Pol_1(\m a)$ with $f(c)=a$ and $f(c')=b$.  This proves that
$\mu$ is the unique minimal reflexive subuniverse of $\m a^2$ properly
containing $0_A$.

Next we will show that $C(\alpha,\mu;0)$ holds.
Define $R$ to be the set of all matrices $\begin{pmatrix}a&a'\\b&b' \end{pmatrix}
\in A^{2\times 2}$ such that for some $\ell\leq m$ and some $i,j\leq n_\ell$,
\begin{itemize}
\item
$0\in \{i,j\}$ or $i=j$.
\item
$a,b \in \Vn i\ell$ and $a',b' \in \Vn j\ell$.
\item
$\sigma_\ell(a)-\sigma_\ell(b) = \sigma_\ell(a')-\sigma_\ell(b')$ calculated in $\Vn 0\ell$.
\end{itemize}
One can show that $R$ is a subuniverse of $\m a^{2\times 2}$ (exercise).  Clearly 
$X(\mu,\alpha)\subseteq R$ (see Definition~\ref{df:M}).
Conversely, suppose 
\[
\begin{pmatrix}a&a'\\b&b'\end{pmatrix} \in R\quad\mbox{with
$a,b \in \Vn i\ell$ and $a',b' \in \Vn j\ell$}.
\]
We can assume with no loss of generality that $j=0$ or $i=j$.  Then
\[
\begin{pmatrix} a&a'\\b&b'\end{pmatrix} = d\left(\begin{pmatrix}
a&a'\\a&a'\end{pmatrix},\begin{pmatrix}a&a\\a&a\end{pmatrix},
\begin{pmatrix} a&a\\b&b\end{pmatrix}\right),
\]
proving $R\subseteq \sg^{\m a^{2\times 2}}(X(\mu,\alpha))$.  These remarks prove $R=M(\mu,
\alpha)$.  Since every matrix in $R$ has the property that the first column is in
$0_A$
if and only if the second column is in $0_A$, we have proved that $C(\alpha,\mu;0)$ holds
and therefore $\mu$ is abelian and $(0:\mu)\geq \alpha$.  To prove $(0:\mu)=\alpha$,
suppose $\theta \in \Con(\m a)$ and $\theta>\alpha$.  Pick $(a,b) \in \theta\setminus
\alpha$, say with $a \in \Cn \ell$ and $b \in \Cn k$ with $k\ne \ell$.  Using the operation
$K_\ell$ and the pair $(\zeroiell 00,1) \in \mu\setminus 0_A$, we have
\[
K_\ell(b,\zeroiell 00) = \zeroiell 00 = K_\ell(b,1)
\]
but
\[
K_\ell(a,\zeroiell 00)
= \zeroiell 00 \ne 1 = K_\ell(a,1),
\]
proving that $C(\theta,\mu;0)$ fails for every $\theta>\alpha$.
Hence $(0:\mu)= \alpha$.

Recall from Lemma~\ref{lm:faces}\eqref{faces:it1} that $\Deltah(\mu,\alpha)$ is the
horizontal transitive closure of $M(\mu,\alpha)$.
Thus $\Delta_{\mu,\alpha}$ is the transitive closure of
\[
\left\{((a,b),(a',b')): \begin{pmatrix}a&a'\\b&b'\end{pmatrix} \in R\right\}.
\]
The description of $\Delta_{\mu,\alpha}$ in \eqref{ex-clm:it4} then follows.
\end{proof}

Recall that $B = \Vn 00 \cup \cdots \cup \Vn 0m$.
Let $\Delta = \Delta_{\mu,\alpha}$, so $D(\m a,\mu)=\m a(\mu)/\Delta$.
Observe that every element of $D(\m a,\mu)$ can be uniquely written in the form
$(a,\zeroiell 0\ell)/\Delta$ for some $\ell\leq m$ and $a \in \Vn 0\ell$.
Let $h:D(\m a,\mu)\ra B$ be the bijection sending $(a,\zeroiell 0\ell)/\Delta \mapsto a$.
Then by construction, for every $(\ell,i) \in S$ we have 
\begin{align*}
\ran(\Vn i\ell) &= \ran(\embed{\zeroiell i\ell}) &\mbox{Definition~\ref{df:Brange}}\\
&= \set{(a,\zeroiell i\ell)/\Delta}{$a \in \Vn i\ell$}\\
&= \set{(\sigman i\ell(a),\zeroiell 0\ell)/\Delta}{$a \in \Vn i\ell$} &
\mbox{Claim~\ref{clm:examp}\eqref{ex-clm:it4}}\\
&= \set{(a',\zeroiell 0\ell)/\Delta}{$a' \in \Wn i\ell$} &\mbox{as $\sigman i\ell(\Vn i\ell)=
\Wn i\ell$.}
\end{align*}
Hence $h(\ran(\Vn i\ell)) = \Wn i\ell$ as desired.  

Finally, let 
$T$ be the transversal for $\mu$ given by $T=\set{0_s}{$s \in S$}$,
and let $\bbD$ be the division ring associated to $\mu, T$ by Freese's
construction as described in Appendix~\hyperlink{A1}{1}.  We will
verify that $\bbF\cong \bbD$, which will imply $\bbF \cong \bbF_\mu$
by Proposition~\ref{prp:new=F}.  

As in the proof of Proposition~\ref{prp:new=F}, let $V$ be the group
$\prod_{s \in S}\Grp{a}(\mu,0_s)$,
let $\m p$ be the ring $\prod_{s \in S} \bEnd(\Grp a(\mu,0_s))$, let $\Psi:\m p\ra
\bEnd(V)$ be the obvious (coordinatewise) ring embedding, and recall that
$\bbD$ is the subring of $\bEnd(V)$ whose universe is
\[
\set{\Psi((\lambda_s)_{s \in S})}{$\lambda_s\circ f\restrict{V_t}=f\circ \lambda_t$ 
for all $f \in \Pol_1(\m a)$ with $f(0_t)=0_s$}.
\]

For each $\zeta \in \bbF$ and $s \in S$
let $\zeta_s:V_s\ra V_s$ be the 
``scalar multiplication by $\zeta$ in $V_s$" map and let $\Theta(\zeta)=
(\zeta_s)_{s \in S}$.  
Clearly $(\zeta_s)_{s \in S} \in P$.
Every $n$-ary basic operation of $\m a$, when 
restricted to an $n$-tuple of $\mu$-classes, is $\bbF$-affine, and this
property is inherited by polynomial operations.  It follows that
$(\Psi\circ\Theta)(\zeta) \in \bbD$.  It is not hard to check that
$\Psi\circ\Theta$ is a ring embedding of $\bbF$ into $\bbD$.

To prove $\ran(\Psi\circ\Theta)=\bbD$, fix $\lambda \in \bbD$ 
and choose $(\lambda_s)_{s \in S} \in P$ with $\Psi((\lambda_s)_{s \in S})=\lambda$.
Thus
\begin{equation}
\mbox{$\lambda_s\circ f\restrict{V_t} = f \circ \lambda_t$ 
for all $f \in \Pol_1(\m a)$ and $s, t \in S$ with $f(0_t)=0_s$.}
\tag{$\ast$} \label{eq:ex}
\end{equation}
Fix $s \in S$ with $\dim(V_s)>0$. 
Since every $\bbF$-linear map $V_s\ra V_s$
is a restriction to $V_s$ of some unary polynomial, 
\eqref{eq:ex} implies that $\lambda_{s}$ commutes with every $r \in
\End_\bbF(V_s)$.  Hence there exists
$\zeta^{s} \in \bbF$ such that $(\zeta^{s})_{s}=\lambda_{s}$.
We will show that $\zeta^s=\zeta^t$ for all $s,t \in S$
with $\dim(V_s),\dim(V_t)>0$, which will
imply $\lambda \in \ran(\Psi\circ\Theta)$.

Consider first the case where $s=(\ell,0)$ with $\ell\ne 0$ and $\dim(\Vn 0\ell)>0$, and $t=(0,0)$.
Recall the nonconstant $\bbF$-linear map 
$g_\ell:\Vn00 \ra \Vn0\ell$ used to define the basic operation $G_\ell$.
Then $g_\ell = G_\ell\restrict{\Vn00}$, so \eqref{eq:ex} gives
$\lambda_{s}\circ g_\ell = g_\ell \circ \lambda_{t}$.
Choose $a \in V_t$ with $b:=g_\ell(a)\ne 0_s$.  Then
\begin{align*}
\zeta^t \cdot b = \zeta^t\cdot g_\ell(a) &= g_\ell(\zeta^t\cdot a) &\mbox{$\bbF$-linearity of $g_\ell$}\\
&= g_\ell(\lambda_{t}(a))  \\
&= \lambda_{s}(g_\ell(a)) &\mbox{by \eqref{eq:ex}}\\
&= \zeta^s\cdot b.
\end{align*}
Since $b\ne 0_s$, we get $\zeta^s=\zeta^t$.
A similar argument using operations in $\scrF$ gives
$\zeta^{(\ell,i)}=\zeta^{(\ell,0)}$ whenever $\ell\leq m$ and 
$\dim(\Vn i\ell)>0$.
This completes the proof that $\bbF_\mu\cong \bbF$.
\end{appendixtwoexmp}
\stepcounter{appendixtwocounter}

%\nocite{*}  %% Forces BibTeX to list all references, even those not cited
\bibliography{similar-bib}
\end{document}